\newtheorem{thm}{Theorem}[section]
\newtheorem{df}{Definition}[section]
\numberwithin{equation}{section}
\newtheorem{rmk}{Remark}[section]
\newtheorem{prop}{Proposition}[section]
\newtheorem{lm}{Lemma}[section]
\date{}
\begin{document}
	\title[Multi-objective control for stochastic parabolic equations]{Multi-objective control for stochastic parabolic equations with dynamic boundary conditions}
	\author{Omar Oukdach\,$^1$, Said Boulite\,$^2$,  Abdellatif Elgrou\,$^3$ and  Lahcen Maniar\,$^3$}
	\begin{abstract}
		This paper deals with a  hierarchical multi-objective  control problem for forward stochastic parabolic equations with dynamic boundary conditions. The controls are divided into two classes:  leaders and  followers. The goal of the leaders  is of null controllability type while the followers are in charge of  letting the state close to prescribed targets in  fixed observation regions. To solve the problem,  Nash and Stackelberg strategies are used.  To implement these strategies, we combine some appropriate Carleman estimates and the well-known control duality approach. 
	\end{abstract}
	\keywords{Stochastic parabolic equations, Stacklberg-Nash strategy, null controllability, Carleman estimates, observability inequality, dynamic boundary conditions.}
	
	\dedicatory{\large Dedicated to the memory of Professor Hammadi Bouslous}
	\maketitle
	
	\footnotetext[1]{\author{Moulay Ismaïl University of Meknes, FST Errachidia,  MSISI Laboratory, AM2CSI Group, BP
			509, Boutalamine, Errachidia, Morocco}. E-mail: \href{omar.oukdach@gmail.com}{\texttt{omar.oukdach@gmail.com}}}
	\footnotetext[2]{Cadi Ayyad University, National School of Applied Sciences, LMDP, UMMISCO (IRD-UPMC), Marrakesh P.B. 575, Morocco. E-mail: \href{s.boulite@uca.ma}{\texttt{s.boulite@uca.ma}}}
	\footnotetext[3]{Cadi Ayyad University, Faculty of Sciences Semlalia, LMDP, UMMISCO (IRD-UPMC), P.B. 2390, Marrakesh, Morocco. E-mail: \href{abdellatif.elgrou@ced.uca.ma}{\texttt{abdellatif.elgrou@ced.uca.ma}}, \href{maniar@uca.ma}{\texttt{maniar@uca.ma}}}

	\section{Introduction}
	A multi-objective optimal control problem is an optimal control problem that involve more than one objective function. In general, the objectives are in conflict, and the optimality of some objectives usually do not lead to the optimality of others, and it is impossible to find a strategy satisfying all the objectives. Then, some trade-offs and compromise are needed to achieve a certain balance of objectives. Many applications from industry, economy and engineering lead to multi-objective optimal controls. For example, in the heat transfer, in a room, it is meaningful to try to guide the temperature to be close as much as possible to a fixed target at the end of the day and, additionally, keep the temperature not too far from a prescribed value at some regions. This can be done by applying several controls at different locations of the room. This problem can be seen as a game with controls as players.
	
	In contrast with the single-objective case, we can give several different definitions of optimal solution depending on the characteristics of the considered problem. In general, it is not possible to find a single solution that would be optimal for all the objectives simultaneously. The concept of strategies or equilibriums are then used as solution notions. The well-known strategies in the literature include the Pareto strategy, the Nash strategy and the Stackelberg strategy, see respectively  \cite{ Na51, Pa96, St34} and references therein.
	
	The aim of this paper is to apply the Stackelberg and Nash strategies for  stochastic parabolic systems with dynamic boundary conditions. Let us describe briefly the above strategies in an abstract setting. Let  $H_1$ and $H_2$  be two Hilbert spaces, and $J_{i}:  H_1\times H_2\longrightarrow \mathbb{R},$ $i=1, 2$,  be  two cost functions. Let us start by describing the Stackelberg's strategy. Suppose we have a hierarchical game with two players: the leader and the follower, with associated cost functions $J_1$ and $J_2$, respectively. The Stackelberg solution to the game is achieved when the follower is forced to wait until the leader announces his policy, before making his own decision. More precisely, we have the following definition, see for instance \cite{StDf}.
	\begin{df}\label{defst}
		A vector $(v_1^{\star}, v_2^{\star})\in H_1\times H_2$  is a Stackelberg strategy  for $J_1$ and $J_2$   if 
		\begin{enumerate}
			\item[(i)]  There is a function $F: H_1\longrightarrow H_2$ such that
			$$J_2(v_1,F(v_1))= \inf\limits_{v\in H_2}J_2(v_1,v), \qquad \forall v_1\in H_1.$$
			\item [(ii)]The control $v^{\star}_1\in H_1$ satisfies 
			$$J_1(v^{\star}_1,F(v^{\star}_1))= \inf\limits_{v\in H_1}J_1(v,F(v)).$$
			\item [(iii)]The pair $(v_1^{\star}, v_2^{\star})$ satisfies $v_2^{\star}= F(v^{\star}_1).$
		\end{enumerate}
	\end{df}
	\noindent Now, we define the Nash strategy. Roughly speaking, it consists of a combination of strategies such that no player improves his gain by changing his strategy, while the other players do not change their strategy. Mathematically, we have the following definition. 
	\begin{df}
		A vector $v^{\star}=(v^{\star}_1, v^{\star}_2)\in H_1\times H_2$ is   a Nash equilibrium for $(J_1, J_2)$ if 
		\begin{equation*}
			\left\{\begin{array}{ll}
				J_1(v^{\star}_1, v^{\star}_2)= \inf\limits_{v\in H_1} J_1(v, v^{\star}_2), \\
				J_2(v^{\star}_1, v^{\star}_2)= \inf\limits_{v\in H_2} J_2(v^{\star}_1, v).
			\end{array}\right.
		\end{equation*}
	\end{df}
	\noindent
	By the well-known results of convex analysis and optimization, we have the following characterization of the Nash equilibrium.
	\begin{rmk}
		Assume that $J_i:  H_1\times H_2 \longrightarrow \mathbb{R}$, $i=1, 2$, are differentiable and convex.  
		A  vector  $(v^{\star}_1,v^{\star}_2)\in H_1\times H_2$ is   a Nash equilibrium for $(J_1, J_2)$ if and only if 
		\begin{equation*}
			\dfrac{\partial J_i}{\partial v_i}(v^{\star}_1,v^{\star}_2)=0, \quad \text{for all} \;\,\,i=1, 2.
		\end{equation*}	
	\end{rmk}
	To present the Nash-Stackelberg equilibrium, suppose that there is a player 3 with a cost function $J: H\times H_1\times H_2 \longrightarrow \mathbb{R}$, where $H$ is a third Hilbert space. We assume that player 3 is the leader, and players 1 and 2 with cost functions $J_1$ and $J_2$, respectively, are the followers. Here, $J_1$ and $J_2$ are defined on $H\times H_1\times H_2$. The Stackelberg-Nash equilibrium is defined as follows.
	\begin{df}
		A vector $(v^{\star}, v_1^{\star}, v_2^{\star})\in H\times H_1\times H_2$ is a Stackelberg-Nash equilibrium  for $(J,J_1,J_2)$ with player 3 as  leader and   players 1 and 2 as followers  if 
		\begin{itemize}
			\item[(i)] There exists two functions  $F_1:  H\longrightarrow H_1  $ and $F_2: H\longrightarrow H_2$ such that
			\begin{align*}
				\begin{cases}
					J_1(v, F_1v,F_2v)&= \inf\limits_{w\in H_1}J_1( v,w,F_2v), \qquad \forall v\in H,\\
					J_2(v, F_1v,F_2v)&= \inf\limits_{w\in H_2}J_2( v,F_1v,w), \qquad \forall v\in H.
				\end{cases}
			\end{align*}
			\item [(ii)]  $v^{\star}\in H$ satisfies 
			$$J(v^{\star},F_1(v^{\star}),F_2(v^{\star} ))= \inf\limits_{v\in H}J(v,F_1v,F_2v).$$
			\item [(iii)]  $(v^{\star}, v^{\star}_1,v^{\star}_2)$ satisfies $v^{\star}_1=F_1(v^{\star})$ and $v^{\star}_2=F_2(v^{\star}).$
		\end{itemize} 
	\end{df}
	
	In the last years, these strategies have been extensively analyzed in the context of deterministic partial differential equations (PDEs for short). We refer to \cite{ArFeGu17, ArCaSa15, Calsavara, GRP02, LiPa, HSP18} for some works on Stackelberg-Nash controllability problems. In these papers, the authors consider the classical boundary conditions: Dirichlet, Neumann, or Robin. For PDEs with dynamic boundary conditions, the literature is also rich. See, for instance, \cite{ACMO20, OuBoMa21, BCMO20, BFurIman, KMO22, KhMa19, Lionsopt71, maniar2017null, zabczyk2020mathematical} for some classical control problems and \cite{BoMaOuNash,BoMaOuNash2} for some control issues using concepts from game theory. In the stochastic case, there are many works on controllability for stochastic partial differential equations, see, e.g., \cite{barbu2003carleman, fadili, liu2014global, lu2011some, lu2021mathematical, san23, tang2009null, yan2018carleman, observineqback, StNash3}. For recent works concerning controllability of stochastic parabolic equations with dynamic boundary conditions, we refer to \cite{Preprintelgrou23, elgrouDBC, BackSPEwithDBC, elgmanforwione}. For some works on stochastic Stackelberg-Nash controllability issues, see, for instance, \cite{StNash4, StNash3, StNash2} and references therein. See also \cite{StNash}, where the authors consider stochastic degenerate parabolic equations.
	
	In this paper, we combine techniques from control theory and game theory to address stochastic parabolic equations with dynamic boundary conditions. Specifically, we utilize a hierarchy of distributed controls as follows: with $v_1$ and $v_2$ (the followers), we aim to maintain the state close to fixed targets within a specified observation region; subsequently, using three controls $u_1$, $u_2$, and $u_3$ (the leaders), we aim to precisely drive the state to rest. The resolution strategy can be summarized as follows: (i) we establish the existence and uniqueness of the Nash equilibrium; (ii) we characterize the Nash equilibrium through an appropriate backward system; (iii) by a Carleman estimate for a suitable forward-backward coupled system, we prove the controllability result as announced.
	
	It is worth noting that the introduction of three leaders is a purely technical constraint specific to forward stochastic parabolic systems. We will see that the extra controls $u_2$ and $u_3$ are essential for attaining our desired controllability result. While considering only one leader $u_1$ would be more realistic, currently, it presents a difficult and challenging problem. Further explanations on this matter can be found in the discussion at the end of the paper.\\
	
	The plan for the rest of the paper is as follows: In Section \ref{section2}, we formulate the problem under consideration. Section \ref{section3} presents the well-posedness of the equations. Section \ref{section4} deals with the existence, uniqueness, and characterization of the Nash equilibrium. In Section \ref{section5}, we prove the needed Carleman estimates. Section \ref{section6} is devoted to establishing our controllability result. We conclude the paper with some discussions in Section \ref{section7}.
	\section{Problem formulation and the main result}\label{section2}
	Let $T>0$, $G\subset\mathbb{R}^N$ (with $N\geq2$) be a nonempty open bounded domain with a smooth boundary $\Gamma=\partial G$, and let the sets $G_0$, $G_1$, $G_2$, $G_{1,d}$, and $G_{2,d}$ be nonempty open subsets of $G$. We also indicate by $\chi_{G_i}$ ($i=0,1,2$) the characteristic function of $G_i$. Put
	$$Q=(0,T)\times G, \,\,\quad \Sigma=(0,T)\times\Gamma,\,\,\quad  \text{and}\,\, \quad Q_0=(0,T)\times G_0.$$
	
	Let $(\Omega,\mathcal{F},\{\mathcal{F}_t\}_{t\in[0,T]},\mathbb{P})$ be a fixed complete filtered probability space on which a one-dimensional standard Brownian motion $W(\cdot)$ is defined such that $\{\mathcal{F}_t\}_{t\in[0,T]}$ is the natural filtration generated by $W(\cdot)$ and augmented by all the $\mathbb{P}$-null sets in $\mathcal{F}$. For a Banach space $\mathcal{X}$, we denote by $C([0,T];\mathcal{X})$ the Banach space of all $\mathcal{X}$-valued continuous functions defined on $[0,T]$; and $L^2_{\mathcal{F}_t}(\Omega;\mathcal{X})$ denotes the Banach space of all $\mathcal{X}$-valued $\mathcal{F}_t$-measurable random variables $X$ such that $\mathbb{E}\big(\vert X\vert_\mathcal{X}^2\big)<\infty$, with the canonical norm; and $L^2_\mathcal{F}(0,T;\mathcal{X})$ indicates the Banach space consisting of all $\mathcal{X}$-valued $\{\mathcal{F}_t\}_{t\in[0,T]}$-adapted processes $X(\cdot)$ such that $\mathbb{E}\big(\vert X(\cdot)\vert^2_{L^2(0,T;\mathcal{X})}\big)<\infty$, with the canonical norm; and $L^\infty_\mathcal{F}(0,T;\mathcal{X})$ is the Banach space consisting of all $\mathcal{X}$-valued $\{\mathcal{F}_t\}_{t\in[0,T]}$-adapted essentially bounded processes, with its norm $|\cdot|_\infty$; and $L^2_\mathcal{F}(\Omega;C([0,T];\mathcal{X}))$ defines the Banach space consisting of all $\mathcal{X}$-valued $\{\mathcal{F}_t\}_{t\in[0,T]}$-adapted continuous processes $X(\cdot)$ such that $\mathbb{E}\big(\vert X(\cdot)\vert^2_{C([0,T];\mathcal{X})}\big)<\infty$, with the canonical norm. We now introduce the following space
	$$\mathbb{L}^2:=L^2(G,dx)\times L^2(\Gamma,d\sigma),$$
	which $dx$ denoted the Lebesgue measure in $G$ and $d\sigma$ indicates the surface measure on $\Gamma$. Equipped with the following standard inner product 
	$$\langle(y,y_\Gamma),(z,z_\Gamma)\rangle_{\mathbb{L}^2} = \langle  y,z\rangle_{L^2(G)} + \langle y_\Gamma,z_\Gamma\rangle_{L^2(\Gamma)},$$
	the space $\mathbb{L}^2$ is a Hilbert space. Moreover, we adopt the following notations
	$$\mathcal{U}=L^2_\mathcal{F}(0,T;L^2(G_0))\times L^2_\mathcal{F}(0,T;L^2(G))\times L^2_\mathcal{F}(0,T;L^2(\Gamma)),$$
	and
	$$\mathcal{V}_i=L^2_\mathcal{F}(0,T;L^2(G_i)),\quad \mathcal{V}_{i,d}=L^2_\mathcal{F}(0,T;L^2(G_{i,d})),\qquad\textnormal{for}\;\;\;i=1,2.$$
	
	The main purpose of this paper is to study the Stackelberg-Nash null controllability for the following stochastic parabolic equation with dynamic boundary conditions
	\begin{equation}\label{eqq1.1}
		\begin{cases}
			\begin{array}{ll}
				dy - \Delta y \,dt = [a_1y+\chi_{G_0}(x)u_1+\chi_{G_1}(x)v_1+\chi_{G_2}(x)v_2] \,dt + (a_2y +u_2)\,dW(t)&\textnormal{in}\,\,Q,\\
				dy_\Gamma-\Delta_\Gamma y_\Gamma \,dt+\partial_\nu y \,dt = b_1y_\Gamma \,dt+(b_2y_\Gamma  +\,u_3) \,dW(t) &\textnormal{on}\,\,\Sigma,\\
				y_\Gamma(t,x)=y\vert_\Gamma(t,x) &\textnormal{on}\,\,\Sigma,\\
				(y,y_\Gamma)\vert_{t=0}=(y_0,y_{\Gamma,0}) &\textnormal{in}\,\,G\times\Gamma,
			\end{array}
		\end{cases}
	\end{equation}
	where $a_1, a_2\in L_\mathcal{F}^\infty(0,T;L^\infty(G))$, $b_1, b_2\in L_\mathcal{F}^\infty(0,T;L^\infty(\Gamma))$, $(y_0,y_{\Gamma,0})\in L^2_{\mathcal{F}_0}(\Omega;\mathbb{L}^2)$ is the initial state and $(y,y_\Gamma)$ is the state variable. The leaders are constituted of
	$(u_1,u_2,u_3) \in \mathcal{U}$ and $(v_1,v_2)\in \mathcal{V}_1\times\mathcal{V}_2$ is the pair of the followers. Throughout this paper, we use the letter $C$ for a positive constant that may change from one place to another. In equation \eqref{eqq1.1}, $y\vert_\Gamma$ denotes the trace of the function $y$, and $\partial_\nu y = (\nabla y \cdot \nu)\vert_{\Sigma}$ is the normal derivative of $y$, where $\nu$ is the outer unit normal vector at the boundary $\Gamma$. This normal derivative plays the role of the coupling term between the bulk and surface equations. For more details and the physical interpretation of dynamical boundary conditions, also called Wentzell boundary conditions, we refer to \cite{cocang2008, Gal15, Gol06}. System \eqref{eqq1.1} (with $u_1\equiv u_2\equiv u_3\equiv v_1\equiv v_2\equiv 0$) describes various diffusion phenomena, such as thermal processes. These systems, subject to stochastic disturbances, also account for small independent changes during the heat process. Here, stochastic dynamic boundary equations take into account the dynamic interaction between the domain and the boundary. For further details on the physical model described by such systems, see, for instance, \cite{Pbrrune, chuesBj}. Let us now define some useful differential operators on $\Gamma$:
	\begin{itemize}
		\item The tangential gradient of a function $y_\Gamma:\Gamma\rightarrow\mathbb{R}$  is defined by
		$$
		\nabla_{\Gamma} y_\Gamma=\nabla y- \partial_{\nu}y\,\nu,
		$$
		where $y$ is an extension of $y_\Gamma$ up to an open neighborhood of $\Gamma$. On the other hand, one can see that $\nabla_{\Gamma} y_\Gamma(x)$ is the projection of $\nabla y(x)$ onto the tangent space $$T_{x}\Gamma=\{p\in\mathbb{R}^N,\;\; p\,\cdot\,\nu(x)=0\},$$
		at the point $x\in\Gamma$.
		\item The tangential divergence of a function $y_\Gamma:\Gamma\rightarrow\mathbb{R}^N$ such that $y=y_\Gamma$ on $\Gamma$ is defined as
		$$
		\operatorname{div}_{\Gamma} (y_\Gamma)=\operatorname{div}(y)-\nabla y\,\nu\cdot\nu, 
		$$
		where $\nabla y$ is the standard Jacobian matrix of $y$. Then, we define the Laplace-Beltrami operator of a function $y_\Gamma:\Gamma\rightarrow\mathbb{R}$ as follows
		$$
		\Delta_{\Gamma} y_\Gamma= \operatorname{div}_{\Gamma}(\nabla_{\Gamma}y_\Gamma).
		$$
	\end{itemize}
	
	As the standard Sobolev spaces $H^1(G)$ and $H^2(G)$, we also define the following Sobolev spaces $H^1(\Gamma)$ and $H^2(\Gamma)$ on $\Gamma$ by 
	$$H^1(\Gamma)=\Big\{y_\Gamma\in L^2(\Gamma),\;\;\nabla_\Gamma y_\Gamma\in L^2(\Gamma;\mathbb{R}^N)\Big\},
	$$
	and
	$$H^2(\Gamma)=\Big\{y_\Gamma\in L^2(\Gamma),\;\;\nabla_\Gamma y_\Gamma\in H^1(\Gamma;\mathbb{R}^N)\Big\},$$
	endowed with the following norms respectively
	$$ |y_\Gamma|_{H^1(\Gamma)}= \langle y_\Gamma,y_\Gamma \rangle^{\frac{1}{2}}_{H^1(\Gamma)}, \quad\text{with} \quad\,\,\langle y_\Gamma,z_\Gamma \rangle_{H^1(\Gamma)}= \int_{\Gamma}y_\Gamma z_\Gamma \,d\sigma + \int_{\Gamma}\nabla_{\Gamma} y_\Gamma\cdot \nabla_{\Gamma}z_\Gamma \,d\sigma, \quad $$
	and 
	$$ |y_\Gamma|_{H^2(\Gamma)}= \langle y_\Gamma,y_\Gamma \rangle^{\frac{1}{2}}_{H^2(\Gamma)},\quad \text{with}\quad\,\, \langle y_\Gamma,z_\Gamma \rangle_{H^2(\Gamma)}= \int_{\Gamma}y_\Gamma z_\Gamma \,d\sigma +\int_{\Gamma}\Delta_{\Gamma} y_\Gamma\, \Delta_{\Gamma}z_\Gamma \,d\sigma.$$              
	In the subsequent sections, for the Laplace-Belrami operator $\Delta_\Gamma$, we commonly use the following surface divergence formula
	$$
	\int_\Gamma \triangle_\Gamma y_\Gamma\, z_\Gamma\, \,d\sigma = -\int_\Gamma \nabla_\Gamma y_\Gamma\cdot\nabla_\Gamma z_\Gamma\, \,d\sigma\,,\qquad y_\Gamma\in H^2(\Gamma),\;\; z_\Gamma\in H^1(\Gamma).
	$$   
	We also define the following Hilbert spaces
	$$\mathbb{H}^k=\Big\{(y,y_\Gamma)\in H^k(G)\times H^k(\Gamma): \,\,\; y_\Gamma=y|_\Gamma\Big\},\quad k=1,2,$$
	viewed as a subspace of $H^k(G)\times H^k(\Gamma)$ with the natural topology inherited from $H^k(G)\times H^k(\Gamma)$.\\
	
	This paper deals with Stackelberg-Nash strategies for equation \eqref{eqq1.1}. To be more specific, let $y_{i,d}\in\mathcal{V}_{i,d}$ ($i=1,2$) be two fixed target functions. We define the main cost-functional as follows
	\begin{equation*}
		J(u_1,u_2,u_3)=\displaystyle\frac{1}{2}\mathbb{E}\int_{Q} \left(|\chi_{G_0}u_1|^2+|u_2|^2\right) \,dx\, dt+\displaystyle\frac{1}{2}\mathbb{E}\int_{\Sigma} |u_3|^2 \,d\sigma\, dt,
	\end{equation*}
	and the secondary cost-functionals
	\begin{equation}\label{Ji.12}
		J_i(u_1,u_2,u_3,v_1,v_2)=\displaystyle\frac{\alpha_i}{2}\mathbb{E}\int_{ (0,T)\times G_{i,d}} |y-y_{i,d}|^2 \,dx\, dt + \displaystyle\frac{\beta_i}{2}\mathbb{E}\int_{(0,T)\times G_i} |v_i|^2 \,dx\, dt,\quad i=1,2,
	\end{equation}
	where $\alpha_i$, $\beta_i$  are positive constants and $Y(u_1,u_2,u_3,v_1,v_2)=(y(u_1,u_2,u_3,v_1,v_2),y_{\Gamma}(u_1,u_2,u_3,v_1,v_2))$ is the solution of \eqref{eqq1.1}. For a fixed $(u_1,u_2,u_3)\in \mathcal{U}$, the pair $(v^{\star}_1,v^{\star}_2)\in\mathcal{V}_1\times\mathcal{V}_2$ is called a Nash equilibrium for $(J_1,J_2)$ associated to $(u_1,u_2,u_3)$  if 
	\begin{equation*}
		\left\{\begin{array}{ll}
			{J_1(u_1,u_2,u_3,  v^{\star}_1,v^{\star}_2)= \min\limits_{v\in  \mathcal{V}_1}J_1(u_1,u_2,u_3, v,v^{\star}_2),} \\
			{J_2(u_1,u_2,u_3, v^{\star}_1,v^{\star}_2)= \min\limits_{v\in  \mathcal{V}_2}J_2(u_1,u_2,u_3,  v^{\star}_1,v).} 
		\end{array}\right.
	\end{equation*}
	Since the functionals $J_1$ and  $J_2$  are differentiable and  convex, then the pair $(v^{\star}_1,v^{\star}_2)$ is a Nash equilibrium for $(J_1,J_2)$   if and only if
	\begin{equation}\label{NE7}
		\left\{\begin{array}{ll}
			{\displaystyle\frac{\partial J_1}{\partial v_1}(u_1,u_2,u_3,v^{\star}_1,v^{\star}_2)(v)=0, \qquad \forall \, v\in  \mathcal{V}_1,} \\\\
			{\displaystyle\frac{\partial J_2}{\partial v_2}(u_1,u_2,u_3,v^{\star}_1,v^{\star}_2)(v)=0, \qquad \forall \, v\in  \mathcal{V}_2.} 
		\end{array}\right.
	\end{equation}
	
	The goal is to prove that for any initial data  $Y_0=(y_0, y_{\Gamma,0})\in L^2_{\mathcal{F}_0}(\Omega;\mathbb{L}^2)$, there exist controls  $(u_1,u_2,u_3)\in  \mathcal{U}$ minimizing the functional $J$ and an associated Nash equilibrium   $(v^{\star}_1,v^{\star}_2)=(v^{\star}_1(u_1,u_2,u_3),v^{\star}_2(u_1,u_2,u_3))\in\mathcal{V}_1\times\mathcal{V}_2$  such that the associated solution $Y=Y(Y_0,u_1,u_2,u_3, v^{\star}_1,v^{\star}_2)$ of equation \eqref{eqq1.1} satisfies that
	$$Y(T,\cdot)=(0,0)\;\;\textnormal{in}\;\;G\times\Gamma,\quad\mathbb{P}\textnormal{-a.s.}$$ 
	To do this, we shall follow the Stackelberg-Nash strategy: For each choice of the leaders $(u_1,u_2,u_3)$, we look for a Nash equilibrium pair for the cost functionals $J_i$  ($i=1,2$), which means finding the controls $v^{\star}_1(u_1,u_2,u_3)\in \mathcal{V}_1$ and $v^{\star}_2(u_1,u_2,u_3)\in \mathcal{V}_2$, depending on $(u_1,u_2,u_3)$ and satisfying \eqref{NE7}. Once the Nash equilibrium has been identified and fixed for each $(u_1,u_2,u_3)$, we then look for controls $(\widehat{u}_1,\widehat{u}_2,\widehat{u}_3)$ such that
	\begin{equation*}\label{eq1.6}
		J(\widehat{u}_1,\widehat{u}_2,\widehat{u}_3,  v^{\star}_1,v^{\star}_2)= \min_{(u_1,u_2,u_3)\in \mathcal{U}} J(u_1,u_2,u_3, v^{\star}_1(u_1,u_2,u_3), v^{\star}_2(u_1,u_2,u_3)),
	\end{equation*}
	subject to  the null controllability constraint
	\begin{equation*}\label{eq1.7}
		Y(Y_0,\widehat{u}_1,\widehat{u}_2,\widehat{u}_3, v^{\star}_1(\widehat{u}_1,\widehat{u}_2,\widehat{u}_3), v^{\star}_2(\widehat{u}_1,\widehat{u}_2,\widehat{u}_3))(T)=(0,0)\;\;\textnormal{in}\;\;G\times\Gamma,\quad\mathbb{P}\textnormal{-a.s.}
	\end{equation*} 
	As we shall see, the whole problem can be  reduced to one of controllability of a coupled forward-backward stochastic system, which is proved by appropriate Carleman estimates.\\
	
	The main result of this paper is stated as follows: Assume that the control regions satisfy the following assumption:
	\begin{equation}\label{Assump10}
		G_{d}=G_{1,d}=G_{2,d}, \quad \text{and }\quad G_d\cap G_0\neq\emptyset.
	\end{equation}
	\begin{thm}\label{th4.1SN}
		Let us assume that \eqref{Assump10} holds and $\beta_i>0$, $i=1,2$,  are sufficiently large. Then, there exists a positive weight function $\rho =\rho(t)$ blowing up at $t=T$ such that for all the target functions $y_{i,d}\in \mathcal{V}_{i,d}$ satisfying 
		\begin{equation}\label{inqAss11SN}
			\mathbb{E}\int_{(0, T)\times G_{i, d}}\rho^2 |y_{i,d}|^2 \,dx\,dt < \infty,\qquad i=1,2,
		\end{equation}
		and for every initial condition $(y_0,y_{\Gamma, 0} )\in L^2_{\mathcal{F}_0}(\Omega;\mathbb{L}^2)$, there exist controls $(\widehat{u}_1,\widehat{u}_2,\widehat{u}_3)\in \mathcal{U}$  minimizing the functional  $J$ and an associated Nash equilibrium $(v^{\star}_1,v^{\star}_2)\in\mathcal{V}_1\times\mathcal{V}_2$ such that the corresponding solution $(y,y_\Gamma)$ of  \eqref{eqq1.1} satisfies that
		$$(y(T,\cdot),y_\Gamma(T,\cdot))=(0,0)\;\;\textnormal{in}\;\;G\times\Gamma,\quad\mathbb{P}\textnormal{-a.s.}$$
	\end{thm}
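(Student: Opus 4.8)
\textit{Proof strategy.} The plan is to carry out the Stackelberg--Nash programme in three steps. First, for each fixed leaders $(u_1,u_2,u_3)\in\mathcal U$ I will solve the Nash game between the two followers and characterise the equilibrium by an optimality (forward--backward) system. Second, I will recast the Stackelberg null controllability as null controllability of that optimality system with the leaders as controls, and prove it through an observability inequality obtained from Carleman estimates. Third, I will recover the minimisation of $J$ by a penalisation argument.

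\textit{Step 1: the Nash equilibrium and its characterisation.} Fix $(u_1,u_2,u_3)$. Since \eqref{eqq1.1} is linear, the map $(v_1,v_2)\mapsto Y(u_1,u_2,u_3,v_1,v_2)$ is affine, so each $J_i$ in \eqref{Ji.12} is a strictly convex quadratic functional of $(v_1,v_2)$, and the system \eqref{NE7} is equivalent to a linear equation $\mathcal A(v^\star_1,v^\star_2)=\ell(u_1,u_2,u_3)$ in $\mathcal V_1\times\mathcal V_2$, where $\mathcal A=\mathrm{diag}(\beta_1\mathrm{Id},\beta_2\mathrm{Id})+\mathcal K$ and $\mathcal K$ is a fixed bounded operator coming from the observation terms $\alpha_i|y-y_{i,d}|^2$. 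For $\beta_1,\beta_2$ large, $\mathcal A$ is coercive, hence invertible by Lax--Milgram, which yields existence, uniqueness, and Lipschitz dependence of $(v^\star_1,v^\star_2)$ on $(u_1,u_2,u_3)$. To characterise the equilibrium I introduce, for $i=1,2$, the adjoint state $(p^i,p^i_\Gamma,P^i,P^i_\Gamma)$ solving the backward stochastic parabolic equation with dynamic boundary conditions
\begin{equation*}
	\begin{cases}
		dp^i=-\big(\Delta p^i+a_1p^i+a_2P^i+\alpha_i\chi_{G_d}(y-y_{i,d})\big)\,dt+P^i\,dW(t)&\textnormal{in}\;Q,\\
		dp^i_\Gamma=-\big(\Delta_\Gamma p^i_\Gamma-\partial_\nu p^i+b_1p^i_\Gamma+b_2P^i_\Gamma\big)\,dt+P^i_\Gamma\,dW(t)&\textnormal{on}\;\Sigma,\\
		p^i_\Gamma=p^i\vert_\Gamma&\textnormal{on}\;\Sigma,\\
		(p^i,p^i_\Gamma)\vert_{t=T}=(0,0)&\textnormal{in}\;G\times\Gamma,
	\end{cases}
\end{equation*}
and then an It\^o duality between $p^i$ and the G\^ateaux derivative of $Y$ with respect to $v_i$ turns \eqref{NE7} into the feedback laws $v^\star_i=-\beta_i^{-1}\chi_{G_i}p^i$, $i=1,2$. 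Inserting these into \eqref{eqq1.1} produces the optimality system: the forward equation for $(y,y_\Gamma)$ with additional source $-\sum_{i=1}^2\beta_i^{-1}\chi_{G_i}p^i$, coupled with the two backward equations above, in which $(u_1,u_2,u_3)$ are the only free inputs. The problem has thus been reduced to: find $(u_1,u_2,u_3)\in\mathcal U$ driving this system to $(y,y_\Gamma)(T)=(0,0)$ while minimising $J$.

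\textit{Step 2: Carleman estimate and observability.} By the standard control--duality argument, the above null controllability together with minimality of $J$ is equivalent to an observability inequality for the adjoint of the optimality system, which is again a forward--backward system: a backward equation for $(z,z_\Gamma,Z,Z_\Gamma)$, dual to the forward $y$-equation, carrying the final data and source $\sum_{i=1}^2\alpha_i\chi_{G_d}\zeta^i$, and, for $i=1,2$, forward equations for $(\zeta^i,\zeta^i_\Gamma)$, dual to the backward $p^i$-equations, with zero initial data and source $-\beta_i^{-1}\chi_{G_i}z$. The inequality to be shown is essentially
\begin{equation*}
	\mathbb E\big\vert(z,z_\Gamma)(0)\big\vert_{\mathbb L^2}^2\le C\Big(\mathbb E\int_{Q_0}\vert z\vert^2\,dx\,dt+\mathbb E\int_Q\vert Z\vert^2\,dx\,dt+\mathbb E\int_\Sigma\vert Z_\Gamma\vert^2\,d\sigma\,dt\Big),
\end{equation*}
the three observed terms being exactly what the three leaders $u_1$ (drift in $G_0$), $u_2$ (bulk diffusion) and $u_3$ (boundary diffusion) can act on; this is why three leaders are needed, since a backward stochastic Carleman estimate inevitably leaves $Z$ and $Z_\Gamma$ on the observed side. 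To obtain it I will apply the global Carleman estimate for backward stochastic parabolic equations with dynamic boundary conditions to $(z,z_\Gamma)$ and the one for forward such equations to each $(\zeta^i,\zeta^i_\Gamma)$ (both established in Section~\ref{section5}). The backward estimate generates cross terms $\sum_i\alpha_i^2\,\mathbb E\int_{(0,T)\times G_d}\theta^2\vert\zeta^i\vert^2$ with $\theta$ the Carleman weight, which are absorbed using the forward estimates; those in turn generate $\beta_i^{-2}\,\mathbb E\int_Q\theta^2\vert\chi_{G_i}z\vert^2$, absorbed into the left-hand side of the backward estimate once $\beta_i$ is large. Crucially, the local integral of $\zeta^i$ produced by the forward estimate may be placed in a set $\omega\subset\subset G_0\cap G_d$, which is nonempty thanks to \eqref{Assump10}; then, using the $z$-equation on $\omega$ (where $\chi_{G_d}\equiv1$) together with a cut-off and an integration by parts, this local term is bounded by $\delta$ times the Carleman integral of the $\zeta^i$ (absorbed) plus $C_\delta\,\mathbb E\int_{Q_0}\theta^2\vert z\vert^2$, i.e.\ by the leader-$u_1$ observation. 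This gives the Carleman estimate for the coupled system, whence the observability inequality by the usual combination with energy estimates.

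\textit{Step 3: conclusion.} With the observability inequality available, a standard penalisation scheme applies: for $\varepsilon>0$, minimise $J_\varepsilon(u_1,u_2,u_3)=J(u_1,u_2,u_3)+\tfrac1{2\varepsilon}\mathbb E\vert Y(T)\vert_{\mathbb L^2}^2$ over $\mathcal U$ (with $Y$ the state of the optimality system), write the Euler--Lagrange system of the unique minimiser in feedback form through the adjoint state, and use the observability inequality to bound the minimiser and $\varepsilon^{-1}\mathbb E\vert Y(T)\vert^2$ uniformly in $\varepsilon$. In these bounds the forcing created by the targets is handled by Cauchy--Schwarz, $\mathbb E\int_{(0,T)\times G_d}\alpha_iy_{i,d}\zeta^i\le C\big(\mathbb E\int_{(0,T)\times G_d}\rho^2\vert y_{i,d}\vert^2\big)^{1/2}\big(\mathbb E\int_{(0,T)\times G_d}\rho^{-2}\vert\zeta^i\vert^2\big)^{1/2}$, the last factor being finite by the Carleman estimate once $\rho$ is chosen comparable to the reciprocal of the Carleman weight (which indeed blows up at $t=T$); this is the exact role of \eqref{inqAss11SN}. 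Passing to the limit $\varepsilon\to0$ produces leaders $(\widehat u_1,\widehat u_2,\widehat u_3)\in\mathcal U$ for which the corresponding solution of \eqref{eqq1.1} satisfies $(y,y_\Gamma)(T)=(0,0)$ $\mathbb P$-a.s.; finally, since the set of leaders steering the optimality system to rest is a closed affine subset of $\mathcal U$ on which $J$ is strictly convex and coercive, it has a unique minimiser, which together with its (unique) associated Nash equilibrium is the required solution.

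\textit{Main obstacle.} The hard part is Step 2, namely the combined Carleman estimate for the forward--backward coupled system with dynamic boundary conditions. Beyond the two-weight bookkeeping, the genuine difficulties are: (i) handling the Wentzell coupling $\partial_\nu$ between the bulk and surface equations in both the forward and the backward Carleman estimates and controlling the surface terms they generate; (ii) absorbing the cross terms between $z$ and the $\zeta^i$, which forces $\beta_1,\beta_2$ large and depends on the geometric hypothesis $G_d\cap G_0\neq\emptyset$; and (iii) the integration-by-parts step that trades the local $\zeta^i$-term for the $z$-observation on $Q_0$, which is delicate because it must avoid a loss of derivatives and because two followers --- hence two variables $\zeta^i$ --- couple into the single $z$-equation.
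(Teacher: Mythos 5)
Your overall route coincides with the paper's: Lax--Milgram for the Nash equilibrium, characterization through the backward adjoint states and the feedback $v_i^\star=-\beta_i^{-1}\chi_{G_i}z^i$, reduction to null controllability of the optimality system \eqref{eqq4.7}, a Carleman-based observability inequality for its adjoint \eqref{ADJSO1}, and a duality limit (the paper minimizes a dual functional $J_\varepsilon$ over the terminal data $(\phi_T,\phi_{\Gamma,T})$ with an $\varepsilon\vert\cdot\vert$ term rather than penalizing $\vert Y(T)\vert^2$ in the primal problem, but these are interchangeable; your closed-affine-set argument for the minimality of $J$ is fine and in fact more explicit than the paper's).

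The one step you leave open --- item (iii) of your ``main obstacle'' --- is precisely where your plan, as written, does not close, and the paper resolves it with a device you do not supply. If you apply the forward Carleman estimate to each $\psi^i$ ($=\zeta^i$ in your notation) separately, you are left with two local terms $\lambda^3\mathbb{E}\int_{(0,T)\times G'}\theta^2\varphi^3\vert\psi^i\vert^2$; but the $\phi$-equation only carries the source $\alpha_1\psi^1\chi_{G_{1,d}}+\alpha_2\psi^2\chi_{G_{2,d}}$, so the It\^o-product/cut-off argument against $\phi$ can only recover the combination, never each $\psi^i$ individually. The paper's fix is to set $h=\alpha_1\psi^1+\alpha_2\psi^2$: because $G_{1,d}=G_{2,d}=G_d$ (the real content of \eqref{Assump10} beyond $G_d\cap G_0\neq\emptyset$), $(h,h_\Gamma)$ satisfies a single forward equation with source $(\alpha_1\beta_1^{-1}\chi_{G_1}+\alpha_2\beta_2^{-1}\chi_{G_2})\phi$, and $h\chi_{G_d}$ is exactly the coupling term in the $\phi$-equation; one then runs the forward Carleman estimate and the local absorption (via $d(\lambda^3\zeta\theta^2\varphi^3 h\phi)$) for $h$ alone, which is what Lemma \ref{thmm5.1} states. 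The individual weighted integrals $\mathbb{E}\int_Q\rho^{-2}\vert\psi^i\vert^2$ that you need to pair against the targets $y_{i,d}$ in your Step 3 are then \emph{not} obtained from the Carleman estimate at all: they follow from a direct It\^o/Gronwall energy estimate on each $\psi^i$-equation (using $\psi^i(0)=0$), which bounds them by $\mathbb{E}\int_Q\rho^{-2}\phi^2$, and this last quantity is controlled by the improved Carleman estimate \eqref{improvedCarl} with the modified weight $\overline\theta$ that stays bounded below near $t=0$. Without this two-tier argument (Carleman for the lump $h$, Gronwall for the individual $\psi^i$) the observability inequality \eqref{observaineq} --- whose left-hand side must contain each $\rho^{-2}\vert\psi^i\vert^2$ separately, not just your $\mathbb{E}\vert(z,z_\Gamma)(0)\vert^2_{\mathbb L^2}$ --- is out of reach.
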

	\section{Well-posedness of equations}\label{section3}
	In this section, we provide the well-posedness results of the forward and backward stochastic parabolic equations with dynamic boundary conditions. We refer to \cite{da2014stochastic, krylov, lu2021mathematical, pardoux1990adapted, X. Zhou} for the well-posedness and regularity of solutions of stochastic evolution equations and backward stochastic evolution equations. Let us introduce the following linear operator
	$$\mathcal{A}=\begin{pmatrix} 
		\Delta & 0 \\
		-\partial_{\nu} & \Delta_\Gamma
	\end{pmatrix},\quad\textnormal{with domain}\quad\mathcal{D}(\mathcal{A})=\mathbb{H}^2.$$
	We have the following generation result. For the proof, see \cite{maniar2017null}.
	\begin{prop}\label{prop2.1}
		The operator $\mathcal{A}$ is a densely defined, self-adjoint, dissipative and generates an analytic $C_0$-semigroup $(e^{t\mathcal{A}})_{t\geq0}$ on $\mathbb{L}^2$.
	\end{prop}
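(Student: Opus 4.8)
The plan is to realize $\mathcal{A}$ as the negative of the self-adjoint operator associated with a symmetric, nonnegative, closed sesquilinear form, and then to invoke elliptic regularity for the dynamic (Wentzell) boundary problem to identify the operator domain with $\mathbb{H}^2$. First I would introduce the form
$$\mathfrak{a}\big((y,y_\Gamma),(z,z_\Gamma)\big) = \int_G \nabla y\cdot\nabla z\,dx + \int_\Gamma \nabla_\Gamma y_\Gamma\cdot\nabla_\Gamma z_\Gamma\,d\sigma,$$
with form domain $\mathbb{H}^1$, which is dense in $\mathbb{L}^2$. This form is plainly symmetric and nonnegative; it is continuous since $|\mathfrak{a}((y,y_\Gamma),(z,z_\Gamma))|\leq \|(y,y_\Gamma)\|_{\mathbb{H}^1}\|(z,z_\Gamma)\|_{\mathbb{H}^1}$ by Cauchy--Schwarz; and it is closed because the form norm $\big(\mathfrak{a}(\cdot,\cdot)+\|\cdot\|_{\mathbb{L}^2}^2\big)^{1/2}$ is equivalent to the graph norm of $\mathbb{H}^1$, which is complete. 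By the first representation theorem of Kato, there is then a unique self-adjoint nonnegative operator $B$ on $\mathbb{L}^2$ with $D(B)\subset\mathbb{H}^1$ associated with $\mathfrak{a}$, characterized by: $(y,y_\Gamma)\in D(B)$ with $B(y,y_\Gamma)=(f,f_\Gamma)$ if and only if $\mathfrak{a}((y,y_\Gamma),(z,z_\Gamma))=\langle(f,f_\Gamma),(z,z_\Gamma)\rangle_{\mathbb{L}^2}$ for all $(z,z_\Gamma)\in\mathbb{H}^1$.

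Next I would verify that $\mathcal{A}=-B$ on $\mathbb{H}^2$. For $(y,y_\Gamma)\in\mathbb{H}^2$ and $(z,z_\Gamma)\in\mathbb{H}^1$, Green's formula in $G$ gives $\int_G \Delta y\,z\,dx = -\int_G \nabla y\cdot\nabla z\,dx + \int_\Gamma \partial_\nu y\,z_\Gamma\,d\sigma$, while the surface divergence formula stated above yields $\int_\Gamma \Delta_\Gamma y_\Gamma\,z_\Gamma\,d\sigma = -\int_\Gamma \nabla_\Gamma y_\Gamma\cdot\nabla_\Gamma z_\Gamma\,d\sigma$. Adding these and observing that the boundary term $+\int_\Gamma \partial_\nu y\,z_\Gamma\,d\sigma$ produced by Green's formula cancels exactly against the term $-\int_\Gamma \partial_\nu y\,z_\Gamma\,d\sigma$ coming from the coupling entry $-\partial_\nu y$, I obtain $\langle \mathcal{A}(y,y_\Gamma),(z,z_\Gamma)\rangle_{\mathbb{L}^2} = -\mathfrak{a}((y,y_\Gamma),(z,z_\Gamma))$. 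Taking $(z,z_\Gamma)=(y,y_\Gamma)$ gives $\langle \mathcal{A}(y,y_\Gamma),(y,y_\Gamma)\rangle_{\mathbb{L}^2} = -\|\nabla y\|_{L^2(G)}^2 - \|\nabla_\Gamma y_\Gamma\|_{L^2(\Gamma)}^2 \leq 0$, which is dissipativity, and the symmetry of $\mathfrak{a}$ immediately shows that $\mathcal{A}$ is symmetric.

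To upgrade symmetry to self-adjointness I would establish $D(B)=\mathbb{H}^2$. The inclusion $\mathbb{H}^2\subset D(B)$ follows from the identity just derived. The reverse inclusion is the crux and is precisely elliptic regularity for the Wentzell problem: if $(y,y_\Gamma)\in\mathbb{H}^1$ satisfies $\mathfrak{a}((y,y_\Gamma),\cdot)=\langle(f,f_\Gamma),\cdot\rangle_{\mathbb{L}^2}$ with $(f,f_\Gamma)\in\mathbb{L}^2$, then one must show $y\in H^2(G)$ and $y_\Gamma\in H^2(\Gamma)$, the weak formulation then encoding $\Delta y=f$ in $G$ together with the boundary relation $-\partial_\nu y+\Delta_\Gamma y_\Gamma=f_\Gamma$ on $\Gamma$. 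This regularity statement is exactly the content of \cite{maniar2017null}, which I would cite; granting it, $B=-\mathcal{A}$ is self-adjoint with domain $\mathbb{H}^2$, and $\mathbb{H}^2$ is dense in $\mathbb{L}^2$, so $\mathcal{A}$ is densely defined. Finally, since $\mathcal{A}=-B$ is self-adjoint and bounded above (indeed nonpositive), the spectral theorem shows it generates an analytic $C_0$-semigroup of angle $\pi/2$ on $\mathbb{L}^2$. The main obstacle is the domain identification $D(B)=\mathbb{H}^2$, as the boundary regularity for this coupled Wentzell system does not follow from the abstract form theory and must be supplied separately.
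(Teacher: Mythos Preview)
Your proposal is correct and is precisely the standard form-method argument; note, however, that the paper does not supply a proof of this proposition at all but simply refers the reader to \cite{maniar2017null}. Your sketch is therefore a faithful and more detailed expansion of what that citation provides, including the identification $D(B)=\mathbb{H}^2$ via elliptic regularity for the Wentzell problem, which is indeed the only nontrivial step.
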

	Let us start by discussing the well-posedness of the following  forward stochastic parabolic equation
	\begin{equation}\label{eqqwf1}
		\begin{cases}
			\begin{array}{ll}
				dy - \Delta y \,dt = (a_1y+f_1) \,dt + (a_2y +f_2)\,dW(t)&\textnormal{in}\,\,Q,\\
				dy_\Gamma-\Delta_\Gamma y_\Gamma \,dt+\partial_\nu y \,dt = (b_1y_\Gamma+g_1) \,dt+(b_2y_\Gamma + g_2) \,dW(t) &\textnormal{on}\,\,\Sigma,\\
				y_\Gamma(t,x)=y\vert_\Gamma(t,x) &\textnormal{on}\,\,\Sigma,\\
				(y,y_\Gamma)\vert_{t=0}=(y_0,y_{\Gamma,0}) &\textnormal{in}\,\,G\times\Gamma,
			\end{array}
		\end{cases}
	\end{equation}
	where $(y_0,y_{\Gamma,0})\in L^2_{\mathcal{F}_0}(\Omega;\mathbb{L}^2)$ is the initial state, $(y,y_\Gamma)$ is the state variable, $a_1, a_2\in L_\mathcal{F}^\infty(0,T;L^\infty(G))$, $b_1, b_2\in L_\mathcal{F}^\infty(0,T;L^\infty(\Gamma))$, $f_1,f_2\in L^2_\mathcal{F}(0,T;L^2(G))$ and $g_1,g_2\in L^2_\mathcal{F}(0,T;L^2(\Gamma))$.\\
	Notice that the system \eqref{eqqwf1} can be rewritten in the following abstract stochastic Cauchy problem
	\begin{equation}\label{abs1.1}
		\begin{cases}
			dY=(\mathcal{A}Y+B_1(t)Y + F_1(t)) \,dt + (B_2(t)+F_2(t))Y \,dW(t)\,,\\
			Y(0)=Y_0\in L^2_{\mathcal{F}_0}(\Omega;\mathbb{L}^2),
	\end{cases}\end{equation}
	where 
	$$Y=
	\begin{pmatrix}
		y\\
		y_\Gamma
	\end{pmatrix},\; Y_0=
	\begin{pmatrix}
		y_0\\
		y_{\Gamma,0}
	\end{pmatrix},\; B_1=
	\begin{pmatrix}
		a_1&0\\
		0&b_1
	\end{pmatrix},\; B_2=
	\begin{pmatrix}
		a_2&0\\
		0&b_2
	\end{pmatrix},\; F_1=
	\begin{pmatrix}
		f_1\\
		g_1
	\end{pmatrix},\; F_2=
	\begin{pmatrix}
		f_2\\
		g_2
	\end{pmatrix}.
	$$
	Now, using Proposition \ref{prop2.1} and \cite[Theorem 3.24]{lu2021mathematical}, it is easy to deduce the following well-posedness result of equation \eqref{eqqwf1}.
	\begin{thm}\label{wellposedness1}
		For each  $(y_0,y_{\Gamma,0})\in L^2_{\mathcal{F}_0}(\Omega;\mathbb{L}^2)$, $f_1,f_2\in L^2_{\mathcal{F}}(0,T;L^2(G))$, and $g_1,g_2\in L^2_{\mathcal{F}}(0,T;L^2(\Gamma))$, equation \eqref{abs1.1} $($hence, \eqref{eqqwf1}$)$ admits a unique mild solution
		$$Y=(y,y_\Gamma)\in L^2_\mathcal{F}(\Omega;C([0,T];\mathbb{L}^2))\bigcap L^2_\mathcal{F}(0,T;\mathbb{H}^1),$$
		such that for all $t\in[0,T]$
		$$Y(t)=e^{t\mathcal{A}}Y_0+\int_0^t e^{(t-s)\mathcal{A}}\left[B_1(s)Y(s)+F_1(s)\right]\,ds+\int_0^t e^{(t-s)\mathcal{A}} \left[B_2(s)+F_2(s)\right] \,dW(s),\quad\mathbb{P}\textnormal{-a.s.} $$
		Moreover, there exists a constant $C>0$ depending only on $G$, $T$, $a_1, \,a_2, \,b_1$, and $b_2$ so that
		\begin{align}\label{forwenergyes}
			\begin{aligned}
				&\,\vert(y,y_\Gamma)\vert_{L^2_\mathcal{F}(\Omega;C([0,T];\mathbb{L}^2))} + \vert(y,y_\Gamma)\vert_{L^2_\mathcal{F}(0,T;\mathbb{H}^1)}\\
				&\leq C\,\Big(|(y_0,y_{\Gamma,0})|_{L^2_{\mathcal{F}_0}(\Omega;\mathbb{L}^2)}+|f_1|_{L^2_{\mathcal{F}}(0,T;L^2(G))}+|f_2|_{L^2_{\mathcal{F}}(0,T;L^2(G))}\\
				&\hspace{0.83cm}\,+|g_1|_{L^2_{\mathcal{F}}(0,T;L^2(\Gamma))}+|g_2|_{L^2_{\mathcal{F}}(0,T;L^2(\Gamma))}\Big).
			\end{aligned}
		\end{align}
	\end{thm}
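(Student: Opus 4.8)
The plan is to recast \eqref{eqqwf1} as the abstract stochastic Cauchy problem \eqref{abs1.1} and then apply the general well-posedness theory for stochastic evolution equations, upgrading afterwards to the energy space by an Itô–energy argument. As a first step I would verify the hypotheses of the abstract result: by Proposition \ref{prop2.1}, $\mathcal{A}$ generates an analytic $C_0$-semigroup $(e^{t\mathcal{A}})_{t\ge0}$ on $\mathbb{L}^2$; since $a_1,a_2\in L^\infty_\mathcal{F}(0,T;L^\infty(G))$ and $b_1,b_2\in L^\infty_\mathcal{F}(0,T;L^\infty(\Gamma))$, the multiplication operators $B_1(t)$ and $B_2(t)$ are $\{\mathcal{F}_t\}$-adapted and, uniformly in $(t,\omega)$, bounded linear operators on $\mathbb{L}^2$ with norms controlled by $\max\{|a_1|_\infty,|b_1|_\infty\}$ and $\max\{|a_2|_\infty,|b_2|_\infty\}$; and $F_1,F_2\in L^2_\mathcal{F}(0,T;\mathbb{L}^2)$ by the assumptions on $f_i,g_i$. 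Consequently \cite[Theorem 3.24]{lu2021mathematical} applies and produces a unique mild solution $Y\in L^2_\mathcal{F}(\Omega;C([0,T];\mathbb{L}^2))$ represented by the stated variation-of-constants formula, together with the bound for $|Y|_{L^2_\mathcal{F}(\Omega;C([0,T];\mathbb{L}^2))}$ in terms of $|Y_0|_{L^2_{\mathcal{F}_0}(\Omega;\mathbb{L}^2)}$ and $|F_1|_{L^2_\mathcal{F}(0,T;\mathbb{L}^2)}+|F_2|_{L^2_\mathcal{F}(0,T;\mathbb{L}^2)}$; uniqueness in the larger class follows from linearity together with the energy identity below applied to the difference of two solutions with vanishing data.

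Next I would promote the regularity to $L^2_\mathcal{F}(0,T;\mathbb{H}^1)$ and obtain \eqref{forwenergyes}. Here the key structural fact is that, by Proposition \ref{prop2.1}, $-\mathcal{A}$ is the nonnegative self-adjoint operator associated with the closed symmetric form
\[
\mathfrak{a}\bigl((y,y_\Gamma),(z,z_\Gamma)\bigr)=\int_G\nabla y\cdot\nabla z\,dx+\int_\Gamma\nabla_\Gamma y_\Gamma\cdot\nabla_\Gamma z_\Gamma\,d\sigma ,
\]
whose form domain is exactly $\mathbb{H}^1$, so that $\mathbb{H}^1\hookrightarrow\mathbb{L}^2\hookrightarrow(\mathbb{H}^1)'$ is a Gelfand triple, $\langle\mathcal{A}Y,Y\rangle_{\mathbb{L}^2}=-\mathfrak{a}(Y,Y)$, and $\mathfrak{a}(Y,Y)=|Y|_{\mathbb{H}^1}^2-|Y|_{\mathbb{L}^2}^2$. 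Working first with the Yosida approximations $\mathcal{A}_\lambda=\lambda\mathcal{A}(\lambda I-\mathcal{A})^{-1}$ (for which the corresponding equations are genuine $\mathbb{L}^2$-valued Itô equations with bounded generator), applying Itô's formula to $|Y^\lambda(t)|_{\mathbb{L}^2}^2$, and letting $\lambda\to\infty$, one identifies $Y$ with the variational solution and obtains the energy identity
\[
|Y(t)|_{\mathbb{L}^2}^2+2\int_0^t\mathfrak{a}(Y,Y)\,ds=|Y_0|_{\mathbb{L}^2}^2+2\int_0^t\langle B_1Y+F_1,Y\rangle_{\mathbb{L}^2}\,ds+\int_0^t|B_2Y+F_2|_{\mathbb{L}^2}^2\,ds+M(t),
\]
where $M(t)=2\int_0^t\langle B_2Y+F_2,Y\rangle_{\mathbb{L}^2}\,dW(s)$.

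Taking expectations after the usual stopping-time localization (which makes $\mathbb{E}M(\cdot\wedge\tau_n)=0$), using $\mathfrak{a}(Y,Y)=|Y|_{\mathbb{H}^1}^2-|Y|_{\mathbb{L}^2}^2$, the boundedness of $B_1,B_2$, and Young's inequality on the cross terms, then Gronwall's lemma yields $\mathbb{E}\int_0^T|Y|_{\mathbb{H}^1}^2\,dt\le C\bigl(\mathbb{E}|Y_0|_{\mathbb{L}^2}^2+|F_1|_{L^2_\mathcal{F}(0,T;\mathbb{L}^2)}^2+|F_2|_{L^2_\mathcal{F}(0,T;\mathbb{L}^2)}^2\bigr)$. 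To recover the $L^2_\mathcal{F}(\Omega;C([0,T];\mathbb{L}^2))$-norm one takes the supremum in $t$ inside the identity and estimates $\mathbb{E}\sup_{t}|M(t)|$ by the Burkholder–Davis–Gundy inequality, absorbing the resulting term. Re-expanding $|F_1|_{\mathbb{L}^2}$ and $|F_2|_{\mathbb{L}^2}$ into their $L^2(G)$- and $L^2(\Gamma)$-components reproduces exactly the right-hand side of \eqref{forwenergyes}.

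The only genuinely delicate point I anticipate is the identification of the mild solution delivered by \cite[Theorem 3.24]{lu2021mathematical} with the variational solution, i.e. the justification that Itô's formula may be applied to $|Y(t)|_{\mathbb{L}^2}^2$ in the Gelfand triple $\mathbb{H}^1\hookrightarrow\mathbb{L}^2\hookrightarrow(\mathbb{H}^1)'$ attached to the Wentzell Laplacian; this is precisely what the Yosida-approximation step is designed to handle (alternatively one can use a Galerkin scheme, or invoke stochastic maximal regularity for the analytic semigroup $e^{t\mathcal{A}}$ on the Hilbert space $\mathbb{L}^2$). If \cite[Theorem 3.24]{lu2021mathematical} is itself stated in the variational framework, both the $\mathbb{H}^1$-regularity and the full estimate \eqref{forwenergyes} can be read off at once, which is presumably why the authors regard the deduction as easy. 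Everything else — adaptedness and uniform boundedness of $B_1,B_2$, coercivity of $\mathfrak{a}$, Gronwall, and BDG — is routine.
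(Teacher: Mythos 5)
Your proposal follows essentially the same route as the paper: the paper's entire argument is to invoke Proposition \ref{prop2.1} together with \cite[Theorem 3.24]{lu2021mathematical} applied to the abstract reformulation \eqref{abs1.1}, and you do exactly this, additionally verifying the hypotheses (adaptedness and uniform boundedness of $B_1,B_2$, square-integrability of $F_1,F_2$) and sketching the Itô--energy/Gelfand-triple argument that the cited theorem encapsulates. Your reading of the diffusion term as $(B_2(t)Y+F_2(t))\,dW(t)$ is the correct one (the displayed form $(B_2(t)+F_2(t))Y$ in \eqref{abs1.1} is a typographical slip), so no gap remains.
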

	On the other hand, we consider the following backward stochastic parabolic equation
	\begin{equation}\label{eqqwb1}
		\begin{cases}
			\begin{array}{ll}
				dz + \Delta z \,dt = (a_3z+a_4Z+f_3) \,dt + Z\,dW(t)&\textnormal{in}\,\,Q,\\
				dz_\Gamma+\Delta_\Gamma z_\Gamma \,dt-\partial_\nu z \,dt = (b_3z_\Gamma+b_4\widehat{Z}+g_3) \,dt+\widehat{Z} \,dW(t) &\textnormal{on}\,\,\Sigma,\\
				z_\Gamma(t,x)=z\vert_\Gamma(t,x) &\textnormal{on}\,\,\Sigma,\\
				(z,z_\Gamma)\vert_{t=T}=(z_T,z_{\Gamma,T}) &\textnormal{in}\,\,G\times\Gamma,
			\end{array}
		\end{cases}
	\end{equation}
	where $(z_T,z_{\Gamma,T})\in L^2_{\mathcal{F}_T}(\Omega;\mathbb{L}^2)$ is the terminal state, $(z,z_\Gamma,Z,\widehat{Z})$ is the state variable, $a_3, a_4\in L_\mathcal{F}^\infty(0,T;L^\infty(G))$, $b_3, b_4\in L_\mathcal{F}^\infty(0,T;L^\infty(\Gamma))$, $f_3\in L^2_\mathcal{F}(0,T;L^2(G))$ and $g_3\in L^2_\mathcal{F}(0,T;L^2(\Gamma))$.
	
	In the same manner as \eqref{eqqwf1}, we can rewrite equation \eqref{eqqwb1} as the following abstract backwrd stochastic Cauchy problem
	\begin{equation}\label{absback}
		\begin{cases}
			d\mathcal{Z}=(-\mathcal{A}\mathcal{Z}+B_3(t)\mathcal{Z}+B_4(t)\widehat{\mathcal{Z}}+F_3(t)) \,dt + \widehat{\mathcal{Z}} \,dW(t)\,,\\
			\mathcal{Z}(T)=\mathcal{Z}_T\in L^2_{\mathcal{F}_T}(\Omega;\mathbb{L}^2),
	\end{cases}\end{equation}
	where $$\mathcal{Z}=
	\begin{pmatrix}
		z\\
		z_\Gamma
	\end{pmatrix},\; \mathcal{Z}_T=
	\begin{pmatrix}
		z_T\\
		z_{\Gamma,T}
	\end{pmatrix},\;
	\widehat{\mathcal{Z}}=
	\begin{pmatrix}
		Z\\
		\widehat{Z}
	\end{pmatrix},\; B_3=
	\begin{pmatrix}
		a_3&0\\
		0&b_3
	\end{pmatrix},\; B_4=
	\begin{pmatrix}
		a_4&0\\
		0&b_4
	\end{pmatrix},\; F_3=
	\begin{pmatrix}
		f_3\\
		g_3
	\end{pmatrix}.
	$$
	From Proposition \ref{prop2.1} and \cite[Theorem 4.11]{lu2021mathematical}, we deduce the following well-posedness of \eqref{eqqwb1}.
	\begin{thm}\label{wellposedness2}
		For each  $(z_T, z_{\Gamma,T}) \in L_{\mathcal{F}_T}^2\left(\Omega ; \mathbb{L}^2\right)$, $f_3\in L^2_\mathcal{F}(0,T;L^2(G))$ and $g_3\in L^2_\mathcal{F}(0,T;L^2(\Gamma))$, the equation \eqref{absback} $($hence, \eqref{eqqwb1}$)$ has a unique mild solution
		$$
		(\mathcal{Z},\widehat{\mathcal{Z}})=(z, z_{\Gamma}, Z, \widehat{Z}) \in\left(L_{\mathcal{F}}^2\left(\Omega ; C([0, T]; \mathbb{L}^2)\right) \bigcap L_{\mathcal{F}}^2\left(0, T ; \mathbb{H}^1\right)\right) \times L_{\mathcal{F}}^2\left(0, T ; \mathbb{L}^2\right),
		$$
		so that for all $t\in[0,T]$
		$$\mathcal{Z}(t)=e^{(T-t)\mathcal{A}}\mathcal{Z}_T-\int_t^T e^{(s-t)\mathcal{A}} \Big[B_3(s)\mathcal{Z}(s)+B_4(s)\widehat{\mathcal{Z}}(s)+F_3(s)\Big]\,ds-\int_t^T e^{(s-t)\mathcal{A}} \widehat{\mathcal{Z}}(s) \,dW(s), \;\;\mathbb{P}\textnormal{-a.s.}$$
		Furthermore, there exists a constant $C>0$ depending only on $G$, $T$, $a_3, \,a_4, \,b_3$, and $b_4$  such that
		\begin{align}\label{wbac2.1}
			\begin{aligned}
				&\,\left|\left(z, z_{\Gamma}\right)\right|_{L_{\mathcal{F}}^2\left(\Omega ; C\left([0, T] ; \mathbb{L}^2\right)\right)}+\left|\left(z, z_{\Gamma}\right)\right|_{L_{\mathcal{F}}^2\left(0, T ; \mathbb{H}^1\right)}+|(Z, \widehat{Z})|_{L_{\mathcal{F}}^2\left(0, T ; \mathbb{L}^2\right)}\\
				&\leq C\Big(\left|\left(z_T, z_{\Gamma, T}\right)\right|_{L_{\mathcal{F}_T}^2\left(\Omega ; \mathbb{L}^2\right)}+|f_3|_{L^2_\mathcal{F}(0,T;L^2(G))} +|g_3|_{L^2_\mathcal{F}(0,T;L^2(\Gamma))}\Big).
			\end{aligned}
		\end{align}
	\end{thm}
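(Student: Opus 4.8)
The plan is to reduce the backward system \eqref{eqqwb1} to the abstract backward stochastic evolution equation \eqref{absback} posed in the Hilbert space $\mathbb{L}^2$ and then invoke the general well-posedness theory for such equations. The essential structural input is Proposition \ref{prop2.1}: the operator $\mathcal{A}$ is self-adjoint, dissipative, and generates an analytic $C_0$-semigroup $(e^{t\mathcal{A}})_{t\geq 0}$ on $\mathbb{L}^2$, with the coupling term $\partial_\nu z$ and the surface dynamics absorbed into $\mathcal{A}$ and its form domain $\mathbb{H}^1$. The coefficient operators $B_3, B_4$ are bounded and $\{\mathcal{F}_t\}$-adapted multiplication operators on $\mathbb{L}^2$, since $a_3,a_4\in L^\infty_\mathcal{F}(0,T;L^\infty(G))$ and $b_3,b_4\in L^\infty_\mathcal{F}(0,T;L^\infty(\Gamma))$; and $F_3\in L^2_\mathcal{F}(0,T;\mathbb{L}^2)$ by hypothesis. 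Thus the hypotheses of the abstract result \cite[Theorem 4.11]{lu2021mathematical} are met.

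First I would treat the unperturbed equation obtained by dropping the bounded lower-order terms, i.e. with $B_3=B_4=0$. Applying the cited theorem (with generator $-\mathcal{A}$ governing the backward flow) yields a unique mild solution $(\mathcal{Z},\widehat{\mathcal{Z}})$ in the stated space together with the representation formula and a first energy estimate. To reinstate the terms $B_3\mathcal{Z}+B_4\widehat{\mathcal{Z}}$, I would run a fixed-point argument: define the map sending $(\mathcal{Z},\widehat{\mathcal{Z}})$ to the mild solution of the unperturbed equation with inhomogeneity $B_3\mathcal{Z}+B_4\widehat{\mathcal{Z}}+F_3$, and show it contracts on $L^2_\mathcal{F}(\Omega;C([0,T];\mathbb{L}^2))\times L^2_\mathcal{F}(0,T;\mathbb{L}^2)$ --- either after introducing an exponential weight $e^{\lambda(T-t)}$ with $\lambda$ large, or on a short interval $[T-\delta,T]$ whose length depends only on $|B_3|_\infty,|B_4|_\infty$, and then iterating backward to cover $[0,T]$. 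The uniform boundedness of $B_3,B_4$ is precisely what makes this contraction close.

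For the a priori bound \eqref{wbac2.1}, I would apply Itô's formula to $t\mapsto |\mathcal{Z}(t)|_{\mathbb{L}^2}^2$. The drift contributes $2\langle \mathcal{Z},-\mathcal{A}\mathcal{Z}\rangle_{\mathbb{L}^2}$, which by the dissipativity (self-adjointness) of $\mathcal{A}$ controls, after time integration, the full $\mathbb{H}^1$-form energy $|\mathcal{Z}|_{\mathbb{H}^1}^2$; the cross terms $\langle \mathcal{Z},\, B_3\mathcal{Z}+B_4\widehat{\mathcal{Z}}+F_3\rangle_{\mathbb{L}^2}$ and the Itô correction $|\widehat{\mathcal{Z}}|_{\mathbb{L}^2}^2$ are absorbed by Young's inequality together with the $L^\infty$-bounds on $B_3,B_4$. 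Taking expectations, a supremum over $t$, the Burkholder--Davis--Gundy inequality for the martingale part, and Gronwall's inequality then deliver simultaneously the $L^2_\mathcal{F}(\Omega;C([0,T];\mathbb{L}^2))$ norm, the $L^2_\mathcal{F}(0,T;\mathbb{H}^1)$ norm, and the $L^2_\mathcal{F}(0,T;\mathbb{L}^2)$ norm of $(Z,\widehat{Z})$ in terms of the data, with a constant depending only on $G$, $T$ and the $L^\infty$-bounds of the coefficients.

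The main obstacle is not existence per se --- that follows from the cited abstract theorem --- but rather justifying that the abstract mild solution carries the variational regularity $(z,z_\Gamma)\in L^2_\mathcal{F}(0,T;\mathbb{H}^1)$ and that \eqref{absback} is genuinely equivalent to the boundary-value formulation \eqref{eqqwb1}. This hinges on identifying $\mathbb{H}^1$ as the form domain of the self-adjoint operator $\mathcal{A}$ and on the analyticity of $(e^{t\mathcal{A}})_{t\geq 0}$, which supplies the maximal-regularity smoothing needed to place the solution in $\mathbb{H}^1$; the interpretation of the normal-derivative coupling $\partial_\nu z$ as an intrinsic part of the generator then guarantees that the abstract and the PDE formulations coincide.
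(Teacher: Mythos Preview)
Your proposal is correct and follows the same approach as the paper: the paper does not give a proof but simply states that the result is deduced from Proposition~\ref{prop2.1} together with \cite[Theorem~4.11]{lu2021mathematical}, which is precisely the reduction-to-abstract-BSEE argument you outline. Your additional sketch of the fixed-point construction and the It\^o-formula energy estimate is a faithful elaboration of what that cited theorem provides, so there is nothing to correct.
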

	\section{Existence, uniqueness, and characterization of Nash-equilibrium}\label{section4}
	Let us first show the following results  concerning the existence and uniqueness of Nash-equilibrium for functionals $J_1$ and $J_2$ given by \eqref{Ji.12}. Here, we denote by $\mathcal{V}=\mathcal{V}_1\times\mathcal{V}_2$.
	\begin{prop}\label{propp4.1}
		There exists a constant $\beta_0 >0$ such that, if $\beta_i\geq \beta_0$ for $i = 1, 2$, then for   each   $(u_1,u_2,u_3)\in\mathcal{U}$, there exists a unique Nash-equilibrium $(v^{\star}_1,v^{\star}_2)=(v^{\star}_1(u_1,u_2,u_3),v^{\star}_2(u_1,u_2,u_3))\in \mathcal{V}$ for $(J_1, J_2)$ associated to $(u_1,u_2,u_3)$. Furthermore, there exists a constant $C>0 $  such that
		\begin{equation}\label{propine4.1}
			|(v^{\star}_1,v^{\star}_2)|_{\mathcal{V}}\leq C\big(1+ |(u_1,u_2,u_3)|_{\mathcal{U}}\big).
		\end{equation}
	\end{prop}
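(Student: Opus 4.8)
The plan is to translate the characterization of the Nash equilibrium \eqref{NE7} into a coupled system of forward and backward stochastic parabolic equations, and then solve it by a fixed-point argument. First I would compute the Gateaux derivatives in \eqref{NE7} explicitly. For a fixed $(u_1,u_2,u_3)\in\mathcal{U}$, the map $(v_1,v_2)\mapsto Y(u_1,u_2,u_3,v_1,v_2)$ is affine; write $Y=Y^0+\sum_{j}Y^j$ where $Y^0$ is the solution driven by the leaders and the initial data, and $Y^j$ ($j=1,2$) is the linear response to $v_j$. Differentiating $J_i$ in the direction $v\in\mathcal{V}_i$ yields
\begin{equation*}
	\alpha_i\,\mathbb{E}\int_{(0,T)\times G_{i,d}}(y-y_{i,d})\,\widetilde{y}^{\,v}\,dx\,dt+\beta_i\,\mathbb{E}\int_{(0,T)\times G_i}v_i^{\star}\,v\,dx\,dt=0,
\end{equation*}
where $\widetilde{Y}^{\,v}=(\widetilde{y}^{\,v},\widetilde{y}_\Gamma^{\,v})$ solves the linearized state equation with source $\chi_{G_i}v$. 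To eliminate $\widetilde{y}^{\,v}$, I would introduce adjoint states $(\zeta^i,\zeta_\Gamma^i,\Xi^i,\widehat{\Xi}^i)$, $i=1,2$, solving backward stochastic parabolic equations of the type \eqref{eqqwb1} with terminal data zero and source $\alpha_i\chi_{G_{i,d}}(y-y_{i,d})$ in the bulk. Using Itô's formula (the duality between \eqref{eqqwf1} and \eqref{eqqwb1}, valid by Theorems \ref{wellposedness1} and \ref{wellposedness2}), the optimality condition becomes $v_i^{\star}=-\tfrac{1}{\beta_i}\chi_{G_i}\zeta^i$. Hence the Nash equilibrium is characterized by the coupled forward–backward system consisting of \eqref{eqq1.1} with $v_i=-\tfrac1{\beta_i}\chi_{G_i}\zeta^i$, together with the two adjoint equations.

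Next I would set up the fixed-point map $\Lambda:\mathcal{V}\to\mathcal{V}$ by $\Lambda(v_1,v_2)=\big(-\tfrac1{\beta_1}\chi_{G_1}\zeta^1,-\tfrac1{\beta_2}\chi_{G_2}\zeta^2\big)$, where one first solves the forward equation \eqref{eqq1.1} with the given $(v_1,v_2)$ to obtain $Y$, then solves the two backward adjoint equations to obtain $\zeta^1,\zeta^2$. A Nash equilibrium is exactly a fixed point of $\Lambda$. Using the energy estimates \eqref{forwenergyes} and \eqref{wbac2.1}, I would show that $\Lambda$ maps $\mathcal{V}$ into itself and that, for $\beta_1,\beta_2$ large enough, $\Lambda$ is a strict contraction: indeed, for two choices $(v_1,v_2)$ and $(\bar v_1,\bar v_2)$, the difference of the forward solutions is controlled by $\sum_i\tfrac{1}{\beta_i}|v_i-\bar v_i|$ via \eqref{forwenergyes}, and the difference of the backward solutions is controlled by the difference of the forward solutions via \eqref{wbac2.1}, so composing gives a Lipschitz constant of order $\sum_i C/\beta_i$. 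Choosing $\beta_0=2C$ (or any sufficiently large threshold) makes this $<1$, and the Banach fixed-point theorem yields existence and uniqueness of $(v_1^{\star},v_2^{\star})$. Uniqueness of the Nash equilibrium itself follows because every Nash equilibrium is a fixed point of $\Lambda$.

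Finally, the estimate \eqref{propine4.1} comes from plugging $v_i=\bar v_i=0$ style bookkeeping into the same estimates: apply \eqref{forwenergyes} to \eqref{eqq1.1} with $v_i^\star=-\tfrac1{\beta_i}\chi_{G_i}\zeta^i$ to bound $|Y|$ by $C(|Y_0|+|(u_1,u_2,u_3)|_{\mathcal{U}}+\sum_i\tfrac1{\beta_i}|\zeta^i|)$, then apply \eqref{wbac2.1} to bound $|\zeta^i|$ by $C(|y_{i,d}|_{\mathcal{V}_{i,d}}+|Y|)$; absorbing the small terms $\tfrac{C}{\beta_i}|\zeta^i|$ into the left-hand side (again using $\beta_i$ large) closes the estimate and gives \eqref{propine4.1} with the constant depending on $T$, $G$, the coefficients, the $\alpha_i,\beta_i$, and $|y_{i,d}|$. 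The main obstacle I anticipate is purely bookkeeping: carefully justifying the Itô-duality computation that produces the characterization $v_i^{\star}=-\tfrac1{\beta_i}\chi_{G_i}\zeta^i$ — one must check that the boundary terms from $\partial_\nu$ and $\Delta_\Gamma$ cancel correctly in the dynamic-boundary setting (using the surface divergence formula) and that all the stochastic integrals have zero expectation, which is where the precise regularity from Theorems \ref{wellposedness1}–\ref{wellposedness2} is needed.
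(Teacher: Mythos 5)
Your argument is correct, but it takes a genuinely different route from the paper. The paper decomposes the state as $Y=L_1(v_1)+L_2(v_2)+L(u_1,u_2,u_3)$ via the bounded solution operators $\ell_i$, rewrites the first-order conditions \eqref{NE7} as a linear variational equation $\textbf{a}((v_1^{\star},v_2^{\star}),(v_1,v_2))=\Psi(v_1,v_2)$ on $\mathcal{V}$ for an explicit bilinear form built from $\ell_i$ and $\ell_i^*$, and invokes Lax--Milgram: coercivity holds once $\beta_i$ exceeds a constant controlled by $\alpha_i$ and $\|\ell_i\|^2$, and the estimate \eqref{propine4.1} drops out of the Lax--Milgram bound. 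You instead pass immediately to the adjoint characterization $v_i^{\star}=-\tfrac1{\beta_i}\chi_{G_i}\zeta^i$ --- which is exactly what the paper derives \emph{after} the proposition, in \eqref{4.662} and the optimality system \eqref{eqq4.7} --- and close the loop with a Banach fixed-point argument whose contraction constant is $O\big(\sum_i 1/\beta_i\big)$, obtained by composing \eqref{forwenergyes} with \eqref{wbac2.1}. Both routes require $\beta_i$ large and give the same conclusion; Lax--Milgram is leaner because it never touches the backward equation, while your route produces the adjoint system needed in Section \ref{section4} as a by-product. Two minor points to tighten: the factor $1/\beta_i$ enters when you form $\Lambda(v_1,v_2)=\big(-\tfrac1{\beta_i}\chi_{G_i}\zeta^i\big)_i$, not in the estimate of the difference of forward solutions (that difference is $O\big(\sum_i|v_i-\bar v_i|_{\mathcal{V}_i}\big)$ with no $1/\beta_i$) --- though the composed Lipschitz constant you state is right; and for uniqueness you should say explicitly that convexity of $J_i$ in $v_i$ makes the first-order conditions both necessary and sufficient, so that every Nash equilibrium is indeed a fixed point of $\Lambda$.
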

	\begin{proof}
		Define the bounded operators $L_i\in \mathcal{L}(\mathcal{V}_i; L_\mathcal{F}^2(0,T; \mathbb{L}^2))$ and  $\ell_i\in \mathcal{L}(\mathcal{V}_i; L_\mathcal{F}^2(0,T; L^2(G)))$  by
		\begin{align*}
			L_i(v_i)=Y^i= (y^i, y_{\Gamma}^i),\,\quad\text{and}\,\quad \ell_i(v_i)=y^i,\qquad i=1,2,
		\end{align*}
		where $Y^i=(y^i, y_{\Gamma}^i)$ is the solution of the following equation
		\begin{equation}\label{1.1}
			\begin{cases}
				\begin{array}{ll}
					dy^{i} - \Delta y^{i} \,dt = [a_1 y^{i}+\chi_{G_i}(x)v_i] \,dt + a_2 y^{i} \,dW(t)&\textnormal{in}\,\,Q,\\
					dy^{i}_\Gamma-\Delta_\Gamma y^{i}_\Gamma \,dt+\partial_\nu y^{i} \,dt = b_1 y^{i}_\Gamma \,dt+ b_2 y^{i}_\Gamma   \,dW(t) &\textnormal{on}\,\,\Sigma,\\
					y^{i}_\Gamma(t,x)=y^{i}\vert_\Gamma(t,x) &\textnormal{on}\,\,\Sigma,\\
					(y^{i},y^{i}_\Gamma)\vert_{t=0}=(0,0) &\textnormal{in}\,\,G\times\Gamma,\\\
					i=1,2.
				\end{array}
			\end{cases}
		\end{equation}
		By linearity, it is easy to see that the solution $Y$ of \eqref{eqq1.1} can be expressed as follows
		$$Y(u_1,u_2,u_3,v_1,v_2)=L_1(v_1)+L_2(v_2)+L(u_1,u_2,u_3),$$
		where $L(u_1,u_2,u_3)=(q, q_{\Gamma})$ solves the system
		\begin{equation*}\label{1.132}
			\begin{cases}
				\begin{array}{ll}
					dq - \Delta q \,dt = [a_1 q+\chi_{G_0}(x)u_1] \,dt + (a_2 q+u_2)\,dW(t)&\textnormal{in}\,\,Q,\\
					dq_\Gamma-\Delta_\Gamma q_\Gamma \,dt+\partial_\nu q \,dt = b_1 q_\Gamma \,dt+ (b_2 q_\Gamma +u_3) \,dW(t) &\textnormal{on}\,\,\Sigma,\\
					q_\Gamma(t,x)=q\vert_\Gamma(t,x) &\textnormal{on}\,\,\Sigma,\\
					(q,q_\Gamma)\vert_{t=0}=(y_0,y_{\Gamma,0})&\textnormal{in}\,\,G\times\Gamma.
				\end{array}
			\end{cases}
		\end{equation*}
		For fixed $(u_1,u_2,u_3)\in\mathcal{U}$, we have that  for all $v_i\in \mathcal{V}_{i}$ ($i=1,2$)
		\begin{align*}
			\frac{\partial J_i}{\partial v_i}(u_1,u_2,u_3; v^{\star}_1,v^{\star}_2)(v_i) =&\,\,\alpha_i \big\langle \ell_1(v^{\star}_1)+\ell_2(v^{\star}_2) +q(u_1,u_2,u_3)-y_{i,d},	\ell_i(v_i)\big\rangle_{\mathcal{V}_{i,d}}+ \beta_i \big\langle v^{\star}_i,v_i\big\rangle_{\mathcal{V}_{i}}.
		\end{align*}
		Thus, $(v^{\star}_1,v^{\star}_2)$ is a Nash-equilibrium if and only if	
		\begin{align}\label{4.3nashcara}
			\alpha_i \big\langle\ell_i^*\big[\ell_1(v^{\star}_1)+\ell_2(v^{\star}_2) -\widetilde{y}_{i,d}\big],	v_i\big\rangle_{\mathcal{V}_{i,d}}+ \beta_i \big\langle v^{\star}_i,v_i\big\rangle_{\mathcal{V}_{i}}=0,\quad\forall v_i\in \mathcal{V}_{i}, \qquad i=1,2,
		\end{align}
		where $\widetilde{y}_{i,d}=y_{i,d}-q$. Therefore, $(v^{\star}_1,v^{\star}_2)$ is a Nash-equilibrium if and only if
		$$\alpha_i \ell_i^*\left[\ell_1(v^{\star}_1)\chi_{G_{i,d}}+\ell_2(v^{\star}_2)\chi_{G_{i,d}}\right]+ \beta_i v^{\star}_i=\alpha_i\ell_i^*(\widetilde{y}_{i,d}\chi_{G_{i,d}}),\qquad i=1,2.$$
		Now, let us define the following operator $\textbf{M}\in\mathcal{L}(\mathcal{V};\mathcal{V})$ by
		$$\textbf{M}(v_1,v_2)=\Big(\alpha_1 \ell_1^*\left[\ell_1(v_1)\chi_{G_{1,d}}+\ell_2(v_2)\chi_{G_{1,d}}\right]+ \beta_1 v_1\,,\,\alpha_2 \ell_2^*[\ell_1(v_1)\chi_{G_{2,d}}+\ell_2(v_2)\chi_{G_{2,d}}]+ \beta_2 v_2\Big),$$
		for all $(v_1,v_2)\in\mathcal{V}$, and introduce the following bi-linear functional $\textbf{a}:\mathcal{V}\times\mathcal{V}\rightarrow\mathbb{R}$ as
		$$\textbf{a}((v_1,v_2),(\widetilde{v}_1,\widetilde{v}_2))=\langle \textbf{M}(v_1,v_2),(\widetilde{v}_1,\widetilde{v}_2)\rangle_\mathcal{V},\qquad\forall \big((v_1,v_2),(\widetilde{v}_1,\widetilde{v}_2)\big)\in\mathcal{V}\times\mathcal{V}.$$
		We also define the following linear continuous functional $\Psi:\mathcal{V}\rightarrow\mathbb{R}$ by
		$$\Psi(v_1,v_2)=\big\langle(v_1,v_2),(\alpha_1\ell_1^*(\widetilde{y}_{1,d}\chi_{1,d}),\alpha_2\ell_2^*(\widetilde{y}_{2,d}\chi_{2,d}))\big\rangle_\mathcal{V},\qquad\forall(v_1,v_2)\in\mathcal{V}.$$
		Following some ideas from the proof of \cite[Proposition 4]{BoMaOuNash}, it is not difficult to see that $\textbf{a}$ is continuous and coercive. Now, by Lax-Milgram theorem, it is easy to see that there exists a unique $(v^*_1,v^*_2)\in\mathcal{V}$ such that
		\begin{align}\label{4.4105}
			\textbf{a}((v^*_1,v^*_2),(v_1,v_2))=\Psi(v_1,v_2),\qquad\forall(v_1,v_2)\in\mathcal{V}.
		\end{align}
		This easily implies the existence and uniqueness of the Nash equilibrium $(v^*_1,v^*_2)$ for $(J_1,J_2)$ associated to $(u_1,u_2,u_3)$, and from \eqref{4.4105}, we deduce that 
		$$|(v^*_1,v^*_2)|_\mathcal{V}\leq C|(\alpha_1\ell_1^*(\widetilde{y}_{1,d}\chi_{1,d}),\alpha_2\ell_2^*(\widetilde{y}_{2,d}\chi_{2,d}))|_\mathcal{V}.$$
		Thus, this provides the desired estimates \eqref{propine4.1}.
	\end{proof}
	Now, we  characterize the Nash equilibrium by the following adjoint system  
	\begin{equation}\label{backadj}
		\begin{cases} 
			\begin{array}{ll}
				dz^{i}+\Delta z^{i} \,dt=\big[-a_1 z^{i}-a_2 Z^{i}-\alpha_i(y-y_{i,d})\chi_{G_{i,d}}(x)\big] \,dt+Z^{i} \,dW(t) &\textnormal{in}\,\,Q, \\ dz^{i}_{\Gamma}+\Delta_\Gamma z^{i}_\Gamma \,dt-\partial_\nu z^i \,dt=\big[-b_1 z^{i}_{\Gamma}-b_2 \widehat{Z}^{i}\big] \,dt+\widehat{Z}^{i} \,dW(t) &\textnormal{on}\,\,\Sigma, \\ z^i_{\Gamma}(t, x)=z^{i}|_\Gamma(t, x) &\textnormal{on}\,\,\Sigma, \\ (z^i, z^{i}_\Gamma)|_{t=T}=(0,0) &\textnormal{in}\,\,G \times \Gamma,\\
				i=1,2.
			\end{array}
		\end{cases}
	\end{equation}
	Multiplying solutions of \eqref{1.1} and \eqref{backadj} and integrating by parts, we find that
	\begin{equation}\label{4.662}
		\alpha_i\big\langle y(u_1,u_2,u_3,v_1,v_2)-y_{i,d}, \ell_i(v_i)\big\rangle_{\mathcal{V}_{i,d}}= \langle z^i, v_i\rangle_{\mathcal{V}_i},\qquad i=1,2.
	\end{equation}
	Combining \eqref{4.3nashcara} and \eqref{4.662}, we deduce that, $(v^{\star}_1,v^{\star}_2)$ is a Nash-equilibrium if and only if 
	$$\langle z^i, v_i\rangle_{\mathcal{V}_i}+\beta_i\langle v^{\star}_i, v_i\rangle_{\mathcal{V}_i}=0, \quad  \forall v_i\in \mathcal{V}_i,\qquad i=1,2.$$
	Therefore
	\begin{equation*}\label{chara.1}
		v^{\star}_i=-\frac{1}{\beta_i} z^i|_{(0,T)\times G_i}, \qquad i=1,2.
	\end{equation*}
	
	Using the previous results, we now obtain the following forward-backward system, called the optimality system.
	\begin{equation}\label{eqq4.7}
		\begin{cases}
			\begin{array}{ll}
				dy - \Delta y \,dt = \Big[a_1y+\chi_{G_0}(x)u_1-\frac{1}{\beta_1}\chi_{G_1}(x)z^1-\frac{1}{\beta_2}\chi_{G_2}(x)z^2\Big] \,dt + (a_2y +u_2)\,dW(t)&\textnormal{in}\,\,Q,\\
				dy_\Gamma-\Delta_\Gamma y_\Gamma \,dt+\partial_\nu y \,dt = b_1y_\Gamma \,dt+(b_2y_\Gamma  +\,u_3) \,dW(t)&\textnormal{on}\,\,\Sigma,\\
				dz^{i}+\Delta z^{i} \,dt=\big[-a_1 z^{i}-a_2 Z^{i}-\alpha_i(y-y_{i,d})\chi_{G_{i,d}}(x)\big] \,dt+Z^{i} \,dW(t)&\textnormal{in}\,\,Q, \\ 
				dz^{i}_{\Gamma}+\Delta_\Gamma z^{i}_\Gamma \,dt-\partial_\nu z^i \,dt=\big[-b_1 z^{i}_{\Gamma}-b_2 \widehat{Z}^{i}\big] \,dt+\widehat{Z}^{i} \,dW(t)&\textnormal{on}\,\,\Sigma, \\ 
				y_\Gamma(t,x)=y\vert_\Gamma(t,x) &\textnormal{on}\,\,\Sigma,\\
				z^i_{\Gamma}(t, x)=z^{i}|_\Gamma(t, x)&\textnormal{on}\,\,\Sigma, \\
				(y, y_\Gamma)|_{t=0}=(y_0, y_{\Gamma, 0}) &\textnormal{in}\,\, G\times \Gamma, \\
				(z^i, z^{i}_\Gamma)|_{t=T}=(0, 0)&\textnormal{in}\,\,G \times \Gamma,\\
				i=1,2.
			\end{array}
		\end{cases}
	\end{equation}
	In the sequel, we will show the null controllability for the system \eqref{eqq4.7} by establishing a suitable observability inequality for the following adjoint system
	\begin{equation}\label{ADJSO1}
		\begin{cases}
			d\phi+\Delta \phi \,dt=\big[-a_1 \phi-a_2\Phi+\alpha_1 \psi^{1}\chi_{G_{1,d}}(x)+\alpha_2\psi^{2}\chi_{G_{2,d}}(x) \big] \,dt+\Phi \,dW(t) &\textnormal{in}\,\,Q, \\ 
			d\phi_{\Gamma}+\Delta_\Gamma \phi_\Gamma \,dt-\partial_\nu \phi \,dt=\big[-b_1\phi_{\Gamma}-b_2 \widehat{\Phi}\big] \,dt+ \widehat{\Phi} \,dW(t) &\textnormal{on}\,\,\Sigma, \\ 
			d\psi^{i} - \Delta \psi^{i}\,dt = [a_1\psi^{i}+\frac{1}{\beta_{i}}\chi_{G_i}(x)\phi] \,dt + a_2 \psi^{i} \,dW(t)&\textnormal{in}\,\,Q,\\
			d\psi^i_\Gamma-\Delta_\Gamma \psi^i_\Gamma \,dt+\partial_\nu \psi^i \,dt = b_1\psi^i_\Gamma \,dt+ b_2\psi^i_\Gamma   \,dW(t) &\textnormal{on}\,\,\Sigma,\\
			\phi_{\Gamma}(t, x)=\phi|_\Gamma(t, x) &\textnormal{on}\,\,\Sigma, \\
			\psi^i_\Gamma(t,x)=\psi^i\vert_\Gamma(t,x) &\textnormal{on}\,\,\Sigma,\\
			(\phi,\phi_\Gamma)|_{t=T}=(\phi_T,\phi_{\Gamma, T}) &\textnormal{in}\,\,G\times\Gamma,\\
			(\psi^i, \psi^i_\Gamma)|_{t=0}=(0, 0) &\textnormal{in}\,\,G\times\Gamma,\\
			i=1,2.
		\end{cases}
	\end{equation}
	\section{Carleman estimates}\label{section5}
	In this section, we prove the needed Carleman estimate for the coupled system \eqref{ADJSO1}. Let us first introduce the following technical result. For the proof, see \cite{BFurIman}.
	\begin{lm}\label{lmm5.1}
		For any nonempty open subset $G'\Subset G$ $($i.e., $\overline{G'}\subset G$$)$, there exists a function $\eta\in C^4(\overline{G})$ such that
		$$
		\eta>0\;\,\, \textnormal{in} \,\,G\,;\qquad \eta=0\;\,\,\, \textnormal{on} \,\,\Gamma;\qquad\vert\nabla\eta\vert>0\; \,\,\,\,\textnormal{in}\,\,\overline{G\setminus G'}.
		$$
	\end{lm}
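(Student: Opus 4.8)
The plan is to construct the function $\eta$ explicitly using a classical Morse-theoretic device due to Fursikov and Imanuvilov, adapted here to the requirement $\eta = 0$ on $\Gamma$ (rather than $\eta$ bounded away from zero at the boundary, as in the Dirichlet-Carleman setting). First I would fix an auxiliary open set $G''$ with $\overline{G'} \subset G'' \Subset G$, and work on the "collar" $G \setminus \overline{G'}$ where I need $|\nabla\eta| > 0$. The classical fact I will invoke is: for any smooth bounded domain $\mathcal{O}$ and any nonempty open $\omega \Subset \mathcal{O}$, there exists $\psi \in C^\infty(\overline{\mathcal{O}})$ with $\psi > 0$ in $\mathcal{O}$, $\psi = 0$ on $\partial\mathcal{O}$, $|\partial_\nu\psi| > 0$ on $\partial\mathcal{O}$, and $\nabla\psi \neq 0$ on $\overline{\mathcal{O}\setminus\omega}$. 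This is proved by taking a function with finitely many nondegenerate critical points (a generic smooth function has this property by Sard/transversality), all of which can be pushed into $\omega$ by composing with a diffeomorphism of $\mathcal{O}$ isotopic to the identity that drags the critical points into $\omega$, and then correcting the boundary behavior.

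The key steps, in order, would be: (i) take any $\tilde\eta \in C^4(\overline{G})$ with $\tilde\eta > 0$ in $G$, $\tilde\eta = 0$ on $\Gamma$, and $\partial_\nu \tilde\eta < 0$ on $\Gamma$ (e.g.\ build it from the signed distance to $\Gamma$ near the boundary, glued to a positive bump in the interior via a partition of unity); (ii) observe that the set $\mathcal{C}$ of critical points of $\tilde\eta$ in $\overline{G}$ is contained in the compact set $\{x : \partial_\nu\tilde\eta \text{ makes sense}\}^c \cap \overline{G}$, hence a compact subset of the open set $G$; (iii) if $\mathcal{C} \cap (G \setminus G') \neq \emptyset$, choose a $C^4$-diffeomorphism $\Theta : \overline{G} \to \overline{G}$ equal to the identity near $\Gamma$ and mapping a neighborhood of $\mathcal{C}$ into $G'$ — such a $\Theta$ exists because $G$ is connected and one can flow along a compactly supported vector field; (iv) set $\eta := \tilde\eta \circ \Theta^{-1}$. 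Then $\eta \in C^4(\overline{G})$, $\eta > 0$ in $G$, $\eta = 0$ on $\Gamma$ (since $\Theta = \mathrm{id}$ near $\Gamma$), and the critical points of $\eta$ are $\Theta(\mathcal{C}) \subset G'$, so $|\nabla\eta| > 0$ on $\overline{G \setminus G'}$.

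The main obstacle — and the step I expect to require the most care — is arranging the diffeomorphism $\Theta$ in step (iii) simultaneously with the $C^4$ regularity and the boundary normalization: one must ensure the isotopy moving the (finitely many, after a preliminary generic perturbation) critical points into $G'$ stays inside $G$ and is the identity in a neighborhood of $\Gamma$. This is handled by noting that $G$ minus a point is still connected (here $N \geq 2$ is used), so each critical point can be joined to a point of $G'$ by a path avoiding $\Gamma$, and one integrates a smooth compactly-supported vector field supported along a tubular neighborhood of these paths; the time-one flow is the desired $\Theta$. Since the paper only needs existence and cites \cite{BFurIman} for the full argument, I would present the construction at this level of detail and refer there for the routine verifications, in particular the verification that after the preliminary perturbation the critical set is finite and that the gluing in step (i) produces a genuinely $C^4$ function with nonvanishing normal derivative on $\Gamma$.
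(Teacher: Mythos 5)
Your proposal is correct and follows essentially the same route as the paper, which gives no proof of its own but simply cites Fursikov--Imanuvilov \cite{BFurIman}; your sketch is precisely the classical argument from that reference (a boundary-defining function with nonvanishing normal derivative, a generic interior perturbation to make the critical set finite, and a compactly supported isotopy pushing the critical points into $G'$). The points you defer as routine (preserving positivity and the boundary normalization under the Morse perturbation, and the $C^4$ regularity of the composition) are indeed handled in the cited source, so no further detail is needed here.
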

	In what follows, we particularly consider $G'\Subset G_0$. Since $\vert\nabla\eta\vert^2=\vert\nabla_\Gamma\eta\vert^2+\vert\partial_\nu\eta\vert^2$ on $\Gamma$,  the function $\eta$ in Lemma \ref{lmm5.1} satisfies
	\begin{equation*}
		\nabla_\Gamma\eta=0\,,\qquad \vert\nabla\eta\vert=\vert\partial_\nu\eta\vert\,,\qquad\partial_\nu\eta\leq -c<0\,\,\,\;\;\textnormal{on}\,\,\;\Gamma, \,\,\,\textnormal{for some constant}\,\,c>0.
	\end{equation*}
	For large parameters $\lambda>1$ and $\mu>1$, we choose the following weight functions
	\begin{align*}
		&\,\alpha=\alpha(t,x) = \frac{e^{\mu\eta(x)}-e^{2\mu\vert\eta\vert_\infty}}{t(T-t)},\qquad \varphi=\varphi(t,x) = \frac{e^{\mu\eta(x)}}{t(T-t)},\qquad\theta=e^{\lambda\alpha}.
	\end{align*}
	Moreover, the weight functions $\alpha$ and $\varphi$ are constants on the boundary $\Gamma$, then one has
	\begin{equation*}
		\nabla_\Gamma\alpha=0\qquad\textnormal{and}\qquad\nabla_\Gamma\varphi=0\,\quad \textnormal{on}\;\,\Gamma.
	\end{equation*}
	On the other hand, it is easy to check that there exists a constant $C>0$ depending only on $G$, $G_0$, and  $T$ such that for all $(t,x)\in(0,T)\times\overline{G}$
	\begin{align*}\begin{aligned}
			&\,\varphi(t,x)\geq C,\qquad\vert\varphi_t(t,x)\vert\leq C\varphi^2(t,x),\qquad\vert\varphi_{tt}(t,x)\vert\leq C\varphi^3(t,x),\\
			&\vert\alpha_t(t,x)\vert\leq Ce^{2\mu\vert\eta\vert_\infty}\varphi^2(t,x),\qquad\vert\alpha_{tt}(t,x)\vert\leq Ce^{2\mu\vert\eta\vert_\infty}\varphi^3(t,x).
	\end{aligned}\end{align*}
	To simplify the formulas, we adopt the following notation
	\begin{align*}
		\mathcal{I}(\lambda,\mu; \xi, \xi_{\Gamma})&:= \lambda^3\mathbb{E}\int_Q\theta^2\varphi^3 \xi^2\,dx\,dt+\lambda^3\mathbb{E}\int_\Sigma\theta^2\varphi^3 \xi_\Gamma^2\,d\sigma\,dt\\ &\quad+\lambda\mathbb{E}\int_Q \theta^2\varphi|\nabla\xi|^2\,dx\,dt+\lambda\mathbb{E}\int_\Sigma \theta^2\varphi|\nabla_{\Gamma}\xi_{\Gamma}|^2 \,d\sigma\,dt,
	\end{align*}
	where $(\xi,\xi_{\Gamma})$ (resp., $(\xi,\xi_{\Gamma},\Xi,\widehat{\Xi})$) is the solution of a suitable  forward (resp., backward) stochastic parabolic equation with dynamic boundary conditions. Let us now introduce the following forward stochastic parabolic equation
	\begin{equation}\label{eqqgfr}
		\begin{cases}
			\begin{array}{ll}
				dz - \Delta z \,dt = f \,dt + g\,dW(t)&\textnormal{in}\,\,Q,\\
				dz_\Gamma-\Delta_\Gamma z_\Gamma \,dt+\partial_\nu z \,dt = f_{\Gamma} \,dt+ g_{\Gamma}  \,dW(t) &\textnormal{on}\,\,\Sigma,\\
				z_\Gamma(t,x)=z\vert_\Gamma(t,x) &\textnormal{on}\,\,\Sigma,\\
				(z,z_\Gamma)\vert_{t=0}=(z_0,z_{\Gamma,0}) &\textnormal{in}\,\,G\times\Gamma,
			\end{array}
		\end{cases}
	\end{equation}
	where $(z_0,z_{\Gamma,0})\in L^2_{\mathcal{F}_0}(\Omega;\mathbb{L}^2)$ is the initial state, $f, g\in L^2_\mathcal{F}(0,T;L^2(G))$ and $f_\Gamma,g_\Gamma\in L^2_\mathcal{F}(0,T;L^2(\Gamma))$ are some source terms. Now, we first recall the following Carleman estimate for equation \eqref{eqqgfr}. For the proof, we refer to \cite[Theorem 3.1]{elgrouDBC}.
	\begin{lm}
		There exist a large $\mu_1>1$ such that for all $\mu\geq\mu_1$, one can find constants $C>0$ and $\lambda_1>1$ depending only on $G$, $G_0$, $\mu$ and $T$ such that for all $\lambda\geq\lambda_1$, $f, g\in L^2_\mathcal{F}(0,T;L^2(G))$, $f_\Gamma, g_\Gamma\in L^2_\mathcal{F}(0,T;L^2(\Gamma))$, and $(z_0,z_{\Gamma,0})\in L^2_{\mathcal{F}_0}(\Omega;\mathbb{L}^2)$, the mild solution $(z,z_\Gamma)$ of \eqref{eqqgfr} satisfies 
		\begin{align}\label{carfor5.6}
			\begin{aligned}
				&\;\mathcal{I}(\lambda,\mu;z,z_\Gamma)\leq C \Bigg[ \lambda^3\mathbb{E}\int_{Q_0} \theta^2\varphi^3 z^2 \,dx\,dt+ \mathbb{E}\int_Q \theta^2f^2 \,dx\,dt+\mathbb{E}\int_\Sigma \theta^2 f_\Gamma^2 \,d\sigma dt\\
				&\hspace{2.8cm}+\lambda^2\mathbb{E}\int_Q \theta^2\varphi^2g^2 \,dx\,dt+\lambda^2\mathbb{E}\int_\Sigma \theta^2\varphi^2 g_\Gamma^2 \,d\sigma dt\Bigg]. 
		\end{aligned}\end{align}
	\end{lm}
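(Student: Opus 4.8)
The plan is to follow the classical global Carleman strategy of Fursikov--Imanuvilov, adapted to the stochastic framework via Itô's formula, and carried out simultaneously in the bulk $G$ and on the boundary manifold $\Gamma$ so as to handle the Wentzell coupling term $\partial_\nu z$.

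\emph{Step 1: weighted change of unknown and splitting.} Set $v=\theta z$ and $v_\Gamma=\theta z_\Gamma$. Applying Itô's formula to $d(\theta z)$ and using \eqref{eqqgfr}, one gets a forward stochastic equation of the form $dv+(\mathcal{L}_1 v)\,dt=(\mathcal{L}_2 v+\theta f+(\text{lower order}))\,dt+\theta g\,dW(t)$ in $Q$, together with the analogous surface equation for $v_\Gamma$ involving $\Delta_\Gamma$, the trace of $\partial_\nu(\theta^{-1}v)$ and $\theta g_\Gamma$; here $\mathcal{L}_1$ collects the formally self-adjoint second-order part $-\Delta v-\lambda^2\mu^2\varphi^2|\nabla\eta|^2 v+\dots$ and $\mathcal{L}_2$ the formally skew-adjoint first-order part $-2\lambda\mu\varphi\,\nabla\eta\cdot\nabla v-\dots$, the splitting being chosen exactly as in the deterministic case. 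Squaring, taking expectations, and integrating over $Q$ (resp.\ $\Sigma$), the cross term $2\mathbb{E}\int_Q \mathcal{L}_1 v\,\mathcal{L}_2 v\,dx\,dt$ produces, after several integrations by parts in $x$ and in $t$, the dominant positive quantities $\lambda^3\mu^4\mathbb{E}\int_Q\theta^2\varphi^3|\nabla\eta|^4 v^2$ and $\lambda\mu^2\mathbb{E}\int_Q\theta^2\varphi|\nabla\eta|^2|\nabla v|^2$, plus a collection of lower-order volume terms and, crucially, boundary integrals on $\Sigma$; the same computation on $\Gamma$ yields the surface counterparts $\lambda^3\mathbb{E}\int_\Sigma\theta^2\varphi^3 v_\Gamma^2$ and $\lambda\mathbb{E}\int_\Sigma\theta^2\varphi|\nabla_\Gamma v_\Gamma|^2$.

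\emph{Step 2: boundary terms and stochastic corrections.} This is the delicate part. The integrations by parts in the bulk leave trace integrals on $\Sigma$ involving $\partial_\nu v$, $v|_\Gamma$ and $\nabla_\Gamma v|_\Gamma$, which must be combined with the surface Carleman computation for $v_\Gamma$ and with the coupling $\partial_\nu z$. Here one uses that, for the function $\eta$ of Lemma \ref{lmm5.1} applied with $G'\Subset G_0$, one has $\nabla_\Gamma\eta=0$, $|\nabla\eta|=|\partial_\nu\eta|$ and $\partial_\nu\eta\le -c<0$ on $\Gamma$, and that $\alpha,\varphi$ are spatially constant on $\Gamma$; this forces the leading boundary integrals to carry the favorable sign and lets the remaining ones be absorbed. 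Separately, since $v=\theta z$ and $z$ solves an Itô equation, the quadratic variation contributes right-hand-side terms of the type $\lambda^2\mu^2\mathbb{E}\int_Q\theta^2\varphi^2 g^2$ and $\lambda^2\mu^2\mathbb{E}\int_\Sigma\theta^2\varphi^2 g_\Gamma^2$, while the genuine stochastic integrals have zero expectation and drop out; the time-boundary contributions at $t=0,T$ vanish because $\theta$ blows up there.

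\emph{Step 3: parameters and localization.} First fix $\mu\ge\mu_1$ large enough that $|\nabla\eta|\ge c>0$ on $\overline{G\setminus G'}$ dominates the lower-order volume terms there; then, for $\lambda\ge\lambda_1$ large, the remaining lower-order terms are absorbed, and undoing $v=\theta z$, $v_\Gamma=\theta z_\Gamma$ gives $\mathcal{I}(\lambda,\mu;z,z_\Gamma)\le C\big[\lambda^3\mathbb{E}\int_{(0,T)\times G'}\theta^2\varphi^3 z^2+\mathbb{E}\int_Q\theta^2 f^2+\mathbb{E}\int_\Sigma\theta^2 f_\Gamma^2+\lambda^2\mathbb{E}\int_Q\theta^2\varphi^2 g^2+\lambda^2\mathbb{E}\int_\Sigma\theta^2\varphi^2 g_\Gamma^2\big]$. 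Finally, since $G'\Subset G_0$, a standard Caccioppoli/cutoff argument (test \eqref{eqqgfr} against $\xi^2\theta^2\varphi z$ with $\xi$ a cutoff equal to $1$ on $G'$ and supported in $G_0$, apply Itô's formula, and estimate the resulting terms) replaces the local term $\lambda^3\int_{(0,T)\times G'}\theta^2\varphi^3 z^2$ by $\lambda^3\int_{Q_0}\theta^2\varphi^3 z^2$ up to the same source terms, yielding \eqref{carfor5.6}. By Theorem \ref{wellposedness1} and a density argument it suffices to prove the estimate for sufficiently regular data and solutions so that all the manipulations above are legitimate.

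\emph{Main obstacle.} The principal difficulty is the bookkeeping and sign analysis of the boundary integrals on $\Sigma$: unlike the Dirichlet/Neumann case, the surface carries its own second-order dynamics through $\Delta_\Gamma$, and the normal derivative $\partial_\nu z$ both appears as a coupling and is generated by the bulk integrations by parts, so that making every trace term either nonnegative or absorbable rests entirely on the geometric properties of $\eta$ in Lemma \ref{lmm5.1} ($\partial_\nu\eta\le -c<0$, $\nabla_\Gamma\eta=0$) and must be carried out with care. The stochastic corrections, by contrast, are routine once the deterministic weighted identity is in place.
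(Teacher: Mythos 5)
The paper does not actually prove this lemma: it is quoted from \cite[Theorem 3.1]{elgrouDBC}, and the text simply refers the reader there. Your outline follows exactly the strategy used in that reference and in the deterministic antecedents — conjugation $v=\theta z$, $v_\Gamma=\theta z_\Gamma$, splitting into formally self-adjoint and skew-adjoint parts, expansion of the cross term via Itô's formula, sign analysis of the trace integrals using $\nabla_\Gamma\eta=0$, $|\nabla\eta|=|\partial_\nu\eta|$ and $\partial_\nu\eta\le -c<0$ from Lemma \ref{lmm5.1}, then fixing $\mu$ and $\lambda$ large in that order — so the approach is the correct one and is consistent with the paper's source.

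Two caveats. First, as written this is a plan rather than a proof: the entire substance of a Carleman estimate lies in the weighted pointwise identity and in checking that \emph{every} member of the ``collection of lower-order volume terms and boundary integrals'' is either favorably signed or absorbable, and you assert this rather than carry it out. In particular, the interaction between the surface dynamics ($\Delta_\Gamma$) and the normal derivative $\partial_\nu z$ — which enters both as a coupling in the boundary equation of \eqref{eqqgfr} and as an output of the bulk integration by parts — is precisely the place where a genuinely new computation is required relative to the Dirichlet/Neumann case, and you only name the difficulty in your closing paragraph. Second, a small but real slip: the temporal boundary contributions at $t=0,T$ vanish because $\theta=e^{\lambda\alpha}\to 0$ there (the numerator of $\alpha$ is negative while $t(T-t)\to 0^{+}$), not because $\theta$ ``blows up''; if $\theta$ blew up those terms would not drop. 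Finally, the concluding Caccioppoli step is superfluous as stated: since $G'\Subset G_0$ and the local observation term produced in your Step 3 is already zeroth order in $z$, one has $\lambda^3\mathbb{E}\int_{(0,T)\times G'}\theta^2\varphi^3z^2\,dx\,dt\le\lambda^3\mathbb{E}\int_{Q_0}\theta^2\varphi^3z^2\,dx\,dt$ trivially; a cutoff argument is needed only if the local term surviving the weighted identity still involves $|\nabla z|^2$.
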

	On the other hand, we  consider the following backward stochastic parabolic equation
	\begin{equation}\label{eqqgbc}
		\begin{cases}
			dz+\Delta z\,dt=F \,dt+ Z \,dW(t) & \text { in }Q, \\ 
			d z_{\Gamma}+\Delta_\Gamma z_\Gamma \,dt-\partial_\nu z \,dt=F_{\Gamma} \,dt+ \widehat{Z}\,dW(t) & \text { on }\Sigma, \\ 
			z_{\Gamma}(t, x)=z|_\Gamma(t, x) & \text { on }\Sigma, \\
			(z, z_\Gamma)|_{t=T}=(z_T, z_{\Gamma, T}) & \text { in } G\times\Gamma,
		\end{cases}
	\end{equation}
	where $(z_T,z_{\Gamma,T})\in L^2_{\mathcal{F}_T}(\Omega;\mathbb{L}^2)$ is the terminal state, $F\in L^2_\mathcal{F}(0,T;L^2(G))$ and $F_\Gamma\in L^2_\mathcal{F}(0,T;L^2(\Gamma))$. From \cite[Theorem 4.1]{elgrouDBC}, we also recall the following Carleman estimate for equation \eqref{eqqgbc}.
	\begin{lm}
		There exist a large $\mu_2>1$ such that for $\mu\geq\mu_2$, one can find constants $C>0$ and $\lambda_2>1$ depending only on $G$, $G_0$, $\mu$ and $T$ such that for all $\lambda\geq\lambda_2$, $F\in L^2_\mathcal{F}(0,T;L^2(G))$, $F_\Gamma\in L^2_\mathcal{F}(0,T;L^2(\Gamma))$, and $(z_T,z_{\Gamma,T})\in L^2_{\mathcal{F}_T}(\Omega;\mathbb{L}^2)$, the mild solution $(z,z_\Gamma, Z, \widehat{Z})$ of \eqref{eqqgbc} satisfies
		\begin{align}\label{carback5.8}
			\begin{aligned}
				&\;\mathcal{I}(\lambda,\mu;z,z_\Gamma)\leq C \Bigg[ \lambda^3\mathbb{E}\int_{Q_0} \theta^2\varphi^3 z^2 \,dx\,dt+ \mathbb{E}\int_Q \theta^2 F^2 \,dx\,dt+\mathbb{E}\int_\Sigma \theta^2 F_\Gamma^2\, d\sigma dt\\
				&\hspace{2,8cm}+\lambda^2\mathbb{E}\int_Q \theta^2\varphi^2 Z^2 \,dx\,dt+\lambda^2\mathbb{E}\int_\Sigma \theta^2\varphi \widehat{Z}^2 \,d\sigma dt\Bigg]. 
			\end{aligned}
		\end{align}
	\end{lm}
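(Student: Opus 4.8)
The plan is to run the classical weighted-identity (global Carleman) method in its stochastic parabolic version, paying particular attention to the boundary terms generated by the dynamic boundary condition; the computation is the backward counterpart of the one behind \eqref{carfor5.6}. First I would fix $\ell=\lambda\alpha$, $\theta=e^{\ell}$ and pass to the weighted unknowns $v=\theta z$, $v_\Gamma=\theta z_\Gamma$. Because $\eta\equiv0$ on $\Gamma$, the restriction of $\theta$ to $\Sigma$ depends only on $t$; hence $\nabla_\Gamma\alpha=\nabla_\Gamma\varphi=0$ and the surface part of the computation simplifies. Using It\^o's formula together with $\nabla z=\theta^{-1}(\nabla v-v\nabla\ell)$ and $\Delta z=\theta^{-1}\big(\Delta v-2\nabla\ell\cdot\nabla v+(|\nabla\ell|^2-\Delta\ell)v\big)$, equation \eqref{eqqgbc} becomes a backward stochastic equation for $(v,v_\Gamma)$ whose drift I would split as $\mathcal{L}_1v+\mathcal{L}_2v=(\text{terms in }F,Z)$, with $\mathcal{L}_1$ gathering the self-adjoint part (the principal operator $\Delta v$ and the ``good'' potential $\lambda^2\mu^2\varphi^2|\nabla\eta|^2v$) and $\mathcal{L}_2$ the skew-adjoint part together with the $dv$ differential; a similar splitting is used for the Laplace--Beltrami equation satisfied by $v_\Gamma$.

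Next I would expand $\mathbb{E}\int_Q|\mathcal{L}_1v+\mathcal{L}_2v|^2$ and keep the cross term $2\,\mathbb{E}\int_Q\mathcal{L}_1v\,\mathcal{L}_2v$, integrating by parts in $x$ over $G$ and over $\Gamma$ and in $t$ by the backward It\^o formula --- here the terminal data at $t=T$ and the vanishing of $\theta$ at $t=0,T$ remove the time-boundary contributions. After the standard manipulations this produces the dominant positive quantities $\lambda^3\mu^4\,\mathbb{E}\int_Q\theta^2\varphi^3z^2$, $\lambda\mu^2\,\mathbb{E}\int_Q\theta^2\varphi|\nabla z|^2$ from the bulk and $\lambda^3\mu^4\,\mathbb{E}\int_\Sigma\theta^2\varphi^3z_\Gamma^2$, $\lambda\mu^2\,\mathbb{E}\int_\Sigma\theta^2\varphi|\nabla_\Gamma z_\Gamma|^2$ from the surface; the $dW$ terms, handled through It\^o, generate corrections controlled by $\lambda^2\,\mathbb{E}\int_Q\theta^2\varphi^2Z^2$ and $\lambda^2\,\mathbb{E}\int_\Sigma\theta^2\varphi\,\widehat Z^2$, which is precisely why these appear on the right of \eqref{carback5.8}. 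Taking $\mu$ large first to dominate all terms carrying too few powers of $\mu$, and then $\lambda$ large to absorb the remaining lower-order terms (while keeping $\mathbb{E}\int_Q\theta^2F^2$ and $\mathbb{E}\int_\Sigma\theta^2F_\Gamma^2$), one obtains a global estimate that controls $\mathcal{I}(\lambda,\mu;z,z_\Gamma)$ up to the contribution of the interior subregion $G'\Subset G_0$, where the potential $|\nabla\eta|^2$ degenerates.

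To supply the missing contribution of $G'$ and turn it into the observation term, I would add a localization step: choose $\zeta\in C^\infty_c(G_0)$ with $0\le\zeta\le1$ and $\zeta\equiv1$ on $G'$, apply It\^o's formula to $\zeta^2\theta^2\varphi^3z^2$, and use \eqref{eqqgbc} to bound $\lambda^3\,\mathbb{E}\int_{(0,T)\times G'}\theta^2\varphi^3z^2$ by $C\lambda^3\,\mathbb{E}\int_{Q_0}\theta^2\varphi^3z^2$ plus terms of the admissible form (Cauchy--Schwarz with a small parameter sending the $|\nabla z|^2$ and $Z^2$ contributions either back to the left-hand side or to the right-hand side of \eqref{carback5.8}). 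Undoing $v=\theta z$, $v_\Gamma=\theta z_\Gamma$ --- so that the weighted gradient terms are recovered modulo harmless $\lambda^2\mu^2\theta^2\varphi^3z^2$ corrections absorbed by the dominant term --- yields $\mathcal{I}(\lambda,\mu;z,z_\Gamma)$ on the left and exactly the right-hand side of \eqref{carback5.8}.

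The step I expect to be the main obstacle is the bookkeeping of the boundary integrals on $\Sigma$: the interior weighted identity throws out boundary terms involving $\partial_\nu v$, $\partial_\nu\eta$, $v_\Gamma$ and their tangential derivatives, which must be reconciled with the boundary terms produced by the surface Laplace--Beltrami computation and with the coupling term $\partial_\nu z$ in the dynamic boundary condition. What makes this closable are the structural properties of $\eta$ from Lemma \ref{lmm5.1}, namely $\eta|_\Gamma=0$, $\nabla_\Gamma\eta=0$, and above all the strict sign $\partial_\nu\eta\le-c<0$ on $\Gamma$: this sign forces the leading boundary term coming from $\int_\Gamma\partial_\nu v\,\partial_\nu\eta\,(\cdots)$ to point in the favorable direction, so that it can be discarded or made to reinforce the surface estimate, and a careful tuning of the large parameters and of the relative weights of the bulk and surface contributions then closes the argument.
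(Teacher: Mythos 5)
The paper does not actually prove this lemma: it is imported verbatim from \cite[Theorem 4.1]{elgrouDBC}, just as the forward estimate \eqref{carfor5.6} is imported from \cite[Theorem 3.1]{elgrouDBC}, so there is no in-paper argument to compare yours against line by line. What your sketch does is reconstruct, in outline, the standard weighted-identity proof that the cited reference carries out, and the outline is the right one: the change of unknown $v=\theta z$, $v_\Gamma=\theta z_\Gamma$; the splitting of the transformed drift into symmetric and antisymmetric parts; the cross-term computation organised through the backward It\^o formula, which is exactly where the quadratic variation $(dz)^2=Z^2\,dt$ enters and produces the $\lambda^2\varphi^2Z^2$ and $\lambda^2\varphi\,\widehat Z^2$ contributions on the right of \eqref{carback5.8} (you matched the differing powers of $\varphi$ in the bulk and on the surface correctly); and the use of the structural properties $\eta|_\Gamma=0$, $\nabla_\Gamma\eta=0$, $\partial_\nu\eta\le-c<0$ from Lemma \ref{lmm5.1} to give the boundary terms generated by the coupling $\partial_\nu z$ a favourable sign. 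Two points of bookkeeping deserve correction. First, your localisation step is aimed at the wrong term: the zeroth-order local contribution $\lambda^3\,\mathbb{E}\int_{(0,T)\times G'}\theta^2\varphi^3z^2$ is dominated by the $Q_0$-integral trivially because $G'\Subset G_0$; the cutoff argument with $\zeta$ is needed to eliminate the local \emph{gradient} term $\lambda\mu^2\,\mathbb{E}\int_{(0,T)\times G'}\theta^2\varphi|\nabla z|^2$ that the weighted identity leaves unabsorbed, at the price of the admissible $Z^2$ and $z^2$ terms. Second, in the stochastic setting one cannot literally ``integrate by parts in $t$'': every time integration must be packaged as the It\^o differential of an adapted product (as you do elsewhere), since a naive integration by parts in time would destroy adaptedness. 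Neither point invalidates the strategy; a complete proof along these lines is long but standard, and the paper's legitimate choice is simply to quote the result.
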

	By combining Carleman estimates \eqref{carfor5.6} and \eqref{carback5.8}, we  will prove the following Carleman estimate for solutions of  the coupled forward-backward system \eqref{ADJSO1}.
	\begin{lm}\label{thmm5.1} 
		Let $((\phi,\phi_{\Gamma},\Phi, \widehat{\Phi}), \Psi^1, \Psi^2)$ be the solution of \eqref{ADJSO1} and assume that \eqref{Assump10} holds. Then, there exist a large $\mu_3>1$ such that for $\mu=\mu_3$, one can find constants $C>0$ and $\lambda_3>1$ depending only on $G$, $G_0$, $\mu_3$ and $T$ such that 
		\begin{align}\label{Carlem5.9}
			\begin{aligned}
				\mathcal{I}(\lambda,\mu; \phi, \phi_{\Gamma})+ \mathcal{I}(\lambda,\mu; h, h_{\Gamma}) \leq &\,\,C \Bigg[\lambda^7  \mathbb{E}\int_{Q_0} \theta^2 \varphi^7 \phi^2 \, \,dx\,dt+\lambda^2 \mathbb{E}\int_{Q}\theta^2\varphi^2\Phi^2\,dx\,dt\\
				&\quad\;\;+\lambda^2 \mathbb{E}\int_{\Sigma}\theta^2\varphi\widehat{\Phi}^2 \,d\sigma\,dt\Bigg],
			\end{aligned}
		\end{align}
		for all $\lambda\geq\lambda_3$, where $(h, h_{\Gamma})= \alpha_1\Psi^1+\alpha_2\Psi^2=\alpha_1(\psi^1, \psi^1_{\Gamma})+\alpha_2(\psi^2, \psi^2_{\Gamma})$.
	\end{lm}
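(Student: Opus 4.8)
The plan is to couple the two Carleman estimates \eqref{carfor5.6} and \eqref{carback5.8}, and then to absorb the spurious local term in $h$ by means of assumption \eqref{Assump10}.

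\textbf{Step 1 (reformulation).} First I would observe that $(h,h_\Gamma)=\alpha_1(\psi^1,\psi^1_\Gamma)+\alpha_2(\psi^2,\psi^2_\Gamma)$ solves a forward system of the form \eqref{eqqgfr}, with $f=a_1h+\big(\tfrac{\alpha_1}{\beta_1}\chi_{G_1}+\tfrac{\alpha_2}{\beta_2}\chi_{G_2}\big)\phi$, $g=a_2h$, $f_\Gamma=b_1h_\Gamma$, $g_\Gamma=b_2h_\Gamma$ and zero initial datum, whereas $\big((\phi,\phi_\Gamma),\Phi,\widehat\Phi\big)$ solves a backward system of the form \eqref{eqqgbc} with $F=-a_1\phi-a_2\Phi+h\chi_{G_d}$, $F_\Gamma=-b_1\phi_\Gamma-b_2\widehat\Phi$ (here I use \eqref{Assump10}, i.e. $G_{1,d}=G_{2,d}=G_d$, so that $\alpha_1\psi^1\chi_{G_{1,d}}+\alpha_2\psi^2\chi_{G_{2,d}}=h\chi_{G_d}$). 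Since $G_0\cap G_d\neq\emptyset$, I would also fix the open set $G'$ underlying the weight functions (Lemma \ref{lmm5.1}) so that $G'\Subset G_0\cap G_d$; with this common weight the estimates \eqref{carfor5.6}, \eqref{carback5.8} still hold, and inspecting their proofs one may take the local observation term for $(h,h_\Gamma)$ over a fixed open set $\omega_1$ with $\overline{G'}\subset\omega_1\Subset G_0\cap G_d$.

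\textbf{Step 2 (adding the two Carleman estimates).} Applying \eqref{carfor5.6} to $(h,h_\Gamma)$ and \eqref{carback5.8} to $\big((\phi,\phi_\Gamma),\Phi,\widehat\Phi\big)$ and summing, every term carrying a factor $a_ih$, $b_ih_\Gamma$, $a_i\phi$ or $b_i\phi_\Gamma$, as well as $\mathbb{E}\int_Q\theta^2h^2\chi_{G_d}$ and $\mathbb{E}\int_Q\theta^2\big(\tfrac{\alpha_1}{\beta_1}\chi_{G_1}+\tfrac{\alpha_2}{\beta_2}\chi_{G_2}\big)^2\phi^2$, is absorbed into $\mathcal{I}(\lambda,\mu;\phi,\phi_\Gamma)+\mathcal{I}(\lambda,\mu;h,h_\Gamma)$ for $\lambda$ large (using $\varphi\geq C$ and the $L^\infty$-bounds on the coefficients), while the $\Phi$ and $\widehat\Phi$ contributions are all dominated by $\lambda^2\mathbb{E}\int_Q\theta^2\varphi^2\Phi^2+\lambda^2\mathbb{E}\int_\Sigma\theta^2\varphi\widehat\Phi^2$. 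This yields, for $\mu=\mu_3$ (which we may take to be $\max\{\mu_1,\mu_2\}$) and $\lambda$ large,
\begin{align*}
\mathcal{I}(\lambda,\mu;\phi,\phi_\Gamma)+\mathcal{I}(\lambda,\mu;h,h_\Gamma)\leq {}& C\Big[\lambda^3\mathbb{E}\int_{Q_0}\theta^2\varphi^3\phi^2\,dx\,dt+\lambda^3\mathbb{E}\int_{(0,T)\times\omega_1}\theta^2\varphi^3h^2\,dx\,dt\\
&+\lambda^2\mathbb{E}\int_Q\theta^2\varphi^2\Phi^2\,dx\,dt+\lambda^2\mathbb{E}\int_\Sigma\theta^2\varphi\widehat\Phi^2\,d\sigma\,dt\Big].
\end{align*}

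\textbf{Step 3 (killing the local $h$-term — the crucial point).} Since $\omega_1\subset G_d$, on $\omega_1$ the bulk equation for $\phi$ reads $d\phi=[-\Delta\phi-a_1\phi-a_2\Phi+h]\,dt+\Phi\,dW$. Fixing $\zeta\in C_c^\infty(G_0\cap G_d)$ with $0\leq\zeta\leq1$ and $\zeta\equiv1$ on $\omega_1$, I would apply It\^o's formula to $d\big(\lambda^3\zeta\theta^2\varphi^3h\,\phi\big)$ and integrate over $(0,T)\times G$: the endpoint contributions vanish because $\theta(0)=\theta(T)=0$ and $h(0)=0$. Two cancellations are decisive: the term $-\lambda^3\zeta\theta^2\varphi^3 a_2h\Phi$ produced by $h\,d\phi$ is cancelled by the quadratic covariation of $\lambda^3\zeta\theta^2\varphi^3h$ and $\phi$, which equals $\lambda^3\zeta\theta^2\varphi^3 a_2h\Phi\,dt$; and the two first-order terms in $a_1$ cancel as well — so \emph{no} $\Phi$ survives at this stage. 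After integrating by parts in space (no boundary terms, $\zeta$ being compactly supported in $G$) and using the Green-type identity $\int_G\zeta\theta^2\varphi^3(h\Delta\phi-\phi\Delta h)\,dx=\int_G\nabla(\zeta\theta^2\varphi^3)\cdot(\phi\nabla h-h\nabla\phi)\,dx$, I would bound the resulting terms $\lambda^4\theta^2\varphi^4|\phi||\nabla h|$, $\lambda^4\theta^2\varphi^4|h||\nabla\phi|$, $\lambda^4\theta^2\varphi^5|\phi||h|$ and $\lambda^3\theta^2\varphi^3\phi^2$ by Young's inequality, sending the $|\nabla h|^2$ and $h^2$ pieces into $\mathcal{I}(\lambda,\mu;h,h_\Gamma)$ and the $\phi^2$ pieces into a $\lambda^7\theta^2\varphi^7\phi^2$ local term; this is what forces the power $\varphi^7$. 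One is left with a local gradient term, i.e.
\begin{align*}
\lambda^3\mathbb{E}\int_{(0,T)\times\omega_1}\theta^2\varphi^3h^2\,dx\,dt\leq {}& \varepsilon\big[\mathcal{I}(\lambda,\mu;\phi,\phi_\Gamma)+\mathcal{I}(\lambda,\mu;h,h_\Gamma)\big]+C_\varepsilon\lambda^7\mathbb{E}\int_{Q_0}\theta^2\varphi^7\phi^2\,dx\,dt\\
&+C_\varepsilon\lambda^5\mathbb{E}\int_{(0,T)\times\omega'}\theta^2\varphi^5|\nabla\phi|^2\,dx\,dt,
\end{align*}
for a fixed open $\omega'$ with $\omega_1\Subset\omega'\Subset G_0\cap G_d$. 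The local gradient term is then removed by a Caccioppoli-type argument: with $\widetilde\zeta\in C_c^\infty(G_0\cap G_d)$, $\widetilde\zeta\equiv1$ on $\omega'$, one writes $\lambda^5\mathbb{E}\int_Q\widetilde\zeta\theta^2\varphi^5|\nabla\phi|^2=-\lambda^5\mathbb{E}\int_Q\phi\operatorname{div}\big(\widetilde\zeta\theta^2\varphi^5\nabla\phi\big)$, substitutes $\Delta\phi$ from the $\phi$-equation (valid on $\operatorname{supp}\widetilde\zeta\subset G_d$), and applies It\^o's formula to $\mathbb{E}\int_Q\widetilde\zeta\theta^2\varphi^5\phi\,d\phi$. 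The latter produces $\tfrac{\lambda^5}{2}\mathbb{E}\int_Q\widetilde\zeta\theta^2\varphi^5\Phi^2$ with the \emph{favourable} sign — on the same side as $\lambda^5\mathbb{E}\int_Q\widetilde\zeta\theta^2\varphi^5|\nabla\phi|^2$ — so the cross term $\lambda^5\theta^2\varphi^5|\phi||\Phi|$ is absorbed into it; combined with $|\partial_t(\theta^2\varphi^5)|\leq C\lambda\theta^2\varphi^7$ this gives $\lambda^5\mathbb{E}\int_Q\widetilde\zeta\theta^2\varphi^5|\nabla\phi|^2\,dx\,dt\leq\varepsilon\,\mathcal{I}(\lambda,\mu;h,h_\Gamma)+C_\varepsilon\lambda^7\mathbb{E}\int_{Q_0}\theta^2\varphi^7\phi^2\,dx\,dt$.

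\textbf{Step 4 (conclusion).} Inserting the last two estimates into Step 2, choosing $\varepsilon$ small enough to absorb $\varepsilon[\mathcal{I}(\lambda,\mu;\phi,\phi_\Gamma)+\mathcal{I}(\lambda,\mu;h,h_\Gamma)]$ into the left-hand side, using $\lambda^3\theta^2\varphi^3\leq\lambda^7\theta^2\varphi^7$ for $\lambda$ large, and finally fixing $\mu_3$ and taking $\lambda_3$ large, one arrives at \eqref{Carlem5.9}. (As usual, the It\^o computations should first be carried out for smooth data and then passed to the limit using the well-posedness estimates.) The hard part is exactly Step 3: one must trade the local $h$-term against $\phi$ through the coupling term $h\chi_{G_d}$ of the backward equation — which is precisely why \eqref{Assump10} is needed — while carefully tracking the powers of $\lambda$ and $\varphi$ and making sure that every $\Phi$-contribution generated along the way either cancels or carries a sign that keeps the right-hand side $\Phi$-term at the level $\lambda^2\theta^2\varphi^2$ dictated by the backward Carleman estimate.
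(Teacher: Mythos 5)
Your proposal is correct and follows the paper's strategy almost step for step: sum the forward Carleman estimate \eqref{carfor5.6} for $(h,h_\Gamma)$ and the backward one \eqref{carback5.8} for $(\phi,\phi_\Gamma,\Phi,\widehat\Phi)$ to reach the intermediate inequality \eqref{firsine1}, then trade the local term $\lambda^3\mathbb{E}\int\theta^2\varphi^3h^2$ for a local $\phi$-term by applying It\^o's formula to $\lambda^3\zeta\theta^2\varphi^3h\phi$ with a cutoff $\zeta$ supported in $G_0\cap G_d$ (this is exactly where \eqref{Assump10} enters), exploiting the same cancellations of the $a_1$- and $a_2\Phi$-terms that the paper uses in deriving \eqref{integ11}.

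The one place where you diverge is the treatment of the second-order terms $\mathbb{E}\int\zeta\theta^2\varphi^3(h\Delta\phi-\phi\Delta h)$. You use the antisymmetric Green identity, which leaves a term $\lambda^4\theta^2\varphi^4|h||\nabla\phi|$ and hence, after Young, a local weighted gradient term $\lambda^5\mathbb{E}\int\widetilde\zeta\,\theta^2\varphi^5|\nabla\phi|^2$ that you must then kill with a separate Caccioppoli-type argument (itself requiring a second It\^o computation and a cutoff satisfying a condition of the type $|\nabla\widetilde\zeta|^2/\widetilde\zeta\leq C$, analogous to \eqref{assmzeta}, which you should state explicitly). The paper avoids this entirely: in \eqref{ine5.144} it integrates by parts so that \emph{all} derivatives land on $h$ and on the weight, producing only terms in $|\nabla h|$; the resulting $\varepsilon\lambda\mathbb{E}\int\theta^2\varphi|\nabla h|^2$ is absorbed directly into $\mathcal{I}(\lambda,\mu;h,h_\Gamma)$ (see \eqref{Abss11}), and no Caccioppoli lemma is needed. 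Your route is workable — the favourable sign of the $\Phi^2$ term in the Caccioppoli step is real — but it costs an extra lemma and extra bookkeeping for no gain; the asymmetric integration by parts is the shorter path to \eqref{Carlem5.9}.
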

	\begin{proof}
		From Lemma \ref{lmm5.1}, we choose the set $G'$ so that 
		\begin{equation*}\label{Eq31D}
			G'\subset G'_1\subset\subset G_0\cap G_d,
		\end{equation*}
		where $G'_1$ is another arbitrary open subset and we consider $\zeta\in C^{\infty}_0(G)$ such that 
		\begin{equation}\label{assmzeta}
			0\leq\zeta\leq 1,\,\quad \zeta =1 \,\, \text{in}\,\, G'\quad \text{and}\,\quad \text{supp}(\zeta)\subset G'_1,\quad |\Delta\zeta|\leq C\zeta,\;\;|\nabla\zeta|\leq C\zeta.
		\end{equation}
		
		Using the system satisfied by $(h,h_\Gamma)$ and apply the Carleman estimate \eqref{carfor5.6}, we conclude that there exist a large $\mu_1>1$ such that for $\mu\geq\mu_1$, one can find constants $C>0$ and a large enough $\lambda_1>1$ so that for all $\lambda\geq\lambda_1$, we obtain
		\begin{align*}
			\mathcal{I}(\lambda,\mu; h, h_{\Gamma})  &\leq C\Bigg[\lambda^3 \mathbb{E}\int_0^T\int_{G'} \theta^2\varphi^3 h^2 \, dx\,dt+\mathbb{E}\int_{Q}\theta^2\Big|\frac{\alpha_1}{\beta_1}\chi_{G_1}(x)\phi+\frac{\alpha_2}{\beta_2}\chi_{G_2}(x)\phi\Big|^2dx\,dt\Bigg].
		\end{align*}
		Then, it follows that
		\begin{align}\label{Car4.13}
			\mathcal{I}(\lambda,\mu; h, h_{\Gamma})\leq C\Bigg[\lambda^3\mathbb{E}\int_0^T\int_{G'} \theta^2\varphi^3 h^2 \, dx\,dt
			+ \mathbb{E}\int_{Q} \theta^2\phi^2\, dx\,dt\Bigg].
		\end{align}
		On the other hand, using the Carleman estimate \eqref{carback5.8}, we deduce that there exists $\mu_2>1$ such that for $\mu\geq\mu_2$, one can find constants $C>0$ and a large enough $\lambda_2>1$ so that for $\lambda\geq \lambda_2$ 
		\begin{align}\label{car4.14}
			\begin{aligned}
				\mathcal{I}(\lambda,\mu; \phi, \phi_{\Gamma})  &\leq C\Bigg[\lambda^3  \mathbb{E}\int_0^T\int_{G'} \theta^2\varphi^3 \phi^2 \, dx\,dt+\mathbb{E}\int_{Q} \theta^2 h^2 \, dx\,dt+\lambda^2\mathbb{E}\int_{Q} \theta^2\varphi^2 \Phi^2\,dx\,dt\\
				&\quad\quad\;\;+\lambda^2\mathbb{E}\int_{\Sigma} \theta^2\varphi\widehat{\Phi}^2\,d\sigma\,dt\Bigg].
			\end{aligned}
		\end{align}
		Combining \eqref{Car4.13} and \eqref{car4.14} and choosing  $\mu=\mu_3=\max(\mu_1,\mu_2)$ and a large enough $\lambda\geq1$, we get
		\begin{align}\label{firsine1}
			\begin{aligned}
				\mathcal{I}(\lambda,\mu;\phi, \phi_{\Gamma}) +\mathcal{I}(\lambda,\mu; h, h_{\Gamma})  &\leq C\Bigg[ \lambda^3 \mathbb{E}\int_0^T\int_{G'} \theta^2\varphi^3 \phi^2 \,dx\,dt +\lambda^3 \mathbb{E}\int_0^T\int_{G'} \theta^2\varphi^3 h^2 \, dx\,dt\\
				&\quad\;\quad+\lambda^2\mathbb{E}\int_{Q}\theta^2\varphi^2 \Phi^2\,dx\,dt+\lambda^2\mathbb{E}\int_{\Sigma}\theta^2\varphi\widehat{\Phi}^2\,d\sigma\,dt\Bigg].
			\end{aligned}
		\end{align}
		In the sequel, we shall give an estimate for ``$\displaystyle\lambda^3 \mathbb{E}\int_0^T\int_{G'} \theta^2\varphi^3 h^2 \, dx\,dt$''. To this end, using Itô's formula, we compute $d(\lambda^3\zeta \theta^2\varphi^3 h\phi )$. Integrating the obtained identity over $Q$ and taking the mathematical expectation on both sides, we obtain that
		\begin{align*}
			\begin{aligned}
				0&=\mathbb{E}\int_Q d(\lambda^3\zeta \theta^2\varphi^3 h\phi) \,dx\,dt\\
				&=\mathbb{E}\int_Q \lambda^3\zeta\theta^2\varphi^3\Bigg\{\phi\Big[\Delta h+a_1h+\Big(\frac{\alpha_1}{\beta_1}\chi_{G_1}+\frac{\alpha_2}{\beta_2}\chi_{G_2}\Big)\phi\Big]+a_2h\Phi\\
				&\quad\quad\;+h\big[-\Delta\phi-a_1\phi-a_2\Phi+\alpha_1\psi^1\chi_{G_d}+\alpha_2\psi^2\chi_{G_d}\big]\Bigg\}\, dx\,dt\\
				&\quad+\mathbb{E}\int_Q (\lambda^3\zeta\theta^2\varphi^3)_th\phi \,dx\,dt\\
				&=\mathbb{E}\int_Q \lambda^3\zeta\theta^2\varphi^3\Bigg\{\phi\Big[\Delta h+\Big(\frac{\alpha_1}{\beta_1}\chi_{G_1}+\frac{\alpha_2}{\beta_2}\chi_{G_2}\Big)\phi\Big]+h\big[-\Delta\phi+h\chi_{G_d}\big]\Bigg\}\, dx\,dt\\
				&\quad+\mathbb{E}\int_Q (\lambda^3\zeta\theta^2\varphi^3)_th\phi \,dx\,dt.
			\end{aligned}
		\end{align*}
		Then, it follows that 
		\begin{align}\label{integ11}
			\begin{aligned}
				\lambda^3\mathbb{E}\int_0^T\int_{G'_1} \zeta \theta^2\varphi^3 h^2 \,dx\,dt &=\lambda^3\mathbb{E}\int_Q \zeta \theta^2\varphi^3\Big(\frac{\alpha_1}{\beta_1}\chi_{G_1}+\frac{\alpha_2}{\beta_2}\chi_{G_2}\Big)\phi^2 \,dx\,dt-\lambda^3\mathbb{E}\int_Q \zeta(\theta^2\varphi^3)_t h\phi \,dx\,dt\\
				&\quad\;-\lambda^3\mathbb{E}\int_Q \zeta \theta^2\varphi^3\phi \Delta h \,dx\,dt+\lambda^3\mathbb{E}\int_Q \zeta \theta^2\varphi^3h\Delta\phi \,dx\,dt\\
				&:=I_1+I_2+I_3+I_4.
			\end{aligned}
		\end{align}
		Now, let us estimate $I_i$ ($i=1,2,3,4$).
		\begin{itemize}
			\item For $I_1$: it easy to see that
			\begin{align}\label{Int2}
				I_1\leq C\lambda^3\mathbb{E}\int_{Q_0}\theta^2\varphi^3\phi^2 \,dx\,dt.
			\end{align}
			\item For $I_2$: note that for a large enough $\lambda$, we have
			$$|(\theta^2\varphi^3)_t|\leq C\lambda\theta^2\varphi^5.$$
			Then, by Young's inequality, it follows that for any $\varepsilon>0$
			\begin{align}\label{Int4}
				\begin{aligned}
					I_2&\leq C\lambda^4\mathbb{E}\int_0^T\int_{G_1'} \zeta\theta^2\varphi^5|h||\phi| \,dx\,dt\\
					&\leq \varepsilon\lambda^3\mathbb{E}\int_0^T\int_{G_1'}\zeta \theta^2\varphi^3h^2 \,dx\,dt+\frac{C}{\varepsilon}\lambda^5\mathbb{E}\int_{Q_0} \theta^2\varphi^7\phi^2 \,dx\,dt.
				\end{aligned}
			\end{align}
			\item For $I_3+I_4$: by integration by parts and \eqref{assmzeta}, we find that
			\begin{align}\label{ine5.144}
				\begin{aligned}
					I_3+I_4 &= \lambda^3\mathbb{E}\int_Q \Delta(\zeta\theta^2\varphi^3)h\phi\,dx\,dt + 2 \lambda^3\mathbb{E}\int_Q \phi\nabla(\zeta\theta^2\varphi^3)\cdot\nabla h \,dx\,dt\\
					&\leq C\lambda^3\mathbb{E}\int_0^T\int_{G_1'} \zeta\theta^2\varphi^3|h||\phi|\,dx\,dt+C\lambda^3\mathbb{E}\int_0^T\int_{G_1'} |\Delta(\theta^2\varphi^3)|\zeta|h||\phi|\,dx\,dt\\
					&\quad\,+C\lambda^3\mathbb{E}\int_0^T\int_{G_1'} |\nabla(\theta^2\varphi^3)|\zeta|h||\phi|\,dx\,dt+C\lambda^3\mathbb{E}\int_0^T\int_{G_1'} \theta^2\varphi^3|\phi||\nabla h|\,dx\,dt\\
					&\quad\,+C\lambda^3\mathbb{E}\int_0^T\int_{G_1'} |\nabla(\theta^2\varphi^3)||\phi||\nabla h|\,dx\,dt.
				\end{aligned}
			\end{align}
			On the other hand, it is easy to see that for a large enough $\lambda$, we have
			\begin{align}\label{5.15in}
				|\nabla(\theta^2\varphi^3)|\leq C\lambda\theta^2\varphi^4\,,\quad\;|\Delta(\theta^2\varphi^3)|\leq C\lambda^2\theta^2\varphi^5.
			\end{align}
			Using \eqref{5.15in} in \eqref{ine5.144}, then we conclude that
			\begin{align}\label{ine5.16}
				\begin{aligned}
					I_3+I_4&\leq C\lambda^3\mathbb{E}\int_0^T\int_{G_1'} \zeta\theta^2\varphi^3|h||\phi|\,dx\,dt+C\lambda^5\mathbb{E}\int_0^T\int_{G_1'} \zeta\theta^2\varphi^5|h||\phi|\,dx\,dt\\
					&\quad+C\lambda^4\mathbb{E}\int_0^T\int_{G_1'} \zeta\theta^2\varphi^4|h||\phi|\,dx\,dt+C\lambda^3\mathbb{E}\int_0^T\int_{G_1'} \theta^2\varphi^3|\phi||\nabla h|\,dx\,dt\\
					&\quad+C\lambda^4\mathbb{E}\int_0^T\int_{G_1'} \theta^2\varphi^4|\phi||\nabla h|\,dx\,dt. 
				\end{aligned}
			\end{align}
			Applying Young's inequality in  the right-hand side of \eqref{ine5.16}, we obtain for any $\varepsilon>0$
			\begin{align}\label{inti13}
				\begin{aligned}
					I_3+I_4&\leq\varepsilon\lambda^3\mathbb{E}\int_0^T\int_{G_1'} \zeta\theta^2\varphi^3 h^2 dx\,dt+\frac{C}{\varepsilon}\lambda^3\mathbb{E}\int_{Q_0} \theta^2\varphi^3 \phi^2 dx\,dt\\
					&\quad\,+\frac{C}{\varepsilon}\lambda^7\mathbb{E}\int_{Q_0} \theta^2\varphi^7 \phi^2 dx\,dt+\frac{C}{\varepsilon}\lambda^5\mathbb{E}\int_{Q_0} \theta^2\varphi^5 \phi^2 dx\,dt.\\
					& \quad\,+\varepsilon\lambda\mathbb{E}\int_0^T\int_{G_1'} \theta^2\varphi |\nabla h|^2 dx\,dt.
				\end{aligned}
			\end{align}
			From \eqref{integ11}, \eqref{Int2}, \eqref{Int4} and \eqref{inti13}, we conclude that for a large enough $\lambda$
			\begin{align}\label{Abss1}
				\begin{aligned}
					\lambda^3\mathbb{E}\int_0^T\int_{G_1'} \zeta\theta^2\varphi^3h^2 \,dx\,dt&\leq 2\varepsilon \lambda^3\mathbb{E}\int_0^T\int_{G_1'} \zeta\theta^2\varphi^3h^2 \,dx\,dt+\frac{C}{\varepsilon}\lambda^7\mathbb{E}\int_{Q_0}\theta^2\varphi^7\phi^2 \,dx\,dt\\ 
					&\quad\,+\varepsilon\lambda\mathbb{E}\int_0^T\int_{G_1'} \theta^2\varphi |\nabla h|^2 \,dx\,dt.
				\end{aligned}
			\end{align}
		\end{itemize}
		Therefore from \eqref{Abss1}, we have that for any $\varepsilon>0$
		\begin{align}\label{Abss11}
			\begin{aligned}
				\lambda^3\mathbb{E}\int_0^T\int_{G'} \theta^2\varphi^3h^2 \,dx\,dt&\leq 2\varepsilon \lambda^3\mathbb{E}\int_Q \theta^2\varphi^3h^2 \,dx\,dt+\frac{C}{\varepsilon}\lambda^7\mathbb{E}\int_{Q_0}\theta^2\varphi^7\phi^2 \,dx\,dt\\ 
				&\quad\,+\varepsilon\lambda\mathbb{E}\int_Q \theta^2\varphi |\nabla h|^2 \,dx\,dt.
			\end{aligned}
		\end{align}
		Now, combining \eqref{firsine1} and \eqref{Abss11} and choosing  a small enough $\varepsilon>0$, we end up with
		\begin{align}\label{in5.211}
			\begin{aligned}
				\mathcal{I}(\lambda,\mu;\phi, \phi_{\Gamma}) +\mathcal{I}(\lambda,\mu; h, h_{\Gamma})  &\leq C\Bigg[ \lambda^3 \mathbb{E}\int_{Q_0} \theta^2\varphi^3 \phi^2 \,dx\,dt +\lambda^7 \mathbb{E}\int_{Q_0} \theta^2\varphi^7 \phi^2 \, dx\,dt\\
				&\quad\;\;\quad+\lambda^2\mathbb{E}\int_{Q}\theta^2\varphi^2 \Phi^2\,dx\,dt+\lambda^2\mathbb{E}\int_{\Sigma}\theta^2\varphi\widehat{\Phi}^2\,d\sigma\,dt\Bigg].
			\end{aligned}
		\end{align}
		Finally, by taking a large enough $\lambda$ in \eqref{in5.211}, we deduce the desired Carleman estimate \eqref{Carlem5.9}. This concludes the proof of Lemma \ref{thmm5.1}.
	\end{proof}
	
	To take into account the presence of  the source terms $y_{d,1}$ and $y_{d,2}$ in the optimality system \eqref{eqq4.7}, we need an improved  Carleman inequality. To this end,  we introduce the modified weight functions
	\begin{equation*}\label{eq:adjoint-system} \ell(t)=\begin{cases}
			\begin{array}{ll}
				T^2/4 &\textnormal{if}\;\; t\in(0,T/2),\\
				t(T-t) &\textnormal{if}\;\; t\in (T/2,T),
			\end{array}
		\end{cases}
	\end{equation*}
	and 
	\begin{align}\label{rec1}
		&\,\overline{\alpha}=\overline{\alpha}(t,x) = \frac{e^{\mu\eta(x)}-e^{2\mu\vert\eta\vert_\infty}}{\ell(t)},\qquad \overline{\varphi}=\overline{\varphi}(t,x) =\frac{e^{\mu\eta(x)}}{\ell(t)},\qquad\overline{\theta}=e^{\lambda\overline{\alpha}}.
	\end{align}
	We also denote by
	\begin{equation}\label{rec2}
		\overline{\alpha}^{\star}(t)=\min_{x\in\overline{G}}\overline{\alpha}(t,x),  \quad \text{and} \quad  	\overline{\varphi}^{\star}(t)=\max_{x\in\overline{G}}\overline{\varphi}(t,x).
	\end{equation}
	Set 
	\begin{align*}
		\overline{\mathcal{I}}(\lambda,\mu; \xi, \xi_{\Gamma})& = \mathbb{E}\int_Q\overline{\theta}^2\overline{\varphi}^3 \xi^2\,dx\,dt+\mathbb{E}\int_\Sigma\overline{\theta}^2\overline{\varphi}^3 \xi_\Gamma^2\,d\sigma dt.
	\end{align*}
	
	We now provide the following main improved  Carleman estimate for system \eqref{ADJSO1}.
	\begin{thm}\label{lem4.5st} Let $((\phi,\phi_{\Gamma},\Phi, \widehat{\Phi}), \Psi^1, \Psi^2)$ be the solution of \eqref{ADJSO1} and assume that \eqref{Assump10} holds. Then, for $\mu=\mu_3$ given in Lemma \ref{thmm5.1}, there exist constants $C>0$ and $\lambda_4>1$ depending only on $G$, $G_0$, $\mu_3$ and $T$ such that for all $\lambda\geq\lambda_4$
		\begin{align}\label{improvedCarl}
			\begin{aligned}
				&\,\mathbb{E} |\phi(0)|^2_{L^2(G)}+ \mathbb{E}|\phi_{\Gamma}(0)|^2_{L^2(\Gamma)}+ \overline{\mathcal{I}}(\lambda,\mu; \phi, \phi_{\Gamma})+ \mathcal{I}(\lambda,\mu; h, h_{\Gamma})\\
				&\leq C \Bigg[\lambda^7  \mathbb{E}\int_{Q_0} \theta^2 \varphi^7 \phi^2 \, \,dx\,dt+\lambda^2 \mathbb{E}\int_{Q}\theta^2\varphi^2\Phi^2\,dx\,dt+\lambda^2 \mathbb{E}\int_{\Sigma}\theta^2\varphi\widehat{\Phi}^2 \,d\sigma\,dt\Bigg],
			\end{aligned}
		\end{align}
		where $(h, h_{\Gamma})= \alpha_1\Psi^1+\alpha_2\Psi^2=\alpha_1(\psi^1, \psi^1_{\Gamma})+\alpha_2(\psi^2, \psi^2_{\Gamma})$.
	\end{thm}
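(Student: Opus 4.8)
Since the right-hand side of the Carleman estimate \eqref{Carlem5.9} already coincides with the right-hand side claimed in \eqref{improvedCarl}, it suffices to absorb the two extra left-hand side terms of \eqref{improvedCarl} — the $t=0$ energy $\mathbb{E}|\phi(0)|^2_{L^2(G)}+\mathbb{E}|\phi_\Gamma(0)|^2_{L^2(\Gamma)}$, and the replacement of $\mathcal{I}(\lambda,\mu;\phi,\phi_\Gamma)$ by $\overline{\mathcal{I}}(\lambda,\mu;\phi,\phi_\Gamma)$, whose weight no longer degenerates at $t=0$ — into $\mathcal{I}(\lambda,\mu;\phi,\phi_\Gamma)+\mathcal{I}(\lambda,\mu;h,h_\Gamma)$ plus the right-hand side of \eqref{Carlem5.9}. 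The extra information is gained from a localized energy estimate for the backward block $(\phi,\phi_\Gamma,\Phi,\widehat\Phi)$ of \eqref{ADJSO1} near $t=0$, fed by the interior bound contained in \eqref{Carlem5.9}.

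First I would introduce a cut-off $\gamma\in C^\infty([0,T])$ with $\gamma\equiv1$ on $[0,T/2]$ and $\gamma\equiv0$ on $[3T/4,T]$, and write down the backward system of type \eqref{eqqwb1} solved by $(\gamma\phi,\gamma\phi_\Gamma)$: it has zero terminal data and sources $\gamma(-a_1\phi-a_2\Phi+h\,\chi_{G_d})+\gamma'\phi$ in $Q$ and $\gamma(-b_1\phi_\Gamma-b_2\widehat\Phi)+\gamma'\phi_\Gamma$ on $\Sigma$, where assumption \eqref{Assump10} has been used to collapse $\alpha_1\psi^1\chi_{G_{1,d}}+\alpha_2\psi^2\chi_{G_{2,d}}$ into $h\,\chi_{G_d}$. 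Applying Itô's formula to $|(\gamma\phi,\gamma\phi_\Gamma)(\cdot)|^2_{\mathbb{L}^2}$, integrating backward from $T$ to $t$, taking expectation, and using the dissipativity of $\mathcal{A}$ (Proposition \ref{prop2.1}), Young's inequality — the Itô correction $\mathbb{E}\int_t^T\gamma^2(|\Phi|^2_{L^2(G)}+|\widehat\Phi|^2_{L^2(\Gamma)})$ being used to absorb the contributions of the $-a_2\Phi$ and $-b_2\widehat\Phi$ terms — Gronwall's lemma in $t$, and the forward energy estimate for $(h,h_\Gamma)$ (which, since $(h,h_\Gamma)|_{t=0}=(0,0)$ and its source is $O(|\phi|\,\chi_{G_1\cup G_2})$, gives $\mathbb{E}\int_0^{T/2}|h(t)|^2_{L^2(G)}\,dt\le C\int_0^{T/2}\mathbb{E}|\phi(s)|^2_{L^2(G)}\,ds$, to be re-absorbed — iterating over short subintervals of $[0,T/2]$ if $T$ is large, a standard device), one obtains
$$\sup_{t\in[0,T/2]}\Big(\mathbb{E}|\phi(t)|^2_{L^2(G)}+\mathbb{E}|\phi_\Gamma(t)|^2_{L^2(\Gamma)}\Big)\le C\,\mathbb{E}\int_{T/2}^{3T/4}\Big(|\phi|^2_{L^2(G)}+|\phi_\Gamma|^2_{L^2(\Gamma)}\Big)\,dt.$$

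It remains to glue this with \eqref{Carlem5.9}. On $(T/2,T)$ one has $\overline\theta=\theta$ and $\overline\varphi=\varphi$, and $\overline{\mathcal{I}}$ involves only the zero-order terms, so $\overline{\mathcal{I}}(\lambda,\mu;\phi,\phi_\Gamma)\big|_{(T/2,T)}\le C\lambda^{-3}\,\mathcal{I}(\lambda,\mu;\phi,\phi_\Gamma)$. On $(0,T/2)$ one has $\ell\equiv T^2/4$, hence $\overline\alpha(t,\cdot)=\alpha(T/2,\cdot)<0$ and $\overline\varphi(t,\cdot)=\varphi(T/2,\cdot)$ are independent of $t$ and uniformly bounded, so $\overline\theta^2\overline\varphi^3\le C$ and therefore, using the previous step,
$$\overline{\mathcal{I}}(\lambda,\mu;\phi,\phi_\Gamma)\big|_{(0,T/2)}+\mathbb{E}|\phi(0)|^2_{L^2(G)}+\mathbb{E}|\phi_\Gamma(0)|^2_{L^2(\Gamma)}\le C\,\mathbb{E}\int_{T/2}^{3T/4}\Big(|\phi|^2_{L^2(G)}+|\phi_\Gamma|^2_{L^2(\Gamma)}\Big)\,dt.$$
Since the weight $\theta^2\varphi^3$ is bounded below by a positive constant on the compact interval $[T/2,3T/4]$, the right-hand side is $\le C\lambda^{-3}\mathcal{I}(\lambda,\mu;\phi,\phi_\Gamma)$, which by Lemma \ref{thmm5.1} is $\le C$ times the right-hand side of \eqref{Carlem5.9}. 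Adding the resulting inequalities to \eqref{Carlem5.9} and choosing $\lambda_4\ge\lambda_3$ large enough yields \eqref{improvedCarl}. The main obstacle is keeping this chain closed near $t=0$, where $\mathcal{I}(\lambda,\mu;h,h_\Gamma)$ no longer controls $h$: it is exactly here that assumption \eqref{Assump10} (turning the two source terms into a single $h\,\chi_{G_d}$), the vanishing of $(h,h_\Gamma)$ at $t=0$, and the use of the quadratic-variation term to handle $\Phi$ and $\widehat\Phi$ are indispensable.
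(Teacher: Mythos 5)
Your overall strategy is the same as the paper's: a temporal cut-off equal to $1$ on $[0,T/2]$ and $0$ on $[3T/4,T]$, the backward energy estimate \eqref{wbac2.1} applied to $(\kappa\phi,\kappa\phi_\Gamma,\kappa\Phi,\kappa\widehat\Phi)$, control of the $(h,h_\Gamma)$-source via the forward estimate \eqref{forwenergyes}, and gluing with Lemma \ref{thmm5.1} using that $\overline\theta,\overline\varphi$ are bounded on $(0,T/2)$ and coincide with $\theta,\varphi$ on $(T/2,T)$. The one genuine gap is the absorption of the $h$-contribution near $t=0$. Your displayed bound $\sup_{t\in[0,T/2]}\big(\mathbb{E}|\phi(t)|^2_{L^2(G)}+\mathbb{E}|\phi_\Gamma(t)|^2_{L^2(\Gamma)}\big)\le C\,\mathbb{E}\int_{T/2}^{3T/4}\big(|\phi|^2_{L^2(G)}+|\phi_\Gamma|^2_{L^2(\Gamma)}\big)\,dt$ cannot be obtained by ``Gronwall plus iteration over short subintervals.'' The pair $(\phi,h)$ is coupled forward--backward: $h(s)$ depends on $\phi$ on all of $[0,s]$ (forward equation, zero initial data), while $\phi(t)$ depends on $h$ on all of $[t,3T/4]$ (backward equation, through the source $h\chi_{G_d}$). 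The resulting integral inequality is of Fredholm rather than Volterra type, so no time-ordering lets a subinterval iteration close: on any subinterval the required bound on $h$ already involves $\phi$ on earlier subintervals that have not yet been estimated. A scalar caricature makes the failure concrete: $\phi'=-h$ backward and $h'=c\,\phi$ forward with $h(0)=0$ reduce to $\phi''=-c\,\phi$ with $\phi'(0)=0$, which admits nontrivial solutions vanishing at the right endpoint once $c\,T^2$ is of order one; hence the estimate is genuinely false without smallness of the coupling constant.

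What actually closes the argument, and what the paper does in \eqref{Ineq2}, is that the coupling constant here is $\big(\tfrac{\alpha_1}{\beta_1}+\tfrac{\alpha_2}{\beta_2}\big)^2$, which is made small by taking $\beta_1,\beta_2$ sufficiently large; this is the standing hypothesis of Theorem \ref{th4.1SN} and Proposition \ref{propp4.1}, and it is invoked at precisely this step. You should replace your iteration by: bound $|(h,h_\Gamma)|^2_{L^2_{\mathcal{F}}(0,3T/4;\mathbb{L}^2)}\le C\big(\tfrac{\alpha_1}{\beta_1}+\tfrac{\alpha_2}{\beta_2}\big)^2$ times the weighted $L^2$ norms of $(\phi,\phi_\Gamma)$ over all of $Q$ and $\Sigma$, then absorb this into the left-hand side for $\beta_i$ large. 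With that change the rest of your gluing goes through. Two minor further remarks: your intermediate inequality should also retain the contributions of $h$, $\Phi$, $\widehat\Phi$ on $(T/2,3T/4)$ on its right-hand side (these are subsequently controlled through \eqref{Carlem5.9}); and the lower bound on $\theta^2\varphi^3$ over $[T/2,3T/4]$ degenerates like $e^{-c\lambda}$, so the constant in that comparison is $\lambda$-dependent --- the paper's proof shares this feature, and one fixes $\lambda=\lambda_4$ downstream.
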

	\begin{proof}
		Arguing as in \cite{BoMaOuNash}, let $\kappa\in C^1([0,T])$ such that
		\begin{equation}\label{kappadef}
			\kappa=1\quad \text{in}\,\; [0,T/2],\quad \ \,\,\kappa= 0\quad \text{in} \,\,\;[3T/4, T],\quad \text{and} \,\,\;\;\kappa'\leq C/T^2.
		\end{equation}
		Let $(p, p_{\Gamma})= \kappa (\phi, \phi_{\Gamma})$, $P=\kappa \Phi$, and $\widehat{P}=\kappa\widehat{\Phi}$. Then, $(p, p_{\Gamma}, P, \widehat{P})$ satisfies the system 
		\begin{equation}\label{ADJSO}
			\begin{cases}
				dp+\Delta p \,dt=\big[-a_1 p-a_2 P+\kappa h\chi_{G_d}(x)+ \kappa'\phi\big] \,dt+P \,dW(t) & \text { in }Q, \\ 
				dp_{\Gamma}+\Delta_\Gamma p_\Gamma \,dt-\partial_\nu p \,dt=\big[-b_1p_{\Gamma}-b_2 \widehat{P} + \kappa'\phi_{\Gamma}\big] \,dt+ \widehat{P} \,dW(t) & \text { on }\Sigma, \\ 
				p_{\Gamma}(t, x)=p|_\Gamma(t, x) & \text { on }\Sigma, \\
				(p,p_\Gamma)|_{t=T}=(0,0) & \text { in } G \times \Gamma.
			\end{cases}
		\end{equation}
		Using the estimate \eqref{wbac2.1} and \eqref{kappadef}, we find a positive constant $C$ such that  \begin{align}\label{energy}
			\begin{aligned}
				&\,\mathbb{E}|\phi(0)|^2_{L^2(G)}+\mathbb{E}|\phi_{\Gamma}(0)|^2_{L^2(\Gamma)}+ |(\phi, \phi_{\Gamma})|^2_{L_{\mathcal{F}}^2\left(0, T/2 ; \mathbb{L}^2\right)}\\
				&\leq C\left(\frac{1}{T^2}|(\phi, \phi_{\Gamma})|^2_{L_{\mathcal{F}}^2(T/2, 3T/4 ; \mathbb{L}^2)} +|(h, h_{\Gamma})|^2_{L_{\mathcal{F}}^2(0, 3T/4 ; \mathbb{L}^2)} \right).
			\end{aligned}
		\end{align}
		Since  the weight functions $\overline{\theta}$ and $\overline{\varphi}$ (resp., $\theta$ and $\varphi$ ) are bounded in $(0, T/2)\times\overline{G}$ (resp., $(T/2, 3T/4)\times\overline{G}$), then we have that
		\begin{align*}
			\begin{aligned}
				&\,\mathbb{E}|\phi(0)|^2_{L^2(G)}+\mathbb{E}|\phi_{\Gamma}(0)|^2_{L^2(\Gamma)}+\mathbb{E}\int_{(0,T/2)\times G}\overline{\theta}^2\overline{\varphi}^3\phi^2  \,dx\,dt  +\mathbb{E}\int_{(0,T/2)\times\Gamma}\overline{\theta}^2\overline{\varphi}^3\phi_{\Gamma}^2  \,d\sigma dt\\ 
				&\leq C\left(\frac{1}{T^2}   \mathbb{E}\int_{(T/2,3T/4 )\times G}  \theta^2\varphi^3\phi^2 dx\,dt + \frac{1}{T^2}\mathbb{E}\int_{(T/2,3T/4 )\times\Gamma}  \theta^2\varphi^3\phi_{\Gamma}^2 d\sigma\,dt + |(h, h_{\Gamma})|^2_{L_{\mathcal{F}}^2\left(0, 3T/4 ; \mathbb{L}^2\right)} \right).
			\end{aligned}
		\end{align*}
		Then, it follows that
		\begin{align}\label{energy2}
			\begin{aligned}
				&\,\mathbb{E}|\phi(0)|^2_{L^2(G)}+\mathbb{E}|\phi_{\Gamma}(0)|^2_{L^2(\Gamma)}+\mathbb{E}\int_{(0,T/2)\times G}\overline{\theta}^2\overline{\varphi}^3\phi^2  \,dx\,dt  +\mathbb{E}\int_{(0,T/2)\times\Gamma}\overline{\theta}^2\overline{\varphi}^3\phi_{\Gamma}^2  \,d\sigma dt\\ 
				&\leq C \left(\mathcal{I}(\lambda,\mu;\phi, \phi_{\Gamma}) +  |(h, h_{\Gamma})|^2_{L_{\mathcal{F}}^2\left(0, 3T/4 ; \mathbb{L}^2\right)} \right).
			\end{aligned}
		\end{align}
		Using the fact that  $\theta=\overline{\theta}$ and $\varphi=\overline{\varphi}$ in $(T/2, T)$, we have  \begin{equation}\label{Eq5.18}
			\mathbb{E}\int_{(T/2, T)\times G}\overline{\theta}^2\overline{\varphi}^3\phi^2  \,dx\,dt  +\mathbb{E}\int_{(T/2,T)\times\Gamma}\overline{\theta}^2\overline{\varphi}^3\phi_{\Gamma}^2  \,d\sigma dt
			\leq C \,\mathcal{I}(\lambda,\mu;\phi, \phi_{\Gamma}).
		\end{equation}
		Adding \eqref{energy2} and \eqref{Eq5.18}, we obtain that
		\begin{align}\label{Ineq1}
			\begin{aligned}
				&\,\mathbb{E}|\phi(0)|^2_{L^2(G)}+\mathbb{E}|\phi_{\Gamma}(0)|^2_{L^2(\Gamma)}+\mathbb{E}\int_{Q}\overline{\theta}^2\overline{\varphi}^3\phi^2  \,dx\,dt  +\mathbb{E}\int_{\Sigma}\overline{\theta}^2\overline{\varphi}^3\phi_{\Gamma}^2  \,d\sigma dt\\
				&\leq C \left(\mathcal{I}(\lambda,\mu;\phi,\phi_{\Gamma}) +  |(h, h_{\Gamma})|^2_{L_{\mathcal{F}}^2\left(0, 3T/4 ; \mathbb{L}^2\right)} \right).
			\end{aligned}
		\end{align}
		Using the estimate \eqref{forwenergyes} for the system satisfied by $(h,h_{\Gamma})$ and the fact that the  weight functions $\overline{\theta}$ and $\overline{\varphi}$ are bounded in $[0, 3T/4],$ we see that
		\begin{align}\label{Ineq2}
			|(h,h_{\Gamma})|^2_{L_{\mathcal{F}}^2\left(0, 3T/4 ; \mathbb{L}^2\right)}\leq C\left(\frac{\alpha_1}{\beta_1}+ \frac{\alpha_2}{\beta_2}\right)^2\left[\mathbb{E}\int_{Q}\overline{\theta}^2\overline{\varphi}^3\phi^2  \,dx\,dt  +\mathbb{E}\int_{\Sigma}\overline{\theta}^2\overline{\varphi}^3\phi_{\Gamma}^2  \,d\sigma dt\right].
		\end{align}
		Combining \eqref{Ineq1} and \eqref{Ineq2} and taking a large enough $\beta_1$ and  $\beta_2$,  we deduce that
		\begin{align*} &\,\mathbb{E}|\phi(0)|^2_{L^2(G)}+\mathbb{E}|\phi_{\Gamma}(0)|^2_{L^2(\Gamma)}+\mathbb{E}\int_{Q}\overline{\theta}^2\overline{\varphi}^3\phi^2  \,dx\,dt  +\mathbb{E}\int_{\Sigma}\overline{\theta}^2\overline{\varphi}^3\phi_{\Gamma}^2  \,d\sigma dt\\
			&\leq C \,\mathcal{I}(\lambda,\mu;\phi,\phi_{\Gamma}),
		\end{align*}
		which provides that
		\begin{align}\label{findIneq}
			\begin{aligned}
				&\,\mathbb{E}|\phi(0)|^2_{L^2(G)}+\mathbb{E}|\phi_{\Gamma}(0)|^2_{L^2(\Gamma)}+ \overline{\mathcal{I}}(\lambda,\mu; \phi, \phi_{\Gamma})+ \mathcal{I}(\lambda,\mu; h, h_{\Gamma})\\
				&\leq C \left(\,\mathcal{I}(\lambda,\mu;\phi,\phi_{\Gamma})+ \mathcal{I}(\lambda,\mu; h, h_{\Gamma})\right).
			\end{aligned}
		\end{align}
		We finally combine \eqref{findIneq} and \eqref{Carlem5.9}, we deduce the desired Carleman estimate \eqref{improvedCarl}. This concludes the proof of Theorem \ref{lem4.5st}.
	\end{proof}
	\section{Null controllability}\label{section6} 
	In this section, we prove our main null controllability result given in Theorem \ref{th4.1SN}. To this end, we shall first establish the following key observability inequality.
	\begin{prop}\label{Pro4.2}
		Assuming that \eqref{Assump10} holds  and $\beta_i>0$, $i=1,2$,  are large enough, there exist a constant $C>0$ and a positive weight function $\rho=\rho(t)$ blowing up at $t=T$, such that, for any $(\phi_T,\phi_{\Gamma,T})\in L^2_{\mathcal{F}_T}(\Omega;\mathbb{L}^2)$, the solution $((\phi, \phi_{\Gamma},\Phi, \widehat{\Phi}), \Psi^1, \Psi^2)$ of \eqref{ADJSO1} satisfies the following inequality
		\begin{align}\label{observaineq}
			\begin{aligned}
				&\,\mathbb{E}|\phi(0)|^2_{L^2(G)}+ \mathbb{E}|\phi_{\Gamma}(0)|^2_{L^2(\Gamma)}+ \sum_{i=1}^{2} \left[ \mathbb{E}\int_Q\rho^{-2}|\psi^i|^2\,dx\,dt+\mathbb{E}\int_\Sigma\rho^{-2}|\psi^i_\Gamma|^2\,d\sigma\,dt\right]\\
				&\leq C \left[ \mathbb{E}\int_{Q_0}  \phi^2 \,\,dx\,dt+\mathbb{E}\int_{Q}\Phi^2\,dx\,dt+\mathbb{E}\int_{\Sigma}\widehat{\Phi}^2 \,d\sigma\,dt\right].
			\end{aligned}
		\end{align}
	\end{prop}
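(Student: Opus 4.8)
The plan is to deduce the observability inequality \eqref{observaineq} directly from the improved Carleman estimate \eqref{improvedCarl} of Theorem \ref{lem4.5st}, the only genuinely new work being to absorb the two weighted terms involving $\psi^1$ and $\psi^2$ by the quantity $\overline{\mathcal{I}}(\lambda,\mu;\phi,\phi_\Gamma)$, which \eqref{improvedCarl} already places on the left-hand side. First I would fix $\mu=\mu_3$ and $\lambda=\lambda_4$ once and for all. Since $\theta=e^{\lambda\alpha}$ with $\alpha(t,x)\to-\infty$ as $t\to0^+$ or $t\to T^-$, the functions $\lambda^7\theta^2\varphi^7$, $\lambda^2\theta^2\varphi^2$ and $\lambda^2\theta^2\varphi$ are all bounded on $(0,T)\times\overline{G}$; hence the right-hand side of \eqref{improvedCarl} is bounded by $C\big[\mathbb{E}\int_{Q_0}\phi^2\,dx\,dt+\mathbb{E}\int_Q\Phi^2\,dx\,dt+\mathbb{E}\int_\Sigma\widehat{\Phi}^2\,d\sigma\,dt\big]$, which is exactly the right-hand side of \eqref{observaineq}. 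Consequently \eqref{improvedCarl} already yields the desired bound for $\mathbb{E}|\phi(0)|^2_{L^2(G)}+\mathbb{E}|\phi_\Gamma(0)|^2_{L^2(\Gamma)}+\overline{\mathcal{I}}(\lambda,\mu;\phi,\phi_\Gamma)$, and it remains to control the $\psi^i$-terms in \eqref{observaineq} by $\overline{\mathcal{I}}(\lambda,\mu;\phi,\phi_\Gamma)$. (Note that $\mathcal{I}(\lambda,\mu;h,h_\Gamma)$ on the left-hand side of \eqref{improvedCarl} is not directly usable for this, since $h=\alpha_1\psi^1+\alpha_2\psi^2$ does not separate into $\psi^1$ and $\psi^2$.)

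To that end I would set $\rho(t)=e^{-\lambda\overline{\alpha}^\star(t)}$ with $\overline{\alpha}^\star$ as in \eqref{rec2}. Because $\overline{\alpha}^\star$ is a negative $C^1$ function that is constant on $(0,T/2)$, strictly decreasing on $(T/2,T)$ and tends to $-\infty$ as $t\to T^-$, the function $\rho$ is positive, of class $C^1$, and blows up at $t=T$, while $t\mapsto\rho(t)^{-2}=e^{2\lambda\overline{\alpha}^\star(t)}$ is nonincreasing. Recall that $(\psi^i,\psi^i_\Gamma)$ solves the forward equation in \eqref{ADJSO1} with source $\tfrac1{\beta_i}\chi_{G_i}\phi$ and vanishing initial datum. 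Applying Itô's formula to $t\mapsto\rho(t)^{-2}\big(|\psi^i(t)|^2_{L^2(G)}+|\psi^i_\Gamma(t)|^2_{L^2(\Gamma)}\big)$, using the dissipativity of $\mathcal{A}$ (Proposition \ref{prop2.1}) to discard the nonpositive terms $-\int_G|\nabla\psi^i|^2\,dx-\int_\Gamma|\nabla_\Gamma\psi^i_\Gamma|^2\,d\sigma$ (the normal-derivative coupling cancels after integration by parts since $\psi^i_\Gamma=\psi^i|_\Gamma$), discarding also the nonpositive contribution of $(\rho^{-2})'$, then taking expectations and using Young's inequality on the source term, one obtains for $g(t):=\mathbb{E}\big[\rho(t)^{-2}\big(|\psi^i(t)|^2_{L^2(G)}+|\psi^i_\Gamma(t)|^2_{L^2(\Gamma)}\big)\big]$ the estimate $g(t)\le C\int_0^t g(s)\,ds+C\,\mathbb{E}\int_Q\rho^{-2}|\phi|^2\,dx\,dt$. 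Gronwall's lemma gives $g(t)\le C\,\mathbb{E}\int_Q\rho^{-2}|\phi|^2\,dx\,dt$ uniformly in $t\in[0,T]$, and integrating in $t$ yields
\begin{equation*}
	\mathbb{E}\int_Q\rho^{-2}|\psi^i|^2\,dx\,dt+\mathbb{E}\int_\Sigma\rho^{-2}|\psi^i_\Gamma|^2\,d\sigma\,dt\le C\,\mathbb{E}\int_Q\rho^{-2}|\phi|^2\,dx\,dt,\qquad i=1,2.
\end{equation*}

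Finally, since $\overline{\alpha}^\star(t)\le\overline{\alpha}(t,x)$ for every $x\in\overline{G}$ and $\overline{\varphi}(t,x)\ge C>0$ on $(0,T)\times\overline{G}$, the pointwise bound $\rho(t)^{-2}=e^{2\lambda\overline{\alpha}^\star(t)}\le C\,e^{2\lambda\overline{\alpha}(t,x)}\overline{\varphi}(t,x)^3=C\,\overline{\theta}^2\overline{\varphi}^3$ holds, whence $\mathbb{E}\int_Q\rho^{-2}|\phi|^2\,dx\,dt\le C\,\mathbb{E}\int_Q\overline{\theta}^2\overline{\varphi}^3\phi^2\,dx\,dt\le C\,\overline{\mathcal{I}}(\lambda,\mu;\phi,\phi_\Gamma)$. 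Combining this with the displayed estimate for the $\psi^i$ and with the bound obtained in the first step from \eqref{improvedCarl}, and summing over $i=1,2$, gives \eqref{observaineq} with the stated weight $\rho$. I expect the main obstacle to be this weighted energy estimate for $\psi^i$: one must select the $t$-only weight $\rho$ so that $\rho^{-2}$ is both dominated by the Carleman weight $\overline{\theta}^2\overline{\varphi}^3$ and nonincreasing in time (so that its time derivative has a favorable sign), and one must carefully justify Itô's formula for the squared $\mathbb{L}^2$-norm of the mild solution together with the correct treatment of the Wentzell boundary coupling via the dissipativity of $\mathcal{A}$.
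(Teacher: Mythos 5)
Your proposal is correct and follows essentially the same route as the paper: the same weight $\rho(t)=e^{-\lambda\overline{\alpha}^{\star}(t)}$, the same Itô--Gronwall energy estimate giving $\mathbb{E}\int_Q\rho^{-2}|\psi^i|^2\,dx\,dt+\mathbb{E}\int_\Sigma\rho^{-2}|\psi^i_\Gamma|^2\,d\sigma\,dt\le C\,\mathbb{E}\int_Q\rho^{-2}\phi^2\,dx\,dt$, the same comparison $\rho^{-2}\le C\,\overline{\theta}^2\overline{\varphi}^3$, and the same final appeal to the improved Carleman estimate \eqref{improvedCarl} with the boundedness of $\theta^2\varphi^k$ to remove the weights on the right-hand side. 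The only cosmetic difference is that you discard the $(\rho^{-2})'$ term by sign, whereas the paper simply absorbs it into the Gronwall constant; both are valid.
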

	\begin{proof} 
		For fixed $\mu=\mu_3$ and $\lambda=\lambda_4$ (where $\mu_3$ and $\lambda_4$ are given in Theorem \ref{lem4.5st}), we set the weight function $\rho=\rho(t)=e^{-\lambda\overline{\alpha}^{\star}(t)}$.
		By Itô's formula, we have that
		\begin{equation}\label{ito1}
			d(\rho^{-2} (\psi^i)^2)= -2\rho'\rho^{-3} (\psi^i)^2dt+ \rho^{-2} \left[2\psi^i d \psi^i+ (d\psi^i)^2  \right],\qquad i=1,2,
		\end{equation}
		and
		\begin{equation}\label{ito2}
			d( \rho^{-2}(\psi^i_{\Gamma})^2)= -2\rho'\rho^{-3}(\psi^i_{\Gamma})^2dt+ \rho^{-2} \left[2\psi^i_{\Gamma} d \psi^i_{\Gamma}+ (d\psi^i_{\Gamma})^2) \right], \qquad i=1,2.
		\end{equation}
		Using \eqref{ito1} and  \eqref{ito2}, we obtain that the solution $((\phi, \phi_{\Gamma},\Phi, \widehat{\Phi}), \Psi^1, \Psi^2)$ of the system \eqref{ADJSO1} satisfies that for all $t\in(0,T)$
		\begin{align*}
			&\,\mathbb{E}\int_{(0,t)\times G}d(\rho^{-2} (\psi^i)^2)\, dx+\mathbb{E}\int_{(0,t)\times\Gamma}d(\rho^{-2}  (\psi^i_{\Gamma})^2)\, d\sigma\\
			&\leq-2\mathbb{E}\int_{(0,t)\times G}\rho'\rho^{-3} (\psi^i)^2\, dx \,dt-2\mathbb{E}\int_{(0,t)\times\Gamma}\rho'\rho^{-3} (\psi^i_{\Gamma})^2\, d\sigma\,dt\\
			&\quad\;+ 2\mathbb{E}\int_{(0,t)\times G}\rho^{-2}\left[a_1(\psi^i)^2+\frac{1}{\beta_i}\chi_{G_i}(x)\psi^i\phi\right]\, dx \,dt+ \mathbb{E}\int_{(0,t)\times G}\rho^{-2} a_2^2 (\psi^i)^2\, dx \,dt\\
			&\quad\;+2\mathbb{E}\int_{(0,t)\times\Gamma}\rho^{-2}b_1(\psi^i_{\Gamma})^2\, d\sigma \,dt+\mathbb{E}\int_{(0,t)\times\Gamma}\rho^{-2} b_2^2 (\psi^i_{\Gamma})^2\, d\sigma \,dt, \qquad i=1,2.
		\end{align*}
		Then, it follows that
		\begin{align*}
			&\,\mathbb{E}\int_{(0,t)\times G}d(\rho^{-2} (\psi^i)^2)\, dx+\mathbb{E}\int_{(0,t)\times\Gamma}d(\rho^{-2}  (\psi^i_{\Gamma})^2)\, d\sigma\\
			&\leq C\left[\mathbb{E}\int_{(0,t)\times G}\rho^{-2} (\psi^i)^2\, dx \,dt+\mathbb{E}\int_{(0,t)\times\Gamma}\rho^{-2}  (\psi^i_{\Gamma})^2\, d\sigma \,dt+ \mathbb{E}\int_{Q}\rho^{-2} \phi^2\, dx \,dt\right], \qquad i=1,2.
		\end{align*}
		Using the fact that  $\psi^i(0)=\psi^i_{\Gamma}(0)=0$, and the Gronwall inequality, we obtain that
		\begin{align}\label{ineqwithrho}
			\mathbb{E}\int_Q\rho^{-2}|\psi^i|^2\,dx\,dt+\mathbb{E}\int_\Sigma\rho^{-2}|\psi^i_\Gamma|^2\,d\sigma\,dt
			\leq C\,\mathbb{E}\int_{Q}\rho^{-2} \phi^2\, dx \,dt,\qquad i=1,2.
		\end{align}
		Using \eqref{ineqwithrho} and recalling \eqref{rec1} and \eqref{rec2}, we have that
		\begin{align*}
			&\,\mathbb{E}|\phi(0)|^2_{L^2(G)}+\mathbb{E}|\phi_{\Gamma}(0)|^2_{L^2(\Gamma)}+ \sum_{i=1}^{2} \left[ \mathbb{E}\int_Q\rho^{-2}|\psi^i|^2\,dx\,dt+\mathbb{E}\int_\Sigma\rho^{-2}|\psi^i_\Gamma|^2\,d\sigma\,dt\right] \\
			&\leq \mathbb{E}|\phi(0)|^2_{L^2(G)}+\mathbb{E}|\phi_{\Gamma}(0)|^2_{L^2(\Gamma)} +C\,\mathbb{E}\int_{Q}\overline{\theta}^2\overline{\varphi}^3 \phi^2\, dx \,dt.
		\end{align*}
		Then, by Carleman estimate \eqref{improvedCarl}, we deduce that
		\begin{align*}
			&\,\mathbb{E}|\phi(0)|^2_{L^2(G)}+\mathbb{E}|\phi_{\Gamma}(0)|^2_{L^2(\Gamma)}+ \sum_{i=1}^{2} \left[ \mathbb{E}\int_Q\rho^{-2}|\psi^i|^2\,dx\,dt+\mathbb{E}\int_\Sigma\rho^{-2}|\psi^i_\Gamma|^2\,d\sigma\,dt\right] 
			\\&\leq C\left[ \mathbb{E}\int_{Q_0}  \phi^2 \, \,dx\,dt+\mathbb{E}\int_{Q}\Phi^2\,dx\,dt+\mathbb{E}\int_{\Sigma}\widehat{\Phi}^2 \,d\sigma\,dt\right].
		\end{align*}
		This concludes the proof of Proposition \ref{Pro4.2}.
	\end{proof} 
	Now we are in a position to prove the null controllability result of the optimality system \eqref{eqq4.7}.
	\begin{prop}\label{Lm5.5}
		Let  $\rho=\rho(t)$ the  weight function given in Proposition \ref{Pro4.2}. Then, for any  $y_{i,d} \in \mathcal{V}_{i,d}$, $i=1,2,$ satisfying \eqref{inqAss11SN} and  $(y_0,y_{\Gamma, 0} )\in L^2_{\mathcal{F}_0}(\Omega;\mathbb{L}^2)$, there exists a control 
		$$(u_1, u_2, u_3) \in L_{\mathcal{F}}^{2}(0,T; L^2(G_0))\times L_{\mathcal{F}}^{2}(0,T; L^2(G))\times L_{\mathcal{F}}^{2}(0,T; L^2(\Gamma)),$$ 
		with minimal norm such that the corresponding solution of \eqref{eqq4.7} satisfies that
		$$(y(T,\cdot),y_\Gamma(T,\cdot))=(0,0)\;\;\textnormal{in}\;\;G\times\Gamma,\quad\mathbb{P}\textnormal{-a.s.}$$
	\end{prop}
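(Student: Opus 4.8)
The plan is to deduce the null controllability of the coupled forward--backward optimality system \eqref{eqq4.7} from the observability inequality \eqref{observaineq} by the penalized duality method, and then to identify the resulting control as the one of minimal $\mathcal{U}$-norm. As a preliminary I would record the well-posedness of \eqref{eqq4.7}: for $\beta_1,\beta_2$ large, the map $(z^1,z^2)\mapsto(y,y_\Gamma)\mapsto(z^1,z^2)$ (solve the forward equation, then the two backward equations) is a contraction on $L^2_\mathcal{F}(0,T;L^2(G))^2$ by virtue of the $1/\beta_i$ factors and the energy bounds \eqref{forwenergyes}, \eqref{wbac2.1}; thus for every $(u_1,u_2,u_3)\in\mathcal{U}$ there is a unique solution, depending affinely and continuously on $(u_1,u_2,u_3)$ and on the data $(y_0,y_{\Gamma,0},y_{1,d},y_{2,d})$. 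The same scheme applied to \eqref{ADJSO1} (the $\psi^i$ forward and $\phi$ backward, with $1/\beta_i$ in the coupling) gives the well-posedness of the adjoint system used below.

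Fix $\varepsilon>0$ and define $J_\varepsilon:L^2_{\mathcal{F}_T}(\Omega;\mathbb{L}^2)\to\mathbb{R}$ by
\[
J_\varepsilon(\phi_T,\phi_{\Gamma,T})=\frac12\Big[\mathbb{E}\!\int_{Q_0}\!\phi^2\,dx\,dt+\mathbb{E}\!\int_Q\!\Phi^2\,dx\,dt+\mathbb{E}\!\int_\Sigma\!\widehat\Phi^2\,d\sigma\,dt\Big]+\varepsilon\,|(\phi_T,\phi_{\Gamma,T})|_{L^2_{\mathcal{F}_T}(\Omega;\mathbb{L}^2)}+\mathbb{E}\big\langle(\phi(0),\phi_\Gamma(0)),(y_0,y_{\Gamma,0})\big\rangle_{\mathbb{L}^2}+\sum_{i=1}^2\alpha_i\,\mathbb{E}\!\int_{(0,T)\times G_{i,d}}\!y_{i,d}\,\psi^i\,dx\,dt,
\]
where $((\phi,\phi_\Gamma,\Phi,\widehat\Phi),\Psi^1,\Psi^2)$ solves \eqref{ADJSO1} with terminal datum $(\phi_T,\phi_{\Gamma,T})$. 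This functional is continuous and strictly convex, and its coercivity is exactly the point where \eqref{observaineq} and the weighted hypothesis \eqref{inqAss11SN} enter: by Cauchy--Schwarz, $\big|\mathbb{E}\int_{(0,T)\times G_{i,d}} y_{i,d}\psi^i\big|\le\big(\mathbb{E}\int\rho^2|y_{i,d}|^2\big)^{1/2}\big(\mathbb{E}\int\rho^{-2}|\psi^i|^2\big)^{1/2}$, so, together with $|\mathbb{E}\langle\phi(0),y_0\rangle|\le|\phi(0)|\,|y_0|$, Young's inequality and Proposition \ref{Pro4.2}, one absorbs the $\mathbb{E}|\phi(0)|^2$ and $\sum_i\mathbb{E}\int\rho^{-2}|\psi^i|^2$ contributions to obtain
\[
J_\varepsilon(\phi_T,\phi_{\Gamma,T})\ge\frac14\Big[\mathbb{E}\!\int_{Q_0}\!\phi^2+\mathbb{E}\!\int_Q\!\Phi^2+\mathbb{E}\!\int_\Sigma\!\widehat\Phi^2\Big]+\varepsilon\,|(\phi_T,\phi_{\Gamma,T})|_{L^2_{\mathcal{F}_T}(\Omega;\mathbb{L}^2)}-C\Big(|(y_0,y_{\Gamma,0})|^2+\sum_{i=1}^2\mathbb{E}\!\int\!\rho^2|y_{i,d}|^2\Big).
\]
Hence $J_\varepsilon$ has a unique minimizer $(\widehat\phi_T^{\,\varepsilon},\widehat\phi_{\Gamma,T}^{\,\varepsilon})$ with associated adjoint state $((\phi^\varepsilon,\phi_\Gamma^\varepsilon,\Phi^\varepsilon,\widehat\Phi^\varepsilon),\Psi^{1,\varepsilon},\Psi^{2,\varepsilon})$.

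Writing the Euler--Lagrange equation $\langle J_\varepsilon'(\widehat\phi_T^{\,\varepsilon}),(\phi_T,\phi_{\Gamma,T})\rangle=0$ and inserting the duality identity obtained by applying It\^o's formula to the product of solutions of \eqref{eqq4.7} and \eqref{ADJSO1}, I would read off the controls $(u_1^\varepsilon,u_2^\varepsilon,u_3^\varepsilon)$ as the relevant restrictions/components of $(\phi^\varepsilon,\Phi^\varepsilon,\widehat\Phi^\varepsilon)$, and check that the corresponding solution $(y^\varepsilon,y_\Gamma^\varepsilon)$ of \eqref{eqq4.7} satisfies $|(y^\varepsilon(T),y_\Gamma^\varepsilon(T))|_{L^2_{\mathcal{F}_T}(\Omega;\mathbb{L}^2)}\le\varepsilon$. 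Evaluating the minimality $J_\varepsilon(\widehat\phi_T^{\,\varepsilon},\widehat\phi_{\Gamma,T}^{\,\varepsilon})\le J_\varepsilon(0,0)=0$ and reusing the coercivity estimate yields $|(u_1^\varepsilon,u_2^\varepsilon,u_3^\varepsilon)|_{\mathcal{U}}^2\le C\big(|(y_0,y_{\Gamma,0})|^2+\sum_i\mathbb{E}\int\rho^2|y_{i,d}|^2\big)$ uniformly in $\varepsilon$. I would then pass to the limit $\varepsilon\to0$: along a subsequence $u_j^\varepsilon\rightharpoonup u_j$ weakly in the respective spaces, the affine continuous dependence of the solution of \eqref{eqq4.7} on the controls lets me pass to the limit in the state and in $y^\varepsilon(T)$, so that $(y(T),y_\Gamma(T))=(0,0)$ $\mathbb{P}$-a.s. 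The minimal-norm claim then follows either from weak lower semicontinuity of $|\cdot|_{\mathcal{U}}$ along the limit, or, once the admissible set is known to be nonempty, closed and convex, by selecting $(u_1,u_2,u_3)$ to be its element of least norm (equivalently, the minimizer of $J$).

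The step I expect to be the main obstacle is the uniform-in-$\varepsilon$ control bound: the coupling of the optimality system makes the targets $y_{i,d}$ appear in the duality identity only through the forward components $\psi^i$ of the adjoint state, so the computation must be arranged so that these contributions are precisely matched against the weighted term $\sum_i\mathbb{E}\int\rho^{-2}|\psi^i|^2$ controlled in Proposition \ref{Pro4.2}; correlatively, one must justify the It\^o-based duality identity between the two coupled systems \eqref{eqq4.7} and \eqref{ADJSO1}, which rests on the well-posedness and the $\mathbb{H}^1$/$C([0,T];\mathbb{L}^2)$-regularity recorded above.
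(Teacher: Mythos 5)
Your proposal is correct and follows essentially the same route as the paper: the penalized functional $J_\varepsilon$, coercivity via the observability inequality \eqref{observaineq} combined with the weighted hypothesis \eqref{inqAss11SN}, identification of the controls with $(\phi^\varepsilon,\Phi^\varepsilon,\widehat{\Phi}^\varepsilon)$ through the duality identity, the uniform bound, and the weak limit as $\varepsilon\to0$. The only point the paper treats explicitly that you gloss over is the degenerate case where the minimizer of $J_\varepsilon$ is zero (so the norm term is not differentiable and one argues via the one-sided limit of difference quotients), but this is a routine addition.
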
 
	\begin{proof}
		Multiplying the solution $((y, y_\Gamma), \mathcal{Z}^1, \mathcal{Z}^2)$ of \eqref{eqq4.7} by the solution $((\phi, \phi_{\Gamma}, \Phi, \widehat{\Phi}), \Psi^1, \Psi^2)$ of \eqref{ADJSO1}, where $\mathcal{Z}^i = (z^i, z_\Gamma^i, Z^i, \widehat{Z}^i)$ and $\Psi^i = (\psi^i, \psi_\Gamma^i)$, $i=1,2$, and then taking the expectation on both sides followed by integration by parts, we find that
		\begin{align}\label{dual}
			\begin{aligned}
				&\,\mathbb{E}\left\langle(y(T),y_{\Gamma}(T)),(\phi_T,\phi_{\Gamma,T})\right\rangle_{\mathbb{L}^2}-\mathbb{E}\left\langle(y_0,y_{\Gamma,0}),(\phi(0),\phi_{\Gamma}(0))\right\rangle_{\mathbb{L}^2} \\
				&=\mathbb{E}\int_{Q_0} u_1\phi \, dx \,dt +\mathbb{E}\int_{Q} u_2\Phi \, dx \,dt +\mathbb{E}\int_{\Sigma} u_3\widehat{\Phi} \, d\sigma \,dt\\ &\quad\,+\sum_{i=1}^{2}\alpha_i \mathbb{E}\int_{ (0,T)\times G_{i,d}} y_{i,d} \psi^i dx \,dt.
			\end{aligned}
		\end{align}
		Thus,  the null controllability property is equivalent to finding, for each $(y_0,y_{\Gamma, 0} )\in L^2_{\mathcal{F}_0}(\Omega;\mathbb{L}^2)$, a control $(u_1, u_2, u_3)$ such that, for any $(\phi_T,\phi_{\Gamma,T})\in L^2_{\mathcal{F}_T}(\Omega;\mathbb{L}^2)$, one has
		\begin{align*}
			&\,\mathbb{E}\left\langle(y_0,y_{\Gamma,0}),(\phi(0),\phi_{\Gamma}(0))\right\rangle_{\mathbb{L}^2} +\mathbb{E}\int_{Q_0} u_1\phi \, dx \,dt +\mathbb{E}\int_{Q} u_2\Phi \, dx \,dt\\
			&+\mathbb{E}\int_{\Sigma} u_3\widehat{\Phi} \, d\sigma \,dt+\sum_{i=1}^{2}\alpha_i \mathbb{E}\int_{ (0,T)\times G_{i,d}} y_{i,d} \psi^i dx \,dt=0.
		\end{align*}
		To this end, let $\varepsilon>0$ and  $(\phi_T,\phi_{\Gamma,T})\in L^2_{\mathcal{F}_T}(\Omega;\mathbb{L}^2)$. Introduce the following functional
		\begin{align*}
			J_{\varepsilon}(\phi_T,\phi_{\Gamma,T})&= \frac{1}{2}\mathbb{E}\int_{Q_0}\phi^2 \,dx\,dt +\frac{1}{2}\mathbb{E}\int_{Q}\Phi^2 \,dx\,dt +\frac{1}{2}\mathbb{E}\int_{\Sigma}\widehat{\Phi}^2 \,d\sigma\,dt +\varepsilon\mathbb{E}|(\phi_T,\phi_{\Gamma,T})|_{\mathbb{L}^2} \\
			&\quad+\mathbb{E}\left\langle(y_0,y_{\Gamma,0}),(\phi(0),\phi_{\Gamma}(0))\right\rangle_{\mathbb{L}^2}+ \sum_{i=1}^{2}\alpha_i \mathbb{E}\int_{(0,T)\times G_{i,d}} y_{i,d} \psi^i \,dx \,dt.
		\end{align*}
		It is easy to see that $J_{\varepsilon}: L^2_{\mathcal{F}_T}(\Omega;\mathbb{L}^2)\longrightarrow\mathbb{R}$ is continuous and strictly convex. Moreover, from Young inequality together with the observability inequality \eqref{observaineq}, we have that for any $\delta>0$
		\begin{align*}
			\mathbb{E}\left\langle(y_0,y_{\Gamma,0}),(\phi(0),\phi_{\Gamma}(0))\right\rangle_{\mathbb{L}^2}&\geq -\frac{1}{2\delta}\mathbb{E}|(y_0,y_{\Gamma,0})|^2_{\mathbb{L}^2} - \frac{\delta}{2}\mathbb{E}|(\phi(0),\phi_{\Gamma}(0))|^2_{\mathbb{L}^2}\\
			&\geq  - \frac{\delta}{2}C\left( \mathbb{E}\int_{Q_0}  \phi^2 \, \,dx\,dt+\mathbb{E}\int_{Q}\Phi^2\,dx\,dt+\mathbb{E}\int_{\Sigma}\widehat{\Phi}^2 \,d\sigma\,dt\right) \\
			&\quad\,+ \frac{\delta}{2}\,\sum_{i=1}^2\left(\mathbb{E}\int_{Q}\rho^{-2}|\psi^i|^2  \,dx\,dt +\mathbb{E}\int_{\Sigma}\rho^{-2}|\psi^i_\Gamma|^2 \,d\sigma\,dt\right)\\
			&\quad\,-\frac{1}{2\delta }\mathbb{E}|(y_0,y_{\Gamma,0})|^2_{\mathbb{L}^2}.\\
		\end{align*}
		Then, it follows that
		\begin{align}\label{inneq1}
			\begin{aligned}
				\mathbb{E}\left\langle(y_0,y_{\Gamma,0}),(\phi(0),\phi_{\Gamma}(0))\right\rangle_{\mathbb{L}^2}&\geq  - \frac{\delta}{2}C\left( \mathbb{E}\int_{Q_0}  \phi^2 \, \,dx\,dt+\mathbb{E}\int_{Q}\Phi^2\,dx\,dt+\mathbb{E}\int_{\Sigma}\widehat{\Phi}^2 \,d\sigma\,dt\right)\\
				&\quad\,+ \frac{\delta}{2}\,\sum_{i=1}^2\mathbb{E}\int_{(0,T)\times G_{i,d}}\rho^{-2}|\psi^i|^2  \,dx\,dt-\frac{1}{2\delta }\mathbb{E}|(y_0,y_{\Gamma,0})|^2_{\mathbb{L}^2}.
			\end{aligned}
		\end{align}
		On the other hand, we have that
		\begin{align}\label{inneq2}
			\begin{aligned}
				\sum_{i=1}^{2}\alpha_i \mathbb{E}\int_{(0,T)\times G_{i,d}} y_{i,d} \psi^i \,dx \,dt&\geq -\frac{1}{2\delta}\sum_{i=1}^{2}\alpha_i^2 \mathbb{E}\int_{(0,T)\times G_{i,d}} \rho^2|y_{i,d}|^2 \,dx \,dt\\
				&\quad\,-  \frac{\delta}{2}\sum_{i=1}^{2} \mathbb{E}\int_{(0,T)\times G_{i,d}} \rho^{-2} |\psi^i|^2 \,dx \,dt.
			\end{aligned}
		\end{align}
		Combining \eqref{inneq1} and \eqref{inneq2} and choosing $\delta=\frac{1}{2C}$ (with $C$ is the same constant as in \eqref{inneq1}), we get
		\begin{align*}
			J_{\varepsilon}(\phi_T,\phi_{\Gamma,T})\geq&\,\frac{1}{4}\left( \mathbb{E}\int_{Q_0}  \phi^2 \, \,dx\,dt+\mathbb{E}\int_{Q}\Phi^2\,dx\,dt+\mathbb{E}\int_{\Sigma}\widehat{\Phi}^2 \,d\sigma\,dt\right) +\varepsilon\mathbb{E}|(\phi_T,\phi_{\Gamma,T})|_{\mathbb{L}^2} \\
			&- C\left( \mathbb{E}|(y_0,y_{\Gamma,0})|^2_{\mathbb{L}^2}  
			+ \sum_{i=1}^{2}\alpha^2_i \mathbb{E}\int_{(0,T)\times G_{i,d}} \rho^2|y_{i,d}|^2 dx \,dt\right).
		\end{align*}
		Consequently, $J_{\varepsilon}$ is coercive in $L^2_{\mathcal{F}_T}(\Omega;\mathbb{L}^2)$, and thus $J_{\varepsilon}$ admits a unique minimum $(\phi^\varepsilon_T,\phi^\varepsilon_{\Gamma,T})$. If $(\phi^\varepsilon_T,\phi^\varepsilon_{\Gamma,T})\neq(0,0)$, we  have that
		\begin{equation}
			\langle J'(\phi^\varepsilon_T,\phi^\varepsilon_{\Gamma,T}), (\phi_T,\phi_{\Gamma,T})\rangle_{L^2_{\mathcal{F}_T}(\Omega;\mathbb{L}^2)} =0,\quad \,\,\forall(\phi_T,\phi_{\Gamma,T})\in L^2_{\mathcal{F}_T}(\Omega;\mathbb{L}^2).
		\end{equation}
		Then, for all $(\phi_T,\phi_{\Gamma,T})\in L^2_{\mathcal{F}_T}(\Omega;\mathbb{L}^2)$, we obtain that
		\begin{align}\label{Eq51NS}
			\begin{aligned}
				&\,\mathbb{E}\int_{Q_0}\phi^{\varepsilon}\,\phi \,dx\,dt +\mathbb{E}\int_{Q}\Phi^{\varepsilon}\,\Phi \,dx\,dt+\mathbb{E}\int_{\Sigma}\widehat{\Phi^{\varepsilon}}\,\widehat{\Phi }\,d\sigma\, dt+\varepsilon\mathbb{E}\left\langle\frac{(\phi^\varepsilon_T,\phi^\varepsilon_{\Gamma,T})}{|(\phi^\varepsilon_T,\phi^\varepsilon_{\Gamma,T})|_{L^2_{\mathcal{F}_T}(\Omega;\mathbb{L}^2)}},(\phi_T,\phi_{\Gamma,T})\right\rangle_{\mathbb{L}^2}\\
				&+\mathbb{E}\left\langle(y_0,y_{\Gamma,0}),(\phi(0),\phi_{\Gamma}(0))\right\rangle_{\mathbb{L}^2}+ \sum_{i=1}^{2}\alpha_i \mathbb{E}\int_{{(0,T)\times G_{i,d}}} y_{i,d} \psi^i dx \,dt=0.
			\end{aligned}
		\end{align}
		Taking controls $(u^{\varepsilon}_{1},u^{\varepsilon}_{2},u^{\varepsilon}_{3})=(\phi_{\varepsilon},\Phi_{\varepsilon},\widehat{\Phi_{\varepsilon}})$ in \eqref{dual}, and combining the resulting equality with \eqref{Eq51NS}, we find that
		\begin{equation*}
			\varepsilon\mathbb{E}\left\langle\frac{(\phi^\varepsilon_T,\phi^\varepsilon_{\Gamma,T})}{|(\phi^\varepsilon_T,\phi^\varepsilon_{\Gamma,T})|_{L^2_{\mathcal{F}_T}(\Omega;\mathbb{L}^2)}},(\phi_T,\phi_{\Gamma,T})\right\rangle_{\mathbb{L}^2}+\mathbb{E}\left\langle(y_\varepsilon(T),y_{\Gamma,\varepsilon}(T)),(\phi_T,\phi_{\Gamma,T})\right\rangle_{\mathbb{L}^2}=0,
		\end{equation*}
		for all $(\phi_T,\phi_{\Gamma,T})\in L^2_{\mathcal{F}_T}(\Omega;\mathbb{L}^2)$. Hence 
		\begin{equation}\label{eq53NS}
			\mathbb{E}\left|(y_\varepsilon(T),y_{\Gamma,\varepsilon}(T))\right|_{\mathbb{L}^2}\leq \varepsilon.
		\end{equation}
		Choosing $(\phi,\Phi,\widehat{\Phi})=(\phi_{\varepsilon},\Phi_{\varepsilon},\widehat{\Phi_{\varepsilon}})$ in \eqref{Eq51NS} and using the observability inequality \eqref{observaineq} together with  Young inequality, we deduce that
		\begin{align}\label{eq54NS}
			\begin{aligned}
				&\,|u^\varepsilon_{1}|^2_{ L^2_\mathcal{F}(0,T;L^2(G_0))}+|u^\varepsilon_{2}|^2_{L^2_\mathcal{F}(0,T;L^2(G))}+|u^\varepsilon_{3}|^2_{L^2_\mathcal{F}(0,T;L^2(\Gamma))}\\
				&\leq C\left(\mathbb{E}|(y_0,y_{\Gamma,0})|^2_{\mathbb{L}^2}+ \sum_{i=1}^{2}\alpha^2_i \mathbb{E}\int_{(0,T)\times G_{i,d}} \rho^2 y^2_{i,d} \,dx \,dt\right).
			\end{aligned}
		\end{align}
		If $(\phi^\varepsilon_T,\phi^\varepsilon_{\Gamma,T})=(0,0)$, we obtain that
		\begin{equation}\label{Ezu.1}
			\lim\limits_{t\rightarrow 0^{+}}\frac{J_{\varepsilon}(t \,(\phi_T,\phi_{\Gamma,T}))}{t}\geq0, \qquad \forall(\phi_T,\phi_{\Gamma,T}) \in L^2_{\mathcal{F}_T}(\Omega;\mathbb{L}^2).
		\end{equation}
		Using \eqref{Ezu.1} and take $(u^{\varepsilon}_1,u^{\varepsilon}_2,u^{\varepsilon}_3) = (0,0,0)$, the inequalities \eqref{eq53NS} and \eqref{eq54NS} hold. 
		
		By \eqref{eq54NS}, there exist a subsequence (denoted also by $(u^{\varepsilon}_1,u^{\varepsilon}_2, u^{\varepsilon}_3)$) of $(u^{\varepsilon}_1,u^{\varepsilon}_2, u^{\varepsilon}_3)$ such that as $\varepsilon\rightarrow0$
		\begin{align}\label{weakconvr}
			\begin{aligned}
				u^{\varepsilon}_1\longrightarrow  u_1\quad  \text{weakly in} \,\,\; L^2((0,T)\times\Omega;L^2(G_0));&\\
				u^{\varepsilon}_2\longrightarrow  u_2\quad  \text{weakly in} \,\,\; L^2((0,T)\times\Omega;L^2(G));&\\
				u^{\varepsilon}_3\longrightarrow  u_3\quad  \text{weakly in} \,\,\; L^2((0,T)\times\Omega;L^2(\Gamma)).
			\end{aligned}
		\end{align}
		By \eqref{weakconvr} and energy estimate, it is easy to see that
		\begin{equation}\label{Eq56}
			(y_\varepsilon(T),y_{\Gamma,\varepsilon}(T))\longrightarrow  (y(T),y_{\Gamma}(T))\quad  \text{weakly in} \;\,\,  L^2_{\mathcal{F}_T}(\Omega;\mathbb{L}^2),\quad \textnormal{as}\; \varepsilon\rightarrow0.
		\end{equation}
		Combining \eqref{eq53NS} and \eqref{Eq56}, we finally deduce that $$(y(T,\cdot),y_\Gamma(T,\cdot))=(0,0)\;\;\textnormal{in}\;\;G\times\Gamma,\quad\mathbb{P}\textnormal{-a.s.}$$
		This concludes the proof of Proposition \ref{Lm5.5} and then establishes our null controllability result as outlined in Theorem \ref{th4.1SN}.
	\end{proof} 
	\section{Conclusion and discussion}\label{section7}
	In this paper, we applied the Stackelberg and Nash strategies to a class of forward stochastic parabolic equations with dynamic boundary conditions. To solve the problem, we combined game theory concepts with some well-known controllability techniques. After characterizing the Nash equilibrium, the original problem was transformed into the null controllability issue for a coupled stochastic forward-backward system, which was solved using the Carleman estimates approach.
	
	Based on the results obtained in this paper, some points are in order:
	\begin{itemize}
		\item To simplify the exposition of our work, we have used the same white noise on both the bulk and boundary equations. However, the results also hold with different noises $W^1(\cdot)$ within the domain $G$ and $W^2(\cdot)$ along the boundary $\Gamma$, where $(W^1(\cdot),W^2(\cdot))$ is a two-dimensional Brownian motion. 
		\item The introduction of three leaders represents a technical constraint. This is a classical issue when dealing with the controllability of forward stochastic parabolic equations. However, we can handle the backward system with only one leader $u$ and two followers $v_1$ and $v_2$. More precisely, using the same ideas as presented in this paper, we can treat Stackelberg-Nash null controllability of the following system
		\begin{equation*}
			{\qquad\qquad\begin{cases}
					\begin{array}{ll}
						dy +\Delta y \,dt = \left[a_1y+a_2Y+\chi_{G_0}(x)u+\chi_{G_1}(x)v_1+\chi_{G_2}(x)v_2\right] \,dt + Y\,dW(t)&\textnormal{in}\,\,Q,\\
						dy_\Gamma+\Delta_\Gamma y_\Gamma \,dt-\partial_\nu y \,dt = \left[b_1y_\Gamma+b_2\widehat{Y}\right]\,dt+\widehat{Y} \,dW(t) &\textnormal{on}\,\,\Sigma,\\
						y_\Gamma(t,x)=y\vert_\Gamma(t,x) &\textnormal{on}\,\,\Sigma,\\
						(y,y_\Gamma)\vert_{t=T}=(y_T,y_{\Gamma,T}) &\textnormal{in}\,\,G\times\Gamma.
					\end{array}
			\end{cases}}
		\end{equation*}
		\item In the case of the forward equation \eqref{eqq1.1} with spatially independent coefficients, we can employ the spectral method to derive the desired observability inequality \eqref{observaineq} for the adjoint equation. References \cite{elgmanforwione,lu2011some,observineqback} offer insights into the classical null controllability result. In this case, it is interesting to investigate the Stackelberg-Nash null controllability of \eqref{eqq1.1} with only one leader.
		\item Studying the Stackelberg-Nash controllability of general classes of forward stochastic parabolic equations, incorporating dynamic boundary conditions alongside first-order and variable-coefficient second-order terms, as well as the semilinear case, constitutes two important questions. We refer to \cite{Preprintelgrou23,BackSPEwithDBC} for the controllability of some general stochastic parabolic equations. See also \cite{san23,ZhangGuo} for the controllability results of a class of semilinear stochastic parabolic equations.
		\item It would be quite interesting to study the Stackelberg-Nash controllability  of equation \eqref{eqq1.1} when we reverse the roles of the leaders and the followers. For established results in the deterministic case, we refer to \cite{BoMaOuNash2}.
		
	\end{itemize}

\end{document}